\newtheorem{definition}{Definition}
\newtheorem{proposition}{Proposition}
\newtheorem{lemma}{Lemma}
\newtheorem{theorem}{Theorem}
\newtheorem{corollary}{Corollary}
\newtheorem{assumption}{Assumption}
\newcommand{\qedsymbol}{$\square$}
\newcommand{\qed}{\hfill\qedsymbol}
\newenvironment{proof}[1][Proof.]{\noindent \textbf{#1\hskip.5em\relax}}{\qed\vskip\baselineskip}
\newenvironment{proof*}[1][Proof.]{\noindent \textbf{#1\hskip.5em\relax}}{}
\newcommand\C[1]{\mathcal{#1}}
\newcommand\B[1]{\bm{#1}}
\newcommand\BC[1]{\bm{\mathcal{#1}}}
\newcommand{\DMAG}{\mathrm{DMAG}}
\newcommand{\IM}{\mathrm{IM}}
\newcommand{\PAGFCI}{\C{P}_{\alg{FCI}}}
\newcommand{\PAGFCIJCI}{\C{P}_{\alg{FCI}_{\alg{JCI}}}}
\newcommand{\alg}[1]{\texttt{#1}}
\newcommand\acy{\mathrm{acy}}
\newcommand{\Prb}{\mathbb{P}}
\newcommand\given{\,|\,}
\DeclareMathOperator*{\CI}{{\,\perp\mkern-12mu\perp\,}}
\DeclareMathOperator*{\SEP}{\perp}
\DeclareMathOperator*{\nSEP}{\not\perp}
\newcommand\indep[4]{{#1} \CI_{#4} {#2} \given {#3}}
\newcommand{\dsep}[4]{{#1} \SEP_{#4}^d {#2} \given {#3}}
\newcommand{\sigmasep}[4]{{#1} \SEP_{#4}^\sigma {#2} \given {#3}}
\newcommand{\sigmacon}[4]{{#1} \nSEP_{#4}^\sigma {#2} \given {#3}}
\newcommand\mathbfsc[1]{\text{\normalfont\scshape#1}}
\newcommand\ansub[2]{\mathbfsc{an}_{#1}(#2)}
\newcommand\desub[2]{\mathbfsc{de}_{#1}(#2)}
\newcommand\sccsub[2]{\mathbfsc{sc}_{#1}(#2)}
\newcommand\pasub[2]{\mathbfsc{pa}_{#1}(#2)}
\newcommand\eref[1]{(\ref{#1})}
\newcommand{\ot}{\mathrel{\leftarrow}}
\newcommand{\oto}{\mathrel{\leftrightarrow}}
\newcommand{\ots}{\mathrel{{\leftarrow\mkern-11mu\ast}}}
\newcommand{\otc}{\mathrel{{\leftarrow\mkern-8mu\circ}}}
\newcommand{\sto}{\mathrel{{\ast\mkern-11mu\to}}}
\newcommand{\sts}{\mathrel{{\ast\mkern-11mu\relbar\mkern-9mu\relbar\mkern-11mu\ast}}}
\newcommand{\ctc}{\mathrel{{\circ\mkern-8mu\relbar\mkern-8mu\circ}}}
\newcommand{\cto}{\mathrel{{\circ\mkern-8mu\rightarrow}}}
\newcommand{\cts}{\mathrel{{\circ\mkern-8mu\relbar\mkern-9mu\relbar\mkern-11mu\ast}}}
\newcommand{\ctt}{\mathrel{{\circ\mkern-8mu\relbar\mkern-9mu\relbar}}}
\newcommand{\ttt}{\mathrel{{\relbar\mkern-9mu\relbar}}}
\newcommand{\ttc}{\mathrel{{\relbar\mkern-9mu\relbar\mkern-8mu\circ}}}
\newcommand{\typo}[2]{{\color{red}\sout{#1}}{\color{blue}{#2}}}
\newcommand{\typoadd}[1]{{\color{blue}{#1}}}
\newcommand{\typorm}[1]{{\color{red}\sout{#1}}}
\tikzstyle{var}=[circle,draw=black,fill=white,thick,minimum size=20pt,inner sep=0pt]
\tikzstyle{varh}=[circle,draw=gray,fill=white,thick,minimum size=20pt,inner sep=0pt,dashed]
\tikzstyle{arr}=[->,>=stealth',draw=black,thick]
\tikzstyle{arrh}=[->,>=stealth',draw=gray,fill=gray,thick,dashed]
\tikzstyle{biarr}=[<->,>=stealth',draw=black,fill=black,thick]
\tikzstyle{biarrh}=[<->,>=stealth',draw=gray,fill=gray,thick,dashed]
\tikzstyle{noarr}=[draw=black,fill=black,thick]
\tikzstyle{noarrh}=[draw=gray,fill=gray,thick,dashed]
\tikzstyle{fac}=[rectangle,draw=black!50,fill=black!20,thick,minimum size=10pt] 
\tikzstyle{varc}=[rectangle,draw=black,fill=white,thick,minimum size=20pt,inner sep=0pt]
\tikzstyle{varch}=[rectangle,draw=black,fill=white,thick,minimum size=20pt,inner sep=0pt,dashed]
\tikzstyle{carr}=[o->,>=stealth',draw=black,thick]
\tikzstyle{bicarr}=[o-o,>=stealth',draw=black,thick]
\tikzstyle{carc}=[o-o,>=stealth',draw=black,thick]
\title{Constraint-Based Causal Discovery using Partial Ancestral Graphs in the presence of Cycles}
\author{ {\bf Joris M.~Mooij} \\
Korteweg-de Vries Institute\\
University of Amsterdam\\
Amsterdam, The Netherlands\\
\And
{\bf Tom Claassen}  \\
Institute for Computing and Information Sciences\\
Radboud University Nijmegen\\
Nijmegen, The Netherlands
}
\begin{document}

\maketitle

\begin{abstract}
While feedback loops are known to play important roles in many complex systems, their existence is ignored in a large part of the causal discovery literature, as systems are typically assumed to be acyclic from the outset. When applying causal discovery algorithms designed for the acyclic setting on data generated by a system that involves feedback, one would not expect to obtain correct results. In this work, we show that---surprisingly---the output of the Fast Causal Inference (FCI) algorithm is correct if it is applied to observational data generated by a system that involves feedback. More specifically, we prove that for observational data generated by a simple and $\sigma$-faithful Structural Causal Model (SCM), FCI is sound and complete, and can be used to consistently estimate (i) the presence and absence of causal relations, (ii) the presence and absence of direct causal relations, (iii) the absence of confounders, and (iv) the absence of specific cycles in the causal graph of the SCM. We extend these results to constraint-based causal discovery algorithms that exploit certain forms of background knowledge, including the causally sufficient setting (e.g., the PC algorithm) and the Joint Causal Inference setting (e.g., the FCI-JCI algorithm).
\end{abstract}
\typoadd{This version fixes several typos in the published version.}

\section{INTRODUCTION}

Causal discovery, i.e., establishing the presence or absence of causal relationships between observed variables, is an important activity in
many scientific disciplines. Typical approaches to causal discovery from observational data are either score-based, or constraint-based (or a combination of the two). The more generally applicable constraint-based approach, which we focus on in this work, is based on exploiting 
information in 
conditional independences in the observed data to draw conclusions about the possible underlying causal structure. 

Although many systems of interest in various application domains involve feedback loops or other types of cyclic causal relationships (for example, in economical, biological, chemical, physical, control and climatological systems),
most of the existing literature on causal discovery from observational data ignores this and assumes from the outset that the
underlying causal system is acyclic. Nonetheless, several algorithms have been developed specifically for the
cyclic setting. For example, quite some work has been done for linear systems \citep[e.g.,][]{RiS99,LSR08,HEH10,HEH12,Rothenhausler_15}. 

More generally applicable are causal discovery algorithms that exploit conditional independence constraints,
without assuming certain restrictions on the parameterizations of the causal models (such as linearity).
Pioneering work in this area was done by \citet{RichardsonPhD1996}, resulting in the CCD algorithm,
the first constraint-based causal discovery algorithm shown to be applicable in a cyclic setting
\citep[see also ][]{Richardson96,RiS99}. It
was shown to be sound under the assumptions of causal sufficiency, the $d$-separation Markov property, 
and $d$-faithfulness. More recently, other algorithms that are sound under these assumptions
(except for the requirement of causal sufficiency) were proposed \citep{HEJ14,Strobl2018}.

However, it was already noted by \citet{Spirtes94,Spirtes95} that the $d$-separation Markov
property assumption can be too strong in general, and he proposed an alternative criterion, making use of the so-called
``collapsed graph'' construction. More recently, an alternative formulation in terms of the \emph{$\sigma$-separation}
criterion was introduced, and the corresponding Markov property was shown to hold in a very general setting 
\citep{ForreMooij_1710.08775}.
Whereas the Markov property based on $\sigma$-separation applies under mild assumptions, the stronger Markov
property based on $d$-separation is limited to more specific settings (e.g., continuous variables with linear relations, or
discrete variables, or the acyclic case) \citep{ForreMooij_1710.08775}.  
As discussed in \citep{ForreMooij_1710.08775,Bongers++_1611.06221v3}, the 
$\sigma$-separation Markov property 
seems appropriate for a wide class of cyclic structural causal models with non-linear functional relationships
between non-discrete variables, for example structural causal models corresponding to the equilibrium states of 
dynamical systems governed by random differential equations \citep{BongersMooij_1803.08784}.

Apart from a Markov property, constraint-based causal discovery algorithms need to make some type of faithfulness assumption.
A natural extension of the common faithfulness assumption used in the acyclic setting is obtained by
replacing $d$-separation by $\sigma$-separation, that we refer to as \emph{$\sigma$-faithfulness}. 
\citet{ForreMooij_UAI_18} proposed a constraint-based causal discovery algorithm that is sound and
complete, assuming the $\sigma$-separation Markov property in combination with the $\sigma$-faithfulness assumption.
However, their algorithm is limited in practice to about 5--7 variables because of the
combinatorial explosion in the number of possible causal graphs with increasing number of variables.
Interestingly, under the additional assumption of causal sufficiency,
the CCD algorithm is also sound under these assumptions \citep[as already noted in Section 4.5 of ][]{RichardsonPhD1996}.
Other causal discovery algorithms (LCD \citep{Cooper1997}, ICP \citep{ICP2016} and Y-structures \citep{ManiPhD2006}), 
all originally designed for the acyclic setting, have been shown to be sound also in the 
$\sigma$-separation setting \citep{Mooij++_JMLR_2020}.
The most general scenario (under the additional assumption of causal sufficiency, however) is addressed by the NL-CCD algorithm 
\citep[Chapter 4 in ][]{RichardsonPhD1996}, which was shown to be sound under the assumptions of the $\sigma$-separation 
Markov property together with the (weaker) $d$-faithfulness assumption.

One of the classic algorithms for constraint-based causal discovery is the Fast Causal Inference (FCI) algorithm \citep{SMR1995,SMR1999,Zhang2008_AI}. It was designed for the acyclic case, assuming the $d$-separation Markov property in combination with the $d$-faithfulness assumption.
Recently, it was observed that when run on data generated by cyclic causal models, the accuracy of FCI is actually comparable to its accuracy in the strictly
acyclic setting \citep[Figures 25, 26, 29, 31, 32 in][]{Mooij++_JMLR_2020}. This is surprising, as it is commonly believed that the application domain of FCI is limited to acyclic causal systems, and one would expect such serious model misspecification to result in glaringly incorrect results.

In this work, we show that when FCI is applied on data from a cyclic causal system that satisfies the $\sigma$-separation
Markov property and is $\sigma$-faithful, its output is still sound and complete. 
Furthermore, we derive criteria for how to read off various features from the partial ancestral graph output by FCI
(specifically, the absence or presence of ancestral relations, direct relations, cyclic relations and confounders).
This provides a practical causal discovery algorithm for that setting that is able
to handle hundreds or even thousands of variables as long as the underlying causal model is sparse enough, and that is
also applicable in the presence of latent confounders. 
It thus forms a significant improvement over the previous state-of-the-art in causal discovery for the $\sigma$-separation setting.  

The results we derive
in this work are not limited to FCI, but apply to any constraint-based causal discovery algorithm that solves the same task
as FCI does, i.e., that estimates the directed partial ancestral graph from conditional independences in the data,
e.g., FCI+ \citep{ClaassenMooijHeskes_UAI_13} and CFCI \citep{Colombo++2012}.
Our results therefore make constraint-based causal discovery in the presence of cycles as practical as it is in the acyclic case, without requiring any modifications of the algorithms.
Our work also provides the first characterization of the $\sigma$-Markov equivalence class of directed mixed graphs. 
We extend our results to variants of algorithms that exploit certain background knowledge,
for example, causal sufficiency \citep[e.g., the PC algorithm,][]{SGS2000} or the Joint Causal Inference framework 
\citep[e.g., the FCI-JCI algorithm, ][]{Mooij++_JMLR_2020}.
For simplicity, we assume no selection bias in this work, but we expect that our results can be extended to allow for that as well.

\section{PRELIMINARIES}

In Section~\ref{app:preliminaries} (Supplementary Material), we introduce our notation and terminology and provide the reader with a summary of the necessary definitions and results from the graphical causal modeling and discovery literature.
For more details, we refer the reader to the literature
\citep{Pearl2009,SGS2000,RichardsonSpirtes02,Zhang2006,Zhang2008_AI,Zhang2008_JMLR,Bongers++_1611.06221v3,ForreMooij_1710.08775}.
Here, we only give a short high-level overview of the key notions.

There exists a variety of graphical representations of causal models. Most popular are \emph{directed acyclic graphs (DAGs)}, 
presumably because of their simplicity. DAGs are appropriate under the assumptions of causal 
sufficiency (i.e., there are no latent common causes of the observed variables), acyclicity (absence of feedback loops) and 
no selection bias (i.e., there is no implicit conditioning on a common effect of the observed variables). DAGs have many
convenient properties, amongst which a Markov property (which has different equivalent formulations, the most prominent one 
being in terms of the notion of $d$-separation)
and a simple causal interpretation. A more general class of graphs are \emph{acyclic directed mixed graphs (ADMGs)}. 
These make use of
additional bidirected edges to represent latent confounding, and have a similarly convenient Markov property (sometimes
referred to as $m$-separation) and causal
interpretation. When also dropping the assumption of acyclicity (thereby allowing for feedback), one can make use of the more
general class of \emph{directed mixed graphs (DMGs)}. 
These graphs can be naturally associated with (possibly cyclic) structural causal
models (SCMs) and can represent feedback loops. 
The corresponding Markov properties and causal interpretation are more subtle \citep{Bongers++_1611.06221v3} than
in the acyclic case. Cyclic SCMs can be used, e.g., to describe the causal semantics of the equilibrium states of 
dynamical systems governed by random differential equations \citep{BongersMooij_1803.08784}.

In this work, we will restrict ourselves to the subclass of \emph{simple} SCMs, i.e., those SCMs for
which any subset of the structural equations has a unique solution for the corresponding endogenous variables in terms of the
other variables appearing in these equations. 
Simple SCMs admit (sufficiently weak) cyclic interactions but retain many of the 
convenient properties of acyclic SCMs \citep{Bongers++_1611.06221v3}.
 They are a special case of modular SCMs \citep{ForreMooij_1710.08775}.
In particular, they satisfy the $\sigma$-separation Markov property and their graphs have an intuitive causal interpretation.\footnote{The $\sigma$-separation criterion is very similar to the $d$-separation criterion, with the only difference
being that $\sigma$-separation has as an additional condition for a non-collider to block a path that it has to point to a node in a 
different strongly connected component. Two nodes in a DMG are said to be in the same strongly connected component 
if and only if they are both ancestor of each other.}

For acyclic constraint-based causal discovery, ADMGs provide a more fine-grained representation than necessary, because one can only 
recover the
Markov equivalence class of ADMGs from conditional independences in observational data. A less expressive class of graphs, \emph{maximal ancestral graphs (MAGs)}, was introduced by \citet{RichardsonSpirtes02}. Each ADMG induces a MAG and each MAG represents a set of ADMGs. The mapping from ADMG to MAG preserves the $d$-separations and the (non-)ancestral relations. 
Contrary to ADMGs, MAGs have at most a single edge connecting any pair of distinct variables.
One of the key properties that distinguishes MAGs from ADMGs is that Markov-equivalent MAGs have the same adjacencies.
In addition to being able to handle latent variables, MAGs can also represent implicit conditioning on a subset of the
variables, making use of undirected edges. Therefore, they can be used to represent both latent variables and selection bias.

It is often convenient when performing causal reasoning or discovery to be able to represent a set of hypothetical MAGs in a compact way. For these reasons, \emph{partial ancestral graphs (PAGs)} were introduced \citep{Zhang2006}.\footnote{PAGs
were originally introduced by \citet{RichardsonPhD1996} in order to represent the output of the CCD algorithm. It was conjectured by Richardson that PAGs could also be used to represent the output of the FCI algorithm, which was originally formulated in terms of Partially Oriented Inducing Path Graphs (POIPGs). This conjecture was proved subsequently by Spirtes. \citet[p.\ 102,][]{RichardsonPhD1996} notes: ``It is an open question whether or not the set of symbols is sufficiently rich to allow us to represent the
class of cyclic graphs with latent variables.'' In the present work we turned full circle by reinterpreting PAGs
as representing properties of DMGs, and have thereby answered this question affirmatively.}
The usual way to think about a PAG is as an object that represents a set of MAGs. 
The (Augmented) Fast Causal Inference (FCI) algorithm \citep{SMR1995,SMR1999,Zhang2008_AI} takes as input the conditional independences that hold in the data (assumed to be $d$-Markov and $d$-faithful w.r.t.\ a ``true'' ADMG), and outputs a PAG. 
As shown in seminal work  \citep{SMR1995,SMR1999,Ali++2005,Zhang2008_AI}, the FCI algorithm is sound and complete, and the PAG output by FCI represents the Markov equivalence class of the true ADMG. 

In this work, we will for simplicity assume no selection bias.
This means that we can restrict ourselves to MAGs without undirected edges, which we refer to as \emph{directed MAGs (DMAGs)}, and PAGs without undirected or circle-tail edges, which we refer to as \emph{directed PAGs} (DPAGs).

\begin{figure}\centering
\scalebox{0.7}{\begin{tikzpicture}
  \node (SCM) at (-3,3) {SCM};
  \node (DMG) at (-2,2) {DMG};
  \node (ADMG1) at (0.5,1)  {ADMG 1};
  \node (ADMG2) at (0.5,0)  {ADMG 2};
  \node (ADMG3) at (0.5,-1) {ADMG $n$};
  \node (DMAG1) at (3,1)  {DMAG 1};
  \node (DMAG2) at (3,0)  {DMAG 2};
  \node (DMAG3) at (3,-1) {DMAG $n$};
  \node (IM)    at (6,0)  {IM};
  \node (CDPAG) at (6,2)  {DPAG};
  \node   at (1,-0.5) {\dots};
  \node   at (3,-0.5) {\dots};
  \draw[arr] (SCM) to (DMG);
  \draw[arr,bend left] (DMG) to node [anchor=east,yshift=-2mm] {$\sigma$-separation} (IM);
  \draw[arr,bend right] (DMG) to (ADMG1);
  \draw[arr,bend right] (DMG) to (ADMG2);
  \draw[arr,bend right] (DMG) to node [anchor=east] {acyclifications} (ADMG3);
  \draw[arr] (ADMG1) to (DMAG1);
  \draw[arr] (ADMG2) to (DMAG2);
  \draw[arr] (ADMG3) to (DMAG3);
  \draw[arr] (DMAG1) to (IM);
  \draw[arr] (DMAG2) to (IM);
  \draw[arr] (DMAG3) to node [anchor=north,xshift=5mm,yshift=-2mm] {$d$-separation} (IM);
  \draw[arr] (IM) to node [anchor=west] {FCI} (CDPAG);
  \draw[arr,bend right] (CDPAG) to node [anchor=north] {contains} (DMG);
\end{tikzpicture}}
\caption{Relations between various representations.\label{fig:different_graphs}}
\end{figure}
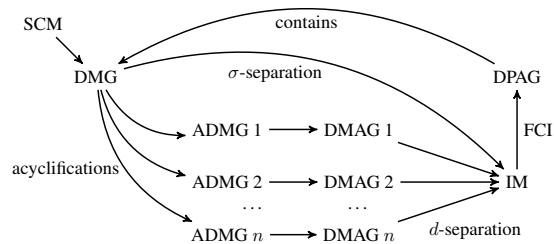

\section{EXTENSIONS TO THE CYCLIC SETTING}

The theory of MAGs and PAGs is rather intricate. A natural question is how this theory can be extended
when the assumption of acyclicity is dropped. This does not seem to be straightforward at first sight. 
An obvious approach would be to generalize the notion of MAGs by adding edge types that represent cycles. 
However, it would probably require a lot of effort to rederive and reformulate the known results about MAGs and PAGs in this more general setting.
In this work, we take another approach: we represent a (possibly cyclic) DMG directly by a DPAG.
In order to make this idea precise, we first need to extend the notion of inducing path to the cyclic setting.
Our strategy is illustrated in Figure~\ref{fig:different_graphs}.

\subsection{INDUCING PATHS}

We propose the following generalization of the notion of inducing path (Definition~\ref{def:inducing_path_acyclic}) to the $\sigma$-separation setting:
\begin{definition}\label{def:inducing_path}
Let $\C{G} = \langle \C{V}, \C{E}, \C{F} \rangle$ be directed mixed graph (DMG). 
An \emph{inducing path (walk) between two nodes $i,j \in \C{V}$} is a path (walk) in $\C{G}$ between $i$ and $j$ on which every 
  collider is in $\ansub{\C{G}}{\{i,j\}}$, and each \typoadd{non-endpoint} non-collider on the path (walk)\typorm{, except $i$ and  $j$,} only has outgoing directed edges to neighboring nodes on the path (walk) that lie in the same strongly connected component of $\C{G}$.
\end{definition}
This is indeed the proper generalization, since it has the following property.
\begin{proposition}
  Let $\C{G} = \langle \C{V},\C{E},\C{F} \rangle$ be a DMG and $i,j$ two distinct vertices in $\C{G}$. Then the following are equivalent:
  \begin{compactenum}[(i)]
    \item There is an inducing path in $\C{G}$ between $i$ and $j$;
    \item There is an inducing walk in $\C{G}$ between $i$ and $j$;
    \item $\sigmacon{i}{j}{Z}{\C{G}}$ for all $Z \subseteq \C{V} \setminus \{i,j\}$.
  \end{compactenum}
\end{proposition}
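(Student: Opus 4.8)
I would prove the three statements mutually equivalent via the implications (i)$\Rightarrow$(ii)$\Rightarrow$(iii) and (iii)$\Rightarrow$(ii)$\Rightarrow$(i), working throughout with the strongly connected components of $\C{G}$. The implication (i)$\Rightarrow$(ii) is immediate, as every path is a walk. For (ii)$\Rightarrow$(iii), fix $Z \subseteq \C{V}\setminus\{i,j\}$ and an inducing walk $\pi$, and transform it into a $\sigma$-open walk given $Z$. The key observation is that a non-collider on an inducing walk can never $\sigma$-block it, regardless of $Z$: its directed out-edges on the walk point only into its own strongly connected component, so the extra condition needed for a non-collider to block under $\sigma$-separation is never met. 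Hence the only obstructions are colliders $c \notin \ansub{\C{G}}{Z}$; for such a $c$ the inducing property gives $c \in \ansub{\C{G}}{\{i,j\}}$, so there is a directed path $\rho$ from $c$ to $i$ or to $j$, and $\rho$ avoids $Z$ (otherwise $c$ would lie in $\ansub{\C{G}}{Z}$). Splicing $\rho$ into the walk in place of the sub-walk from $c$ to that endpoint turns $c$ into a non-collider (harmless, since $c \notin Z$) and creates no new colliders. Iterating, with the invariant that every collider of the current walk lies in $\ansub{\C{G}}{\{i,j\}}$ and every non-collider either lies outside $Z$ or points only within its own strongly connected component, yields after finitely many steps a $\sigma$-open walk given $Z$.

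For (iii)$\Rightarrow$(ii), I apply (iii) to the specific set $Z_\ast := \ansub{\C{G}}{\{i,j\}}\setminus\{i,j\}$ and take a $\sigma$-open walk $w$ between $i$ and $j$ given $Z_\ast$ of minimal length; minimality forces $i$ and $j$ to occur on $w$ only as its endpoints. Then every collider $c$ of $w$ satisfies $c \in \ansub{\C{G}}{Z_\ast} \subseteq \ansub{\C{G}}{\{i,j\}}$, so the collider condition for an inducing walk holds. For the non-collider condition, suppose some non-collider $v\neq i,j$ of $w$ had a directed out-edge on $w$ to a vertex outside $\sccsub{\C{G}}{v}$; since $w$ is $\sigma$-open this can only happen if $v\notin Z_\ast$, i.e.\ $v\notin\ansub{\C{G}}{\{i,j\}}$. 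But tracing directed edges along $w$ away from $v$ in the direction of that out-edge, one reaches either an endpoint of $w$ or the first collider of $w$ encountered, in both cases forcing $v\in\ansub{\C{G}}{\{i,j\}}$ -- a contradiction. Hence $w$ is an inducing walk.

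It remains to prove (ii)$\Rightarrow$(i): take an inducing walk of minimal length and show it is a path. If a vertex repeats, say at a first repetition $v_k = v_l$, delete the loop $v_{k+1},\dots,v_l$. The resulting shorter walk is again inducing except possibly when the deletion would turn $v_k$ from a non-collider into a collider lying outside $\ansub{\C{G}}{\{i,j\}}$; I claim this case cannot occur. In it the inducing property at both occurrences gives directed out-edges from $v_k$ to $v_{k+1}\in\sccsub{\C{G}}{v_k}$ and to $v_{l-1}\in\sccsub{\C{G}}{v_k}$, and $v_k\notin\ansub{\C{G}}{\{i,j\}}$, so every vertex of the loop that lies in $\sccsub{\C{G}}{v_k}$ also lies outside $\ansub{\C{G}}{\{i,j\}}$. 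Tracing backwards from $v_{l-1}$, each such vertex cannot be a collider (a collider of an inducing walk would lie in $\ansub{\C{G}}{\{i,j\}}$), hence is a non-collider whose directed out-edge points one step further back and again into $\sccsub{\C{G}}{v_k}$; this deduction chain terminates at $v_{k+1}$, which then receives arrowheads on both sides, i.e.\ is a collider inside $\sccsub{\C{G}}{v_k}$ -- contradiction. So the minimal inducing walk is a path, completing the cycle.

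The easy parts are (i)$\Rightarrow$(ii) and the splicing in (ii)$\Rightarrow$(iii). I expect the main obstacle to be the passage from an inducing walk to an inducing path in (ii)$\Rightarrow$(i): one must check that loop deletion preserves \emph{all} of the inducing conditions, and the single dangerous configuration -- a non-collider turning into a collider outside $\ansub{\C{G}}{\{i,j\}}$ -- has to be excluded by the backward-tracing argument above, which hinges on the delicate interplay between the collider condition, the ``points only within its own strongly connected component'' condition for non-colliders, and the idempotence of taking ancestors.
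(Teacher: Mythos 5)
Your proposal is correct. The forward half coincides with the paper's argument: (i)$\Rightarrow$(ii) is trivial, and your (ii)$\Rightarrow$(iii) is the same splicing idea as the paper's (the paper phrases it as choosing a walk with the fewest colliders among those satisfying an invariant, you phrase it as an iteration that strictly decreases the collider count; these are interchangeable). Where you genuinely diverge is in closing the equivalence. The paper proves (iii)$\Rightarrow$(i) in one step: since $\sigma$-separation is defined by quantifying over \emph{paths}, $\sigma$-connectedness given $Z^*=\ansub{\C{G}}{\{i,j\}}\setminus\{i,j\}$ immediately hands you a $\sigma$-open \emph{path}, and the directed-subwalk argument (which you also use) shows it is inducing. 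You instead prove (iii)$\Rightarrow$(ii), obtaining only an inducing walk, and then supply a separate (ii)$\Rightarrow$(i) reduction by loop deletion, including the careful exclusion of the one dangerous configuration (a non-collider at both occurrences of a repeated vertex turning into a collider outside $\ansub{\C{G}}{\{i,j\}}$) via the backward trace through the strongly connected component. That case analysis is sound --- the trace correctly forces $v_{k+1}$ to be a collider inside $\sccsub{\C{G}}{v_k}$, contradicting the collider condition --- but it is extra work the paper avoids by leaning on the path-based definition of $\sigma$-connectedness. What your route buys is self-containedness: the equivalence (ii)$\Leftrightarrow$(i) of inducing walks and inducing paths is established as a purely graphical fact, independent of which (walk- or path-based) formulation of $\sigma$-separation one adopts, whereas the paper's proof implicitly relies on the walk and path versions of $\sigma$-connectedness agreeing (it produces a $\sigma$-open walk in (ii)$\Rightarrow$(iii) but consumes a $\sigma$-open path in (iii)$\Rightarrow$(i)). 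Your version makes that equivalence explicit rather than borrowed.
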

\begin{proof}
  The proof is similar to that of Theorem 4.2 in \citet{RichardsonSpirtes02}.

  (i) $\implies$ (ii) is trivial.

  (ii) $\implies$ (iii): Assume the existence of an inducing walk between $i$ and $j$ in $\C{G}$. 
  Let $Z \subseteq \C{V}\setminus\{i,j\}$. 
  Consider a walk $\mu$ in $\C{G}$ between $i$ and $j$ with the fewest number of colliders, with the property that all colliders on $\mu$ are in $\ansub{\C{G}}{\{i,j\} \cup Z}$, and each non-endpoint non-collider on $\mu$ is not in $Z$ or points only to nodes in the same strongly connected component of $\C{G}$. 
  Such a walk must exist (because the inducing walk satisfies that property). 
  We now show that all colliders on $\mu$ must be in $\ansub{\C{G}}{Z}$. 
  Suppose on the contrary the existence of a collider $k$ on $\mu$ that is not ancestor of $Z$. 
  It is either ancestor of $i$ or of $j$, by assumption.
  Without loss of generality, assume the latter.
  Then there is a directed path $\pi$ from $k$ to $j$ in $\C{G}$ that does not pass through any node of $Z$. 
  Then the subwalk of $\mu$ between $i$ and $k$ can be concatenated with the directed path $\pi$
  into a walk between $i$ and $j$ that has the property, but has fewer colliders than $\mu$: a contradiction. 
  Therefore, $\mu$ is $\sigma$-open given $Z$.
  Hence, $i$ and $j$ are $\sigma$-connected given $Z$.

  (iii) $\implies$ (i): Suppose that $i$ and $j$ are $\sigma$-connected given any $Z \subseteq \C{V} \setminus \{i,j\}$.
  In particular, $i$ and $j$ are $\sigma$-connected given $Z^* = \ansub{\C{G}}{\{i,j\}} \setminus \{i,j\}$. 
  Let $\pi$ be a path between $i$ and $j$ that is $\sigma$-open given $Z^*$.
  We show that $\pi$ must be an inducing path. 
  First, all colliders on $\pi$ are in $\ansub{\C{G}}{Z^*}$ and hence in $\ansub{\C{G}}{\{i,j\}}$. 
  Second, let $k$ be any non-endpoint non-collider $k$ on $\pi$.
  Then there must be a directed subwalk of $\pi$ starting at $k$ that ends either at the first collider on $\pi$ next to $k$ or at an end node of $\pi$, and hence $k$ must be in $Z^*$. 
  Since $\pi$ is $\sigma$-open given $Z^*$, $k$ can only point to nodes in the same strongly connected component of $\C{G}$.
  Hence, all non-endpoint non-colliders on $\pi$ can only point to nodes in the same strongly connected component of $\C{G}$.
\end{proof}
In words: there is an inducing walk (or path) between two nodes in a DMG if and only if the two nodes cannot be $\sigma$-separated by any subset of nodes that does not contain either of the two nodes.

\subsection{REPRESENTING DMGs BY DPAGs}

The following definition forms the key to our approach.
\begin{definition}\label{def:dpag_contains_dmg}
  Let $\C{P}$ be a DPAG and $\C{G}$ a DMG, both with vertex set $\C{V}$. 
  We say that \emph{$\C{P}$ contains $\C{G}$} if all of the following hold:
  \begin{compactenum}[(i)]
    \item two vertices $i,j$ are adjacent in $\C{P}$ if and only if there is an inducing path between $i,j$ in $\C{G}$;
    \item if $i \sto j$ in $\C{P}$ (i.e., $i \to j$ in $\C{P}$ or $i \cto j$ in $\C{P}$ or $i \oto j$ in $\C{P}$), then $j \notin \ansub{\C{G}}{i}$;
    \item if $i \to j$ in $\C{P}$ then $i \in \ansub{\C{G}}{j}$.
  \end{compactenum}
\end{definition}
It is only a slight variation on how PAGs are traditionally interpreted, and agrees with the traditional (acyclic)
interpretation when restricting the DMGs to be acyclic.

\subsection{ACYCLIFICATIONS}

Inspired by the ``collapsed graph'' construction of 
\citet{Spirtes94,Spirtes95}, \citet{ForreMooij_1710.08775} introduced a notion of \emph{acyclification} for a class of graphical causal models termed HEDGes, but the same concept can be defined for DMGs, which we will do here. 
\begin{definition}\label{def:acyclification}
  Given a DMG $\C{G} = \langle \C{V}, \C{E}, \C{F} \rangle$. An \emph{acyclification} of $\C{G}$ is an ADMG $\C{G}' = \langle \C{V}, \C{E}', \C{F}' \rangle$ with 
  \begin{compactenum}[(i)]
    \item the same nodes $\C{V}$;
    \item for any pair of nodes $\{i,j\}$ such that $i \not\in\sccsub{\C{G}}{j}$:
      \begin{compactenum}
         \item $i \to j \in \C{E}'$ iff there exists a node $j'$ such that $j' \in \sccsub{\C{G}}{j}$ and $i \to j' \in \C{E}$;
         \item $i \oto j \in \C{F}'$ iff there \typo{exists a node $k$ such that $k \in \sccsub{\C{G}}{j}$ and $i \oto k \in \C{F}$}{exist nodes $i',j'$ such that $i' \in \sccsub{\C{G}}{i}$, $j' \in \sccsub{\C{G}}{j}$ and $i' \oto j' \in \C{F}$};
      \end{compactenum}\label{def:acyclification_inter_scc}
    \item for any pair of distinct nodes $\{i,j\}$ such that $i \in\sccsub{\C{G}}{j}$: $i \to j \in \C{E}'$ or $i \ot j \in \C{E}'$ or $i \oto j \in \C{F}'$.\label{def:acyclification_intra_scc}
  \end{compactenum}
\end{definition}
In words: all strongly connected components are made fully-connected, edges between strongly connected components are preserved, and any edge into a node in a strongly connected component must be copied and made adjacent to all nodes in the strongly connected component. Note that a DMG may have multiple acyclifications. An example is given in Figure~\ref{fig:acyclification}.

\begin{figure*}
\scalebox{0.7}{\begin{tikzpicture}
  \begin{scope}
    \node at (-1.5,2.5) {$\C{G}$:};
    \node[var] (X1) at ( 0.00, 2.25) {$X_1$};
    \node[var] (X2) at (-2.00, 0.00) {$X_2$};
    \node[var] (X3) at (-0.75, 0.75) {$X_3$};
    \node[var] (X4) at (-0.75,-0.75) {$X_4$};
    \node[var] (X5) at ( 0.75, 0.75) {$X_5$};
    \node[var] (X6) at ( 0.75,-0.75) {$X_6$};
    \node[var] (X8) at ( 2.25,-0.75) {$X_7$};
    \node[var] (X9) at (-2.5,  1.50) {$X_8$};
    \node[var] (X10) at (-1.5, 1.50) {$X_9$};
    \node[var] (X11) at ( 1.5, 2.25) {$X_{10}$};
    \draw[arr] (X11) to (X1);
    \draw[arr] (X9) to (X2);
    \draw[arr] (X10) to (X2);
    \draw[biarr] (X1) edge (X3);
    \draw[arr] (X2) edge (X4);
    \draw[arr] (X3) edge (X4);
    \draw[arr] (X4) edge (X6);
    \draw[arr] (X6) edge (X5);
    \draw[arr] (X5) edge (X3);
    \draw[arr] (X6) to (X8);
  \end{scope}
  \begin{scope}[xshift=6cm]
    \node at (-1.5,2.5) {$\C{G}^{\acy}$:};
    \node[var] (X1) at ( 0.00, 2.25) {$X_1$};
    \node[var] (X2) at (-2.25, 0.00) {$X_2$};
    \node[var] (X3) at (-0.75, 0.75) {$X_3$};
    \node[var] (X4) at (-0.75,-0.75) {$X_4$};
    \node[var] (X5) at ( 0.75, 0.75) {$X_5$};
    \node[var] (X6) at ( 0.75,-0.75) {$X_6$};
    \node[var] (X8) at ( 2.25,-0.75) {$X_7$};
    \node[var] (X9) at (-2.5,  1.50) {$X_8$};
    \node[var] (X10) at (-1.5, 1.50) {$X_9$};
    \node[var] (X11) at ( 1.5, 2.25) {$X_{10}$};
    \draw[arr] (X11) to (X1);
    \draw[arr] (X9) to (X2);
    \draw[arr] (X10) to (X2);
    \draw[biarr] (X1) edge (X3);
    \draw[biarr,bend left=11] (X1) edge (X4);
    \draw[biarr] (X1) edge (X5);
    \draw[biarr,bend right=11] (X1) edge (X6);
    \draw[arr] (X2) edge (X3);
    \draw[arr] (X2) edge (X4);
    \draw[arr,bend right=11] (X2) edge (X5);
    \draw[arr,bend left=11] (X2) edge (X6);
    \draw[biarr] (X3) edge (X4);
    \draw[biarr] (X4) edge (X6);
    \draw[biarr] (X6) edge (X5);
    \draw[biarr] (X5) edge (X3);
    \draw[biarr] (X3) edge (X6);
    \draw[biarr] (X4) edge (X5);
    \draw[arr] (X6) to (X8);
  \end{scope}
  \begin{scope}[xshift=12cm]
    \node at (-1.5,2.5) {$\C{G}'$:};
    \node[var] (X1) at ( 0.00, 2.25) {$X_1$};
    \node[var] (X2) at (-2.25, 0.00) {$X_2$};
    \node[var] (X3) at (-0.75, 0.75) {$X_3$};
    \node[var] (X4) at (-0.75,-0.75) {$X_4$};
    \node[var] (X5) at ( 0.75, 0.75) {$X_5$};
    \node[var] (X6) at ( 0.75,-0.75) {$X_6$};
    \node[var] (X8) at ( 2.25,-0.75) {$X_7$};
    \node[var] (X9) at (-2.5,  1.50) {$X_8$};
    \node[var] (X10) at (-1.5, 1.50) {$X_9$};
    \node[var] (X11) at ( 1.5, 2.25) {$X_{10}$};
    \draw[arr] (X11) to (X1);
    \draw[arr] (X9) to (X2);
    \draw[arr] (X10) to (X2);
    \draw[biarr] (X1) edge (X3);
    \draw[biarr,bend left=11] (X1) edge (X4);
    \draw[biarr] (X1) edge (X5);
    \draw[biarr,bend right=11] (X1) edge (X6);
    \draw[arr] (X2) edge (X3);
    \draw[arr] (X2) edge (X4);
    \draw[arr,bend right=11] (X2) edge (X5);
    \draw[arr,bend left=11] (X2) edge (X6);
    \draw[arr] (X3) edge (X4);
    \draw[arr] (X4) edge (X6);
    \draw[arr] (X5) edge (X6);
    \draw[arr] (X3) edge (X5);
    \draw[arr] (X3) edge (X6);
    \draw[arr] (X4) edge (X5);
    \draw[arr] (X6) to (X8);
  \end{scope}
  \begin{scope}[xshift=18cm]
    \node at (-1.5,2.5) {$\C{P}$:};
    \node[var] (X1) at ( 0.00, 2.25) {$X_1$};
    \node[var] (X2) at (-2.25, 0.00) {$X_2$};
    \node[var] (X3) at (-0.75, 0.75) {$X_3$};
    \node[var] (X4) at (-0.75,-0.75) {$X_4$};
    \node[var] (X5) at ( 0.75, 0.75) {$X_5$};
    \node[var] (X6) at ( 0.75,-0.75) {$X_6$};
    \node[var] (X8) at ( 2.25,-0.75) {$X_7$};
    \node[var] (X9) at (-2.5,  1.50) {$X_8$};
    \node[var] (X10) at (-1.5, 1.50) {$X_9$};
    \node[var] (X11) at ( 1.5, 2.25) {$X_{10}$};
    \draw[carr] (X11) to (X1);
    \draw[carr] (X9) to (X2);
    \draw[carr] (X10) to (X2);
    \draw[biarr] (X1) edge (X3);
    \draw[biarr,bend left=11] (X1) edge (X4);
    \draw[biarr] (X1) edge (X5);
    \draw[biarr,bend right=11] (X1) edge (X6);
    \draw[arr] (X2) edge (X3);
    \draw[arr] (X2) edge (X4);
    \draw[arr,bend right=11] (X2) edge (X5);
    \draw[arr,bend left=11] (X2) edge (X6);
    \draw[bicarr] (X3) edge (X4);
    \draw[bicarr] (X4) edge (X6);
    \draw[bicarr] (X5) edge (X6);
    \draw[bicarr] (X3) edge (X5);
    \draw[bicarr] (X3) edge (X6);
    \draw[bicarr] (X4) edge (X5);
    \draw[arr] (X6) to (X8);
  \end{scope}
\end{tikzpicture}}
\caption{From left to right: Directed mixed graph $\C{G}$, two of its acyclifications ($\C{G}^\acy$ and $\C{G}'$) and the DPAG output by FCI $\C{P} = \PAGFCI(\IM_\sigma(\C{G})) = \PAGFCI(\IM_d(\C{G}')) = \PAGFCI(\IM_d(\C{G}^\acy))$.\label{fig:acyclification}}
\end{figure*}

All acyclifications share certain ``spurious'' edges: the additional incoming directed and adjacent bidirected edges connecting nodes of two different strongly connected components. These have no \emph{causal} interpretation but are necessary to correctly represent the $\sigma$-separation properties as $d$-separation properties. 
The skeleton of any acyclification $\C{G}'$ of $\C{G}$ equals the skeleton of $\C{G}$
plus additional spurious adjacencies: the edges $i \ttt j$ with $i \sto k$ and $k \in \sccsub{\C{G}}{j}$,
and the edges $i \ttt j$ with $i \in \sccsub{\C{G}}{j}$ where $i$ and $j$ are not adjacent in $\C{G}$.
These ``spurious edges'' added in any acyclification of a DMG $\C{G}$ correspond with (non-trivial) inducing paths in $\C{G}$.

The ``raison d'\^etre'' for acyclifications is that they are $\sigma$-separation-equivalent to the original DMG, i.e., their
$\sigma$-independence models agree:
\begin{proposition}\label{prop:Markov_equivalence_acyclification}
  For any DMG $\C{G}$ and any acyclification $\C{G}'$ of $\C{G}$, $\IM_\sigma(\C{G}) = \IM_\sigma(\C{G}') = \IM_d(\C{G}')$.
\end{proposition}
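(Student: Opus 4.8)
The plan is to split the asserted chain into an easy equality and a harder one. The easy part is $\IM_\sigma(\C{G}')=\IM_d(\C{G}')$: since $\C{G}'$ is acyclic, every strongly connected component of $\C{G}'$ is a singleton, and every non-collider on a walk in $\C{G}'$ has an outgoing directed edge on that walk that necessarily points to a node in a different (singleton) component, so the extra clause distinguishing $\sigma$-blocking from $d$-blocking is always in force; the collider conditions are literally the same. It then suffices to prove $\IM_\sigma(\C{G})=\IM_d(\C{G}')$, which I would establish walk-theoretically: for all pairwise disjoint $X,Y,Z\subseteq\C{V}$ and any $x\in X$, $y\in Y$, there is a $\sigma$-open walk between $x$ and $y$ given $Z$ in $\C{G}$ if and only if there is a $d$-open walk between $x$ and $y$ given $Z$ in $\C{G}'$. (One could instead try to deduce $\IM_\sigma(\C{G})=\IM_\sigma(\C{G}')$ from the analogous statement for HEDGes in \citet{ForreMooij_1710.08775} by regarding $\C{G}$ as a HEDG, but reconciling the two notions of acyclification seems to take comparable effort.)

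First I would record some facts about acyclifications that the surgery relies on. (a) Every directed edge of $\C{G}'$ ``expands'' to a directed path of $\C{G}$: an inter-component edge $i\to j\in\C{E}'$ comes from some $i\to k\in\C{E}$ with $k\in\sccsub{\C{G}}{j}$, and an intra-component edge joins two nodes of a single $\C{G}$-component; hence $\ansub{\C{G}'}{Z}\subseteq\ansub{\C{G}}{Z}$. (b) Conversely, $v\in\ansub{\C{G}}{Z}$ iff $\sccsub{\C{G}}{v}\cap\ansub{\C{G}'}{Z}\neq\emptyset$; the nontrivial direction is an induction along a directed path $v\rightsquigarrow z$ in $\C{G}$, using that an edge $a\to b$ of $\C{G}$ with $a\notin S:=\sccsub{\C{G}}{b}$ is lifted in $\C{G}'$ to edges $a\to b'$ for \emph{every} $b'\in S$ (and similarly for bidirected edges entering $S$). (c) Each $\C{G}$-component is a complete subgraph of $\C{G}'$, and all of its nodes share the same status for membership in $\ansub{\C{G}}{Z}$ (they have the same descendants in $\C{G}$).

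For the direction from $\C{G}'$ to $\C{G}$, take a $d$-open walk $\rho$ in $\C{G}'$ and expand each edge that does not already occur in $\C{G}$ into a connecting walk inside $\C{G}$: a lifted edge with an arrowhead at $b\in S$ becomes that same edge to the witnessing node $b'\in S$ followed by a directed path $b'\rightsquigarrow b$ inside $S$; an added intra-component edge between $u,v$ in a component $S$ becomes a walk inside $S$ carrying an arrowhead at $u$ (respectively $v$) iff the $\C{G}'$-edge does, which exists because every node of $S$ lies on a directed cycle within $S$. Every vertex introduced this way is a non-collider whose outgoing edges stay within its own component, so it never $\sigma$-blocks; the marks at the retained vertices $a,b,u,v$ are unchanged, so collider status elsewhere is untouched; and the colliders of $\rho$ survive and lie in $\ansub{\C{G}'}{Z}\subseteq\ansub{\C{G}}{Z}$. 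The result is a $\sigma$-open walk in $\C{G}$.

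For the direction from $\C{G}$ to $\C{G}'$, take a $\sigma$-open walk $\pi$ in $\C{G}$; the idea is to contract its maximal ``$\sigma$-shortcuts'', i.e.\ maximal subwalks whose interior vertices are non-endpoint non-colliders lying in $Z$ (by $\sigma$-openness these point only inside a single component $S$, so by (c) and the lifting property the subwalk can be replaced by a single $\C{G}'$-edge between its endpoints with the correct marks), and to reroute any collider $c$ of $\pi$ that happens to lie in $\ansub{\C{G}}{Z}\setminus\ansub{\C{G}'}{Z}$ through a node $u\in\sccsub{\C{G}}{c}\cap\ansub{\C{G}'}{Z}$, which exists by (b) and is legitimate because the arrowheads into $c$ are lifted to arrowheads into every node of $\sccsub{\C{G}}{c}$. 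After these modifications no non-endpoint non-collider lies in $Z$ and every collider is a $\C{G}'$-ancestor of $Z$, so the walk is $d$-open in $\C{G}'$. This last direction is where I expect the real work to be: one must contract and reroute the ``$\sigma$-open but not $d$-open'' portions of $\pi$ while simultaneously (i) preserving the arrowhead pattern at every retained vertex, so that no blocking collider is introduced and no collider degenerates into a non-collider in $Z$, and (ii) keeping every surviving collider inside $\ansub{\C{G}'}{Z}$, even though $\ansub{\C{G}}{Z}$ and $\ansub{\C{G}'}{Z}$ genuinely differ and the acyclification is not unique. Facts (a)--(c) together with the lifting property of incoming component edges are precisely what make the bookkeeping go through for an arbitrary acyclification.
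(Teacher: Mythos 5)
Your decomposition is reasonable, and two of its three pieces are fine: $\IM_\sigma(\C{G}')=\IM_d(\C{G}')$ for acyclic $\C{G}'$ is standard, and your expansion of a $d$-open walk in $\C{G}'$ into a $\sigma$-open walk in $\C{G}$ works (one small point you gloss over: for an added intra-component edge $u \oto v$ you need a walk inside the component with arrowheads at \emph{both} ends and \emph{no} colliders, which no path can provide; a back-and-forth walk along directed cycles, e.g.\ $u \ot \dots \ot v \to \dots \to u \to \dots \to v$, does the job and keeps every introduced vertex a non-collider pointing inside its component). Note that the paper itself proves the proposition by citing Theorem 2.8.3 of Forr\'e and Mooij (2017), so you are attempting strictly more than the paper does; that is legitimate, but then the hard direction must actually be carried out, and this is where there is a genuine gap. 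You flag the direction from a $\sigma$-open walk in $\C{G}$ to a $d$-open walk in $\C{G}'$ as ``the real work'' and then assert that your facts (a)--(c) make the bookkeeping go through via two moves, but both moves fail as stated. The contraction move claims a maximal subwalk whose interior vertices are $Z$-non-colliders ``can be replaced by a single $\C{G}'$-edge between its endpoints with the correct marks'': when the endpoints of that subwalk lie in the same strongly connected component of $\C{G}$, the $\C{G}'$-edge between them is an \emph{arbitrary} choice of the acyclification, so the required marks need not be available. The rerouting move is justified by ``the arrowheads into $c$ are lifted to arrowheads into every node of $\sccsub{\C{G}}{c}$'', but that lifting only holds for edges whose other endpoint lies outside $\sccsub{\C{G}}{c}$; an arrowhead at $c$ coming from an intra-component edge is not lifted at all.

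A concrete example where the plan breaks: let $\C{G}$ have the directed cycle $a \to v \to b \to a$ plus $x \to a$ and $b \to y$, take $Z=\{v\}$ and the $\sigma$-open walk $x \to a \to v \to b \to y$, and take the acyclification $\C{G}'$ with intra-component edges $v \to a$, $v \to b$, $a \oto b$ and lifted edges $x \to a$, $x \to v$, $x \to b$, $b \to y$. Your contraction of $a \to v \to b$ calls for $a \to b$ in $\C{G}'$, which does not exist; using the existing $a \oto b$ gives $x \to a \oto b \to y$, blocked because $a$ is a collider with $a \notin \ansub{\C{G}'}{Z}$ (here $\ansub{\C{G}'}{\{v\}}=\{v,x\}$); and your rerouting of $a$ through the only candidate $u=v$ fails because the $\C{G}'$-edge between $v$ and $b$ is $v \to b$, with a tail at $v$. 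The proposition of course still holds ($x \to b \to y$ is $d$-open given $\{v\}$ in $\C{G}'$), but the witnessing walk is obtained by re-attaching the lifted incoming edge $x \to \cdot$ at a \emph{different} node of the component and skipping the segment entirely --- a move absent from your plan. What actually works is a surgery organized around the maximal subwalks of the $\sigma$-open walk inside a single strongly connected component $S$: the bounding inter-component edges are preserved in $\C{G}'$ and, when into $S$, can be re-attached to any node of $S$; if a bounding edge has a tail at its attachment point, that point cannot be in $Z$ (else the walk would be $\sigma$-blocked), so the segment can be crossed by one intra-component $\C{G}'$-edge or skipped; and if the segment is entered and left through arrowheads it must contain a collider of the original walk, so $S \subseteq \ansub{\C{G}}{Z}$ and, by your fact (b), $S \cap \ansub{\C{G}'}{Z} \neq \emptyset$, allowing the whole segment to collapse to a single collider there. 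Until an argument at this level (or the citation the paper uses) is supplied, the inclusion $\IM_d(\C{G}') \subseteq \IM_\sigma(\C{G})$ remains unproven in your write-up.
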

\begin{proof}
  This follows from Theorem 2.8.3 in \citep{ForreMooij_1710.08775}.
\end{proof}
One particular acyclification that we will make use of repeatedly will be denoted $\C{G}^{\acy}$, and is obtained by
replacing all strongly connected components of $\C{G}$ by fully-connected bidirected components without any directed edges
(i.e., if $i \in \sccsub{\C{G}}{j}$ then $i \oto j$ in $\C{G}'$, but neither $i \to j$ nor $j \to i$ in $\C{G}'$).
Another useful set of acyclifications is obtained by replacing all strongly connected components of $\C{G}$ by 
arbitrary fully-connected DAGs, and optionally adding an arbitrary set of bidirected edges.
Other important properties of acyclifications are:
\begin{proposition}\label{prop:acyclification}
  Let $\C{G}$ be a DMG and $i,j$ two nodes in $\C{G}$.
  \begin{compactenum}[(i)]
  \item If $i \in \ansub{\C{G}}{j}$ then there exists an acyclification $\C{G}'$ of $\C{G}$ with $i \in \ansub{\C{G}'}{j}$; \label{prop:acyclification_anc}
    \item If $i \notin \ansub{\C{G}}{j}$ then $i \notin \ansub{\C{G}'}{j}$ for all acyclifications $\C{G}'$ of $\C{G}$; \label{prop:acyclification_non_anc}
    \item There is an inducing path between $i$ and $j$ in $\C{G}$ if and only if there is an inducing path between $i$ and $j$ in $\C{G}'$ for any acyclification $\C{G}'$ of $\C{G}$.\label{prop:acyclification_inducing_path}
  \end{compactenum}
\end{proposition}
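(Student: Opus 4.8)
The plan is to treat the three parts separately; in each of them the case $i = j$ is trivial (a node is always its own ancestor, and the trivial path is an inducing path), so I would assume $i \ne j$ throughout. The only part requiring real work is~\eref{prop:acyclification_anc}; parts~\eref{prop:acyclification_non_anc} and~\eref{prop:acyclification_inducing_path} then follow by, respectively, tracing a directed path edge by edge and combining the earlier results.

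For part~\eref{prop:acyclification_anc} I would build a tailored acyclification; note that the canonical acyclification $\C{G}^{\acy}$ will \emph{not} work, since it destroys all intra-component ancestral relations. Fix a directed path $\pi$ from $i$ to $j$ in $\C{G}$. Because the strongly connected components of $\C{G}$ form a DAG under reachability, $\pi$ meets each component it visits in one contiguous stretch; write $a_C$ and $b_C$ for the nodes at which $\pi$ enters and leaves such a component $C$. I would then define $\C{G}'$ by replacing each component $C$ with a fully connected DAG on its nodes whose vertex order puts $a_C$ first and $b_C$ last (arbitrary order for components not visited by $\pi$), keeping exactly the inter-component edges that Definition~\ref{def:acyclification}\eref{def:acyclification_inter_scc} forces, and adding no further bidirected edges. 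I would check that $\C{G}'$ is an ADMG: its directed edges respect the linear order obtained by extending the reachability order on components of $\C{G}$ and refining each component by its chosen order, so $\C{G}'$ has no directed cycle; the remaining conditions of Definition~\ref{def:acyclification} hold by construction. Finally $a_C \in \ansub{\C{G}'}{b_C}$ inside each visited component, while every step $b_C \to a_{C'}$ of $\pi$ joining consecutive visited components is an inter-component edge of $\C{G}$ and hence survives in $\C{G}'$; concatenating these segments along $\pi$ exhibits a directed path from $i$ to $j$ in $\C{G}'$, so $i \in \ansub{\C{G}'}{j}$.

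For part~\eref{prop:acyclification_non_anc} I would argue by contraposition. Suppose $i \in \ansub{\C{G}'}{j}$ for some acyclification $\C{G}'$, and fix a directed path $i = v_0 \to v_1 \to \dots \to v_m = j$ in $\C{G}'$. For each consecutive pair I would show $v_t \in \ansub{\C{G}}{v_{t+1}}$: this is immediate if $v_t, v_{t+1}$ lie in the same component of $\C{G}$; otherwise $v_t \notin \sccsub{\C{G}}{v_{t+1}}$, so Definition~\ref{def:acyclification}\eref{def:acyclification_inter_scc} yields $k \in \sccsub{\C{G}}{v_{t+1}}$ with $v_t \to k \in \C{E}$, and then $v_t \in \ansub{\C{G}}{k} = \ansub{\C{G}}{v_{t+1}}$. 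Transitivity of the ancestor relation in $\C{G}$ gives $i \in \ansub{\C{G}}{j}$, contradicting the hypothesis. This part is routine.

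For part~\eref{prop:acyclification_inducing_path} I would avoid manipulating inducing paths directly, reading the property off the independence model instead. By Proposition~1 (the characterization of inducing paths), there is an inducing path between $i$ and $j$ in $\C{G}$ iff $\sigmacon{i}{j}{Z}{\C{G}}$ for every $Z \subseteq \C{V}\setminus\{i,j\}$, a condition that depends only on $\IM_\sigma(\C{G})$. By Proposition~\ref{prop:Markov_equivalence_acyclification}, $\IM_\sigma(\C{G}) = \IM_\sigma(\C{G}')$ for every acyclification $\C{G}'$, so the identical condition holds for $\C{G}'$, and applying Proposition~1 to the DMG $\C{G}'$ this is equivalent to there being an inducing path between $i$ and $j$ in $\C{G}'$. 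I expect the only things to watch here are keeping the quantifier over $Z$ aligned across the two graphs and recalling that, as $\C{G}'$ is acyclic, its $\sigma$-separations and $d$-separations coincide; the genuine difficulty of the proposition lies in the construction for part~\eref{prop:acyclification_anc}.
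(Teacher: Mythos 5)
Your proposal is correct and follows essentially the same route as the paper: for (\ref{prop:acyclification_anc}) you construct an acyclification whose fully connected DAGs on the strongly connected components are ordered so that the fixed directed path survives (the paper phrases this as taking ``shortcut'' edges within each component, which is the same idea), for (\ref{prop:acyclification_non_anc}) you observe that every directed edge of $\C{G}'$ corresponds to an ancestral relation in $\C{G}$, and for (\ref{prop:acyclification_inducing_path}) you combine the inducing-path characterization of Proposition~1 with Proposition~\ref{prop:Markov_equivalence_acyclification}, exactly as the paper does. Your write-up is simply more detailed, e.g.\ in verifying acyclicity of the constructed $\C{G}'$ and in spelling out the contiguity of the path within each strongly connected component.
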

\begin{proof}
(\ref{prop:acyclification_anc})
If $i \in \ansub{\C{G}}{j}$, then there exists a directed path from $i$ to $j$ in $\C{G}$. Any such directed path visits each strongly connected component of $\C{G}$ at most once. We can choose an acyclification $\C{G}'$ of $\C{G}$ with a suitably chosen DAG on each strongly connected component, in which we can take the shortcut $k \to l$ instead of each longest subpath $k \to u_1 \to \dots \to u_n \to l$ that consists entirely of nodes within a single strongly connected component of $\C{G}$. This yields a directed path from $i$ to $j$ in $\C{G}'$.

(\ref{prop:acyclification_non_anc}) Let $\C{G}'$ be an acyclification of $\C{G}$. Each directed edge $k \to l$ in $\C{G}'$ is either in $\C{G}$ or corresponds with $k \in \ansub{\C{G}}{l}$. Hence all ancestral relations in $\C{G}'$ must be present in $\C{G}$ as well. 





(\ref{prop:acyclification_inducing_path}) Follows directly from the separation properties of inducing paths and Proposition~\ref{prop:Markov_equivalence_acyclification}.
\end{proof}

\subsection{SOUNDNESS AND COMPLETENESS}

In the acyclic setting, the FCI algorithm was shown to be sound and complete \citep{Zhang2008_AI}.
The notion of acyclifications, together with their elementary properties
(Propositions \ref{prop:Markov_equivalence_acyclification} and \ref{prop:acyclification}) allows us to easily extend these soundness and completeness results to the $\sigma$-separation setting (allowing for cycles).

Consider FCI as a mapping $\PAGFCI$ from independence models (on variables $\C{V}$) to DPAGs (with vertex set $\C{V}$),
which maps the independence model of a DMG $\C{G}$ to the DPAG $\PAGFCI(\IM_\sigma(\C{G}))$.
\begin{theorem}\label{theo:fci_sound_complete}
In the $\sigma$-separation setting (but without selection bias), FCI is
\begin{compactenum}[(i)]
    \item \emph{sound}: for all DMGs $\C{G}$, $\PAGFCI(\IM_\sigma(\C{G}))$ contains $\C{G}$;\label{theo:fci_sound_complete_sound}
    \item \emph{arrowhead complete}: for all DMGs $\C{G}$: if $i \notin \ansub{\tilde{\C{G}}}{j}$ for any DMG $\tilde{\C{G}}$ that is $\sigma$-Markov equivalent to $\C{G}$, then there is an arrowhead $i \ots j$ in $\PAGFCI(\IM_\sigma(\C{G}))$;
    \item \emph{tail complete}: for all DMGs $\C{G}$, if $i \in \ansub{\tilde{\C{G}}}{j}$ in any DMG $\tilde{\C{G}}$ that is $\sigma$-Markov equivalent to $\C{G}$, then there is a tail $i \to j$ in $\PAGFCI(\IM_\sigma(\C{G}))$;
    \item \emph{Markov complete}: for all DMGs $\C{G}_1$ and $\C{G}_2$, $\C{G}_1$ is $\sigma$-Markov equivalent to $\C{G}_2$ iff $\PAGFCI(\IM_\sigma(\C{G}_1)) = \PAGFCI(\IM_\sigma(\C{G}_2))$.
\end{compactenum}
\end{theorem}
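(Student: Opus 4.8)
The plan is to reduce all four statements to the already-established acyclic theory of FCI by passing through acyclifications. Fix a DMG $\C{G}$ and let $\C{G}'$ be any acyclification of $\C{G}$ (Definition~\ref{def:acyclification}). Because FCI is, by construction, a function of the input independence model only, and because $\IM_\sigma(\C{G}) = \IM_d(\C{G}')$ by Proposition~\ref{prop:Markov_equivalence_acyclification}, we get the key identity $\PAGFCI(\IM_\sigma(\C{G})) = \PAGFCI(\IM_d(\C{G}'))$, the right-hand side being an ordinary acyclic run of FCI on the ADMG $\C{G}'$. For that run, \citet{Zhang2008_AI} supplies soundness and arrowhead/tail/Markov completeness relative to $\C{G}'$. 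What remains is to transport these conclusions back to $\C{G}$ using Proposition~\ref{prop:acyclification} (which states that (non-)ancestral relations and the existence of inducing paths are essentially preserved between $\C{G}$ and its acyclifications), together with the observation that every ADMG that is $d$-Markov equivalent to $\C{G}'$ is $\sigma$-Markov equivalent to $\C{G}$ (since $\sigma$- and $d$-separation coincide on ADMGs, and such an ADMG has $\IM_d = \IM_\sigma(\C{G})$).

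For \emph{soundness}, write $\C{P} := \PAGFCI(\IM_\sigma(\C{G}))$ and verify the three conditions of Definition~\ref{def:dpag_contains_dmg}. Adjacency: $i,j$ are adjacent in $\C{P}$ iff there is an (acyclic) inducing path between them in $\C{G}'$ (acyclic soundness plus the inducing-path characterization of MAG adjacencies), iff there is an inducing path between them in $\C{G}$ by Proposition~\ref{prop:acyclification}(\ref{prop:acyclification_inducing_path}). If $i \to j$ in $\C{P}$, acyclic soundness gives $i \in \ansub{\C{G}'}{j}$, and the contrapositive of Proposition~\ref{prop:acyclification}(\ref{prop:acyclification_non_anc}) gives $i \in \ansub{\C{G}}{j}$. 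The only delicate point is condition (ii): from $i \sto j$ in $\C{P}$ we need $j \notin \ansub{\C{G}}{i}$, and here a single acyclification does not suffice, since $\C{G}$ may carry ancestral relations absent from $\C{G}'$. Instead, argue by contradiction: if $j \in \ansub{\C{G}}{i}$ then by Proposition~\ref{prop:acyclification}(\ref{prop:acyclification_anc}) there is \emph{some} acyclification $\C{G}''$ with $j \in \ansub{\C{G}''}{i}$; since $\IM_d(\C{G}'') = \IM_\sigma(\C{G})$ we also have $\C{P} = \PAGFCI(\IM_d(\C{G}''))$, and acyclic soundness applied to $\C{G}''$ forces $j \notin \ansub{\C{G}''}{i}$ --- a contradiction.

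For \emph{arrowhead completeness} and \emph{tail completeness}, note that the hypotheses range over all DMGs $\sigma$-Markov equivalent to $\C{G}$, hence over every ADMG $\C{H}$ that is $d$-Markov equivalent to $\C{G}'$. So if $i \notin \ansub{\tilde{\C{G}}}{j}$ for every $\sigma$-equivalent $\tilde{\C{G}}$, then $i \notin \ansub{\C{H}}{j}$ for every such $\C{H}$; passing to the induced DMAGs (the ADMG$\to$DMAG map preserves ancestral relations and $d$-separations, and adjacencies and (non-)ancestral marks are invariant within a Markov equivalence class of DMAGs) and invoking the acyclic arrowhead-completeness of FCI yields an arrowhead $i \ots j$ in $\PAGFCI(\IM_d(\C{G}')) = \C{P}$. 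Tail completeness is the same argument with ``ancestor'' replacing ``non-ancestor'' and the acyclic tail-completeness theorem. For \emph{Markov completeness}, the forward direction is immediate ($\sigma$-Markov equivalence means equal $\sigma$-independence models, so FCI returns the same DPAG). Conversely, if $\PAGFCI(\IM_\sigma(\C{G}_1)) = \PAGFCI(\IM_\sigma(\C{G}_2))$, then picking acyclifications $\C{G}_1',\C{G}_2'$ and applying Proposition~\ref{prop:Markov_equivalence_acyclification} this reads $\PAGFCI(\IM_d(\C{G}_1')) = \PAGFCI(\IM_d(\C{G}_2'))$; acyclic Markov-completeness then gives $\IM_d(\C{G}_1') = \IM_d(\C{G}_2')$, and Proposition~\ref{prop:Markov_equivalence_acyclification} once more gives $\IM_\sigma(\C{G}_1) = \IM_\sigma(\C{G}_2)$.

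The proof is essentially bookkeeping, and the two places where care is needed are exactly the main obstacles: (a) soundness item (ii), where one must quantify over all acyclifications rather than fixing one, as spelled out above; and (b) matching the way the acyclic results of \citet{Zhang2008_AI} are phrased --- in terms of invariant edge marks across the Markov equivalence class of MAGs --- with the ADMG/DMG-level statements of the theorem, which rests on the standard facts that adjacencies and (non-)ancestral relations are common to a Markov equivalence class of DMAGs and are preserved by the ADMG$\to$DMAG map.
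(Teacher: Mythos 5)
Your proposal is correct and follows essentially the same route as the paper's own proof: reduce to the acyclic case via acyclifications using Proposition~\ref{prop:Markov_equivalence_acyclification}, invoke Zhang's acyclic soundness/completeness results, and transfer conclusions back to the DMG with Proposition~\ref{prop:acyclification}. Your contradiction argument for condition (ii) of containment and your remark on matching Zhang's MAG-level phrasing are just more explicit renderings of steps the paper treats implicitly.
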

\begin{proof}
  The main idea is the following (see also Figure~\ref{fig:different_graphs}). For all DMGs $\C{G}$, $\IM_\sigma(\C{G}) = \IM_d(\C{G}')$ for any acyclification $\C{G}'$ of $\C{G}$ (Proposition~\ref{prop:Markov_equivalence_acyclification}). 
  Hence FCI maps any acyclification $\C{G}'$ of $\C{G}$ to the same DPAG $\PAGFCI(\IM_\sigma(\C{G}))$, and thereby any conclusion we draw about these acyclifications can be transferred back to a conclusion about $\C{G}$ by means of Proposition~\ref{prop:acyclification}.

  To prove soundness, let $\C{G}$ be a DMG and let $\C{P} = \PAGFCI(\IM_\sigma(\C{G}))$. 
  The acyclic soundness of FCI means that for all ADMGs $\C{G}'$, $\PAGFCI(\IM_d(\C{G}'))$ contains $\C{G}'$. 
  Hence, by Proposition~\ref{prop:Markov_equivalence_acyclification}, $\C{P}$ contains $\C{G}'$ for all acyclifications $\C{G}'$ of $\C{G}$. 
  But then $\C{P}$ must contain $\C{G}$:
  \begin{itemize} 
    \item if two vertices $i,j$ are adjacent in $\C{P}$ then there is an inducing path between $i,j$ in any acyclification of $\C{G}$, which holds if and only if there is an inducing path between $i,j$ in $\C{G}$ (Proposition~\ref{prop:acyclification}(\ref{prop:acyclification_inducing_path});
    \item if $i \sto j$ in $\C{P}$, then $j \notin \ansub{\C{G}'}{i}$ for any acyclification $\C{G}'$ of $\C{G}$, and hence $j \notin \ansub{\C{G}}{i}$ (Proposition~\ref{prop:acyclification}(\ref{prop:acyclification_anc}));
    \item if $i \to j$ in $\C{P}$, then $i \in \ansub{\C{G}'}{j}$ for all acyclifications $\C{G}'$ of $\C{G}$, and hence $i \in \ansub{\C{G}}{j}$ (Proposition~\ref{prop:acyclification}(\ref{prop:acyclification_non_anc})).
  \end{itemize}

  To prove arrowhead completeness, let $\C{G}$ be a DMG and suppose that $i \in \ansub{\tilde{\C{G}}}{j}$ in any DMG $\tilde{\C{G}}$ that is $\sigma$-Markov equivalent to $\C{G}$.
  Since $\C{G}^\acy$ is $\sigma$-Markov equivalent to $\C{G}$, this implies in particular that for all ADMGs $\tilde{\C{G}}$ that are $d$-Markov equivalent to $\C{G}^\acy$, $i \in \ansub{\tilde{\C{G}}}{j}$.
  Because of the acyclic arrowhead completeness of FCI, there must be an arrowhead $i \ots j$ in $\PAGFCI(\IM_d(\C{G}^\acy)) = \PAGFCI(\IM_\sigma(\C{G}))$.
  Tail completeness is proved similarly.

  To prove Markov completeness: Proposition~\ref{prop:Markov_equivalence_acyclification}
  implies both $\IM_\sigma(\C{G}_1) = \IM_d(\C{G}_1^\acy)$ and
  $\IM_\sigma(\C{G}_2) = \IM_d(\C{G}_2^\acy)$. From the acyclic Markov completeness of FCI
  (Proposition~\ref{prop:PAG=MEC} in the Supplementary Material), it then follows that
  $\C{G}_1^\acy$ must be $d$-Markov equivalent to $\C{G}_2^\acy$, and
  hence $\C{G}_1$ must be $\sigma$-Markov equivalent to $\C{G}_2$.

  Alternatively, the statement of this theorem can be seen to be a special case of Theorem~\ref{theo:generalizing_soundness_completeness}, applied with the trivial background knowledge $\Psi(\C{G}) = 1$ for all DMGs $\C{G}$, to $\Phi : \C{G} \mapsto \PAGFCI(\IM_\sigma(\C{G}))$, combined with the known (acyclic) soundness and completeness results of FCI \citep{Zhang2008_AI}. 
  Note here that the trivial background knowledge $\Psi(\C{G}) = 1$ satisfies Assumption~\ref{ass:acybk} as follows immediately from Proposition~\ref{prop:acyclification}.
\end{proof}
Note that these definitions of soundness and completeness reduce to their acyclic counterparts \citep{Zhang2008_AI} when restricting to ADMGs.
In particular, the soundness and Markov completeness properties together imply that the DPAG $\PAGFCI(\IM_\sigma(\C{G}))$ output by FCI, when given as input the $\sigma$-independence model of a DMG $\C{G}$, represents the $\sigma$-Markov equivalence class of $\C{G}$.
In other words, FCI provides a characterization of the $\sigma$-Markov equivalence class of a DMG.
This is, to the best of our knowledge, the first such characterization.

In order to read off the independence model from the DPAG $\PAGFCI(\IM_\sigma(\C{G}))$, one can follow the same procedure as in the acyclic case: first construct a representative DMAG (for details, see \citet{Zhang2008_AI}) and then apply the $d$-separation criterion to this DMAG. 
While the soundness of FCI (Theorem~\ref{theo:fci_sound_complete}(\ref{theo:fci_sound_complete_sound})) allows us to read off
some (non-)ancestral relations from the DPAG output by FCI, this is by far not all causal information that is identifiable from the $\sigma$-Markov equivalence class. 
In the following sections, we will discuss how various causal features can be identified from DPAGs.

\subsection{IDENTIFIABLE (NON-)ANCESTRAL RELATIONS}

We make use here of the following definition \citep{Zhang2008_AI}:
\begin{definition}\label{def:pdpath}
  A path $v_0,e_1,v_1,\dots,v_n$ between nodes $v_0$ and $v_n$ in a DPAG $\C{P}$ is called a \emph{possibly directed path from $v_0$ to $v_n$} if for each $i=1,\dots,n$, the edge $e_i$ between $v_{i-1}$ and $v_i$ is not into $v_{i-1}$ (i.e., is of the form $v_{i-1} \ctc v_i$, $v_{i-1} \cto v_i$, or $v_{i-1} \to v_i$). 
  The path is called \emph{uncovered} if every subsequent triple is unshielded, i.e., $v_i$ and $v_{i-2}$ are not adjacent in $\C{P}$ for $i=2,\dots,n$.
\end{definition}

\citet{Zhang2006} conjectured the soundness and completess of a criterion to read off all invariant ancestral relations from a complete DPAG, i.e., to identify the ancestral relations that are present in all Markov equivalent ADMGs that are represented by a complete DPAG.
\citet{Roumpelaki++_UAIWS_16} proved soundness of the criterion.\footnote{They also claim to have proved completeness, but their proof is flawed: the last part of the proof that aims to prove that $u,v$ are non-adjacent appears to be incomplete.}
We extend Theorem 3.1 in \citep{Roumpelaki++_UAIWS_16} to DPAGs and DMGs:
\begin{proposition}\label{prop:cdpag_ancestors_dmg}
  Let $\C{G}$ be a DMG, and let $\C{P}$ be a DPAG that contains $\C{G}$, and such that all unshielded triples in $\C{P}$ have been oriented according to FCI rule $\C{R}0$ \citep{Zhang2008_AI} using $\IM_\sigma(\C{G})$.
  For two nodes $i \ne j \in \C{P}$: If 
  \begin{compactitem}
    \item there is a directed path from $i$ to $j$ in $\C{P}$, or 
    \item there exist uncovered possibly directed paths (see Definition~\ref{def:pdpath}) from $i$ to $j$ in $\C{P}$ of the form $i,u,\dots,j$ and $i,v,\dots,j$ such that $u,v$ are distinct non-adjacent nodes in $\C{P}$, 
  \end{compactitem}
  then $i \in \ansub{\C{G}}{j}$, i.e., $i$ is ancestor of $j$ according to $\C{G}$.
\end{proposition}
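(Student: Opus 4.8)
The plan is to reduce the statement to the corresponding acyclic result of \citet{Roumpelaki++_UAIWS_16} by passing to an acyclification, exploiting the fact that the relevant structural features of $\C{P}$ (directed paths, uncovered possibly directed paths, unshielded triples oriented by $\C{R}0$) are insensitive to which acyclification of $\C{G}$ we choose, while (non-)ancestral relations transfer back to $\C{G}$ via Proposition~\ref{prop:acyclification}. Concretely, fix an acyclification $\C{G}'$ of $\C{G}$. Since $\IM_\sigma(\C{G}) = \IM_d(\C{G}')$ (Proposition~\ref{prop:Markov_equivalence_acyclification}), the same DPAG $\C{P}$ (after $\C{R}0$-orientation) is a DPAG that, in the acyclic sense, contains $\C{G}'$: adjacency of $i,j$ in $\C{P}$ corresponds to an inducing path between $i,j$ in $\C{G}'$ (using Proposition~\ref{prop:acyclification}(\ref{prop:acyclification_inducing_path})), arrowheads in $\C{P}$ are non-ancestral in $\C{G}'$, and tails in $\C{P}$ are ancestral in $\C{G}'$; and the $\C{R}0$-orientation of unshielded triples is determined solely by $\IM_\sigma(\C{G}) = \IM_d(\C{G}')$, hence is the same orientation the acyclic criterion would use for $\C{G}'$.

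With this in place, I would argue as follows. In the first case, a directed path $i \to \dots \to j$ in $\C{P}$ gives, by the definition of ``$\C{P}$ contains $\C{G}$'' (Definition~\ref{def:dpag_contains_dmg}(iii)) applied edge by edge, a chain $i \in \ansub{\C{G}}{\cdot} \subseteq \dots \subseteq \ansub{\C{G}}{j}$, so $i \in \ansub{\C{G}}{j}$ directly---no acyclification is even needed here. In the second case, apply Theorem~3.1 of \citet{Roumpelaki++_UAIWS_16} (its soundness half, which is the part that is correctly proved) to the acyclic data $(\C{G}', \C{P})$: the two uncovered possibly directed paths $i,u,\dots,j$ and $i,v,\dots,j$ with $u,v$ distinct and non-adjacent in $\C{P}$ force $i \in \ansub{\C{G}'}{j}$. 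By Proposition~\ref{prop:acyclification}(\ref{prop:acyclification_non_anc}) (contrapositive of ``non-ancestor in $\C{G}$ implies non-ancestor in every acyclification''), $i \in \ansub{\C{G}'}{j}$ implies $i \in \ansub{\C{G}}{j}$, which is the desired conclusion.

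The one point that needs genuine care---and which I expect to be the main obstacle---is verifying that the hypotheses of the Roumpelaki et al.\ theorem are actually met by $(\C{G}', \C{P})$. Their statement is phrased for a \emph{complete} PAG (one that is the FCI output, with all orientation rules $\C{R}0$--$\C{R}10$ applied), whereas here $\C{P}$ is only assumed to contain $\C{G}$ and to have its unshielded triples oriented by $\C{R}0$; so I need to check that their soundness argument only uses the $\C{R}0$-orientation of unshielded triples together with the three ``contains'' conditions, and not the full FCI closure. Inspecting their proof, the soundness direction works by tracing the two paths and using that at each node the incoming marks cannot both be arrowheads (which would create an unshielded collider that $\C{R}0$ would have oriented, contradicting the path being possibly directed)---and this argument indeed goes through for any $\C{P}$ containing $\C{G}$ with $\C{R}0$ applied. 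I would state this reduction explicitly as the core of the proof, perhaps isolating it as: ``$\C{P}$ with $\C{R}0$-oriented unshielded triples satisfies, with respect to any acyclification $\C{G}'$ of $\C{G}$, the hypotheses of \citep[Theorem~3.1]{Roumpelaki++_UAIWS_16}.'' A secondary, purely bookkeeping point is to confirm that ``uncovered'' and ``directed path'' are purely combinatorial properties of $\C{P}$ (Definition~\ref{def:pdpath}) and hence unchanged when we reinterpret $\C{P}$ relative to $\C{G}'$ instead of $\C{G}$; this is immediate since $\C{P}$ itself is the same graph in both cases.
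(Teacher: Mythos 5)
Your handling of the first bullet matches the paper's (edge-by-edge use of Definition~\ref{def:dpag_contains_dmg}(iii)), but for the second bullet the paper argues directly in $\C{G}$---using only the $\C{R}0$-orientation of unshielded triples to force, triple by triple, that one of the two uncovered possibly directed paths corresponds to a directed path in $\C{G}$---whereas you route the argument through a fixed acyclification $\C{G}'$ and the acyclic result of Roumpelaki et al. That reduction has a genuine gap at its very first step: the claim that $\C{P}$ ``in the acyclic sense contains $\C{G}'$'' is false in general. Adjacencies and arrowheads do transfer (Proposition~\ref{prop:acyclification}(\ref{prop:acyclification_inducing_path}) and (\ref{prop:acyclification_non_anc})), but the tail condition does not: a tail $x \to y$ in $\C{P}$ only guarantees $x \in \ansub{\C{G}}{y}$, and Proposition~\ref{prop:acyclification}(\ref{prop:acyclification_anc}) only gives \emph{some} acyclification realizing that single ancestral relation; for a fixed $\C{G}'$ it may fail, and different tails of $\C{P}$ may require incompatible acyclifications. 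Concretely, take $\C{G}$ with the two-cycle $x_1 \to x_2$, $x_2 \to x_1$, edges $x_2 \to y_1$, $x_1 \to y_2$, and bidirected edges $x_1 \oto y_1$, $x_2 \oto y_2$: all four nodes are pairwise connected by inducing paths, so a complete DPAG $\C{P}$ with $x_1 \to y_1$, $x_2 \to y_2$ and circles elsewhere contains $\C{G}$ (and $\C{R}0$ holds vacuously), yet no acyclification $\C{G}'$ has both $x_1 \in \ansub{\C{G}'}{y_1}$ and $x_2 \in \ansub{\C{G}'}{y_2}$, since that would require both $x_1 \to x_2$ and $x_2 \to x_1$ in the ADMG. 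Hence the hypotheses of \citep[Theorem~3.1]{Roumpelaki++_UAIWS_16} need not hold for the pair $(\C{G}',\C{P})$, and you cannot conclude $i \in \ansub{\C{G}'}{j}$; the subsequent transfer back to $\C{G}$, which is fine in itself, then has nothing to transfer.

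A secondary point you already flagged---that Roumpelaki et al.\ state their result for the complete FCI output while $\C{P}$ here is only $\C{R}0$-closed---is also not innocuous, because a DPAG containing $\C{G}$ may carry correct but non-invariant orientations (as in the example above), so possibly directed paths in $\C{P}$ and in $\PAGFCI(\IM_\sigma(\C{G}))$ are not interchangeable. The paper avoids both problems by never leaving the cyclic graph: given the two uncovered possibly directed paths $i,u,\dots,j$ and $i,v,\dots,j$ with $u,v$ non-adjacent, either $i \in \ansub{\C{G}}{u}$ or, to avoid an unshielded collider at $i$ that $\C{R}0$ (run with $\IM_\sigma(\C{G})$) would have oriented, $i \in \ansub{\C{G}}{v}$; and then the corresponding path must realize a directed path in $\C{G}$, since any failure along it would again produce an unoriented unshielded collider, contradicting the $\C{R}0$ assumption. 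If you want to salvage your strategy, you would have to restrict the role of the acyclification to the independence model only (which is all Proposition~\ref{prop:Markov_equivalence_acyclification} licenses) and redo the path-tracing argument with respect to $\C{G}$ itself---which is essentially the paper's proof.
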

\begin{proof}
  If there is a directed path from $i$ to $j$ in $\C{P}$, say $v_1 \to \dots \to v_n$ with $v_1 = i$ and $v_n = j$, then $v_m \in \ansub{\C{G}}{v_{m+1}}$ for all $m = 1,\dots,n-1$. Hence $i \in \ansub{\C{G}}{j}$.

  Otherwise, assume that there exist uncovered possibly directed paths from $i$ to $j$ in $\C{P}$ of the form $i,u,\dots,j$ and $i,v,\dots,j$ such that $u,v$ are distinct and non-adjacent in $\C{P}$. 
  If $i \in \ansub{\C{G}}{u}$, the path $i,u,\dots,j$ must actually correspond with a directed path from $i$ to $j$ in $\C{G}$, because otherwise it would contain unshielded colliders that were not oriented, contradicting the assumptions. 
  If $i \notin \ansub{\C{G}}{u}$ instead, one obtains that $i \in \ansub{\C{G}}{v}$ to avoid an unshielded collider $u \sto i \ots v$ in $\C{P}$ that was not oriented. 
  Hence the path $i,v,\dots,j$ must correspond with a directed path from $i$ to $j$ in $\C{G}$, because otherwise it would contain unshielded colliders that were not oriented, contradicting the assumptions.
  In both cases, $i \in \ansub{\C{G}}{j}$.
\end{proof}
As an example, from the (complete) DPAG in Figure~\ref{fig:acyclification} it follows that $X_2 \in \ansub{\C{G}}{X_4}$, and $X_2 \in \ansub{\C{G}}{X_7}$. 

\citet[p.\ 137]{Zhang2006} provides a sound and complete criterion to read off definite non-ancestors from a complete DPAG,
assuming acyclicity. We can directly extend this criterion to DPAGs and DMGs:
\begin{proposition}\label{prop:cdpag_nonancestors}
  Let $\C{G}$ be a DMG, and let $\C{P}$ be a DPAG that contains $\C{G}$.
  For two nodes $i\ne j \in \C{P}$:
  if there is no possibly directed path from $i$ to $j$ in $\C{P}$ then $i \notin \ansub{\C{G}}{j}$.
\end{proposition}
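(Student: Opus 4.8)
The plan is to argue by contraposition: I would assume $i \in \ansub{\C{G}}{j}$ and construct a possibly directed path from $i$ to $j$ in $\C{P}$, contradicting the hypothesis. Since $i \ne j$ and $i \in \ansub{\C{G}}{j}$, there is a directed path $i = w_0 \to w_1 \to \dots \to w_n = j$ in $\C{G}$ with $n \ge 1$ and pairwise distinct vertices. I claim the vertex sequence $w_0, w_1, \dots, w_n$ witnesses a possibly directed path from $i$ to $j$ in $\C{P}$.

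To verify this, fix $m \in \{0,\dots,n-1\}$. The single edge $w_m \to w_{m+1}$ is, on its own, a path in $\C{G}$ between $w_m$ and $w_{m+1}$ with no non-endpoint vertices, hence it is (vacuously) an inducing path in the sense of Definition~\ref{def:inducing_path}. By clause~(i) of Definition~\ref{def:dpag_contains_dmg}, $w_m$ and $w_{m+1}$ are therefore adjacent in $\C{P}$. Moreover, $w_m \to w_{m+1}$ in $\C{G}$ gives $w_m \in \ansub{\C{G}}{w_{m+1}}$, so if the edge between $w_m$ and $w_{m+1}$ in $\C{P}$ were into $w_m$ (i.e.\ $w_{m+1} \sto w_m$ in $\C{P}$), clause~(ii) of Definition~\ref{def:dpag_contains_dmg} would yield $w_m \notin \ansub{\C{G}}{w_{m+1}}$, a contradiction. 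Hence this edge is not into $w_m$, and since $\C{P}$ is a DPAG (no undirected and no circle--tail edges) it must be one of $w_m \ctc w_{m+1}$, $w_m \cto w_{m+1}$, or $w_m \to w_{m+1}$. As this holds for every $m$, the sequence $w_0,\dots,w_n$ is a possibly directed path from $i$ to $j$ in $\C{P}$, and the claim follows.

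I do not anticipate a genuine obstacle here: once one notices that a lone directed edge of $\C{G}$ is trivially an inducing path, everything reduces to a single application of clauses~(i) and~(ii) of the containment definition, edge by edge along the directed path. The only point requiring care is the orientation bookkeeping when invoking clause~(ii) --- keeping track of which endpoint the arrowhead is ``into'' --- and, unlike Proposition~\ref{prop:cdpag_ancestors_dmg}, this direction needs no reasoning about $\C{R}0$ or unshielded colliders.
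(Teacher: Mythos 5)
Your proof is correct and follows essentially the same route as the paper, whose entire argument is the one-line contrapositive observation that a directed path from $i$ to $j$ in $\C{G}$ must correspond to a possibly directed path in $\C{P}$. Your edge-by-edge verification via clauses (i) and (ii) of Definition~\ref{def:dpag_contains_dmg} (noting that a single directed edge is trivially an inducing path) simply fills in the details the paper leaves implicit.
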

\begin{proof}
  If $i \in \ansub{\C{G}}{j}$, then the directed path from $i$ to $j$ must correspond with a possibly directed path in $\C{P}$.
\end{proof}
As an example, from the DPAG in Figure~\ref{fig:acyclification} we can read off that $X_8$ cannot be ancestor of $X_1$ in $\C{G}$, nor the other way around.
However, $X_3 \ctc X_6 \to X_7$ is a possibly directed path in the DPAG, and so $X_3$ may be (and in this case is) ancestor of $X_7$ in $\C{G}$.

\subsection{IDENTIFIABLE NON-CONFOUNDED PAIRS}

While in ADMGs and DMGs confounding is indicated by bidirected edges, in DPAGs confounding can also ``hide'' behind
directed edges. The following notion is of key importance in this regard:
\begin{definition}[\citet{Zhang2008_JMLR}]\label{def:visible}
A directed edge $i \to j$ in a DMAG is said to be \emph{visible} if there is a node
$k$ not adjacent to $j$, such that either there is an edge between $k$ and $i$ that is into $i$, or there is a
collider path between $k$ and $i$ that is into $i$ and every collider on the path is a parent of $j$. Otherwise
$i \to j$ is said to be \emph{invisible}.
The same notion applies to a DPAG, but is then called \emph{definitely visible} (and its negation \emph{possibly invisible}).
\end{definition}
For example, in the DPAG in Figure~\ref{fig:acyclification}, edge $X_6 \to X_7$
is definitely visible (by virtue of $X_2 \to X_6$), as are all edges
$X_2 \to \{X_3,X_4,X_5,X_6\}$ (by virtue of $X_8 \cto X_2$, or $X_9 \cto X_2$).

The notion of (in)visibility is closely related with confounding, as shown in Lemma 9 and 10 in \citet{Zhang2008_JMLR}.
To generalize this, we make use of the following Lemma.
\begin{lemma}\label{lemm:inducing_walk_into}
  Let $\C{P}$ be a DPAG that contains DMG $\C{G}$, and let $k \sto i$ be an edge in $\C{P}$ that is into $i$.
  Then there exists an inducing walk in $\C{G}$ between $k$ and $i$ that is into $i$.
  If $k \oto i$ in $\C{P}$, then there exists an inducing walk in $\C{G}$ between $k$ and $i$ that is both into $k$ and into $i$.
\end{lemma}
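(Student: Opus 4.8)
The plan is to unwind the definition of ``$\C{P}$ contains $\C{G}$'' (Definition~\ref{def:dpag_contains_dmg}) together with the characterization of adjacency in $\C{P}$ via inducing paths in $\C{G}$, and then build the required inducing walk explicitly. Since $k \sto i$ in $\C{P}$, clause (i) of Definition~\ref{def:dpag_contains_dmg} gives an inducing path $\pi$ between $k$ and $i$ in $\C{G}$, and clause (ii) gives $i \notin \ansub{\C{G}}{k}$. The goal is to modify $\pi$ (or concatenate it with a directed path) so that the resulting inducing \emph{walk} is into $i$; and when $k \oto i$ in $\C{P}$, also into $k$ by the symmetric argument using $k \notin \ansub{\C{G}}{i}$.

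First I would handle the case where $\pi$ is already into $i$: then $\pi$ itself is the desired inducing walk and we are done. So assume $\pi$ is out of $i$, i.e.\ the edge of $\pi$ incident to $i$ is $i \to \ell$ for the neighbor $\ell$ of $i$ on $\pi$ (it cannot be $i \oto \ell$ since that would be into $i$, nor $i \leftarrow \ell$ for the same reason). Because $i$ is a non-collider on $\pi$ that is an endpoint, the inducing-path condition (Definition~\ref{def:inducing_path}) imposes no constraint on $i$ itself, but the neighbor $\ell$, if a non-collider, must point only within its strongly connected component, and all colliders on $\pi$ lie in $\ansub{\C{G}}{\{k,i\}}$. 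The key structural fact I want to exploit is: since $i \to \ell$ and $\pi$ is an inducing path out of $i$, tracing $\pi$ forward from $i$ one follows directed edges $i \to \ell \to \cdots$ until hitting either the endpoint $k$ or the first collider $c$ on $\pi$; in the former case $i \in \ansub{\C{G}}{k}$, contradicting clause (ii); in the latter case $c$ is a collider, hence $c \in \ansub{\C{G}}{\{k,i\}}$, and since we already have a directed path $i \to \cdots \to c$, if $c \in \ansub{\C{G}}{i}$ then $c$ and $i$ are in the same SCC and in fact $i \in \ansub{\C{G}}{k}$ via $c$'s ancestral path to $k$ would again need care — so more precisely $c \in \ansub{\C{G}}{k}$ (if $c \in \ansub{\C{G}}{i}$ then $i, c$ lie in one SCC, and combined with... ). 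The clean way: there is a directed path from $i$ to $c$, and $c \in \ansub{\C{G}}{k}$ or $c \in \ansub{\C{G}}{i}$; the latter with the former-direction path forces $i$ and $c$ in the same SCC, whence a directed path from $i$ back through $c$ cannot reach $k$ without making $i \in \ansub{\C{G}}{k}$, unless $c \in \ansub{\C{G}}{k}$; so in all surviving cases $c \in \ansub{\C{G}}{k}$, giving a directed path $c \to \cdots \to k$.

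Given such a collider $c$ with $i \to \cdots \to c \to \cdots \to k$ (directed), I would construct the inducing walk into $i$ as follows: take the subwalk of $\pi$ from $k$ to $c$ (which is an inducing subwalk, as subwalks of inducing paths between their colliders/endpoints inherit the conditions — each interior non-collider still points within its SCC, and colliders remain in $\ansub{\C{G}}{\{k,i\}} \subseteq \ansub{\C{G}}{\{k,c\}}$ after noting $i$-ancestors relate to $c$ appropriately), then append the reverse of the directed path $c \leftarrow \cdots \leftarrow i$, i.e.\ go $c \to \cdots$? — wait, I need it \emph{into} $i$, so I append $c \to c' \to \cdots$ — no: I want to arrive at $i$ with an arrowhead, so I should use the directed path $i \to \cdots \to c$ traversed \emph{from $c$ to $i$}, which enters $i$ via $i \leftarrow \cdots$? that is out of $i$. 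The correct move: since I also have $c \to \cdots \to k$ and $c$ is ancestor of $k$, instead route $k \leftsquigarrow \pi \text{ to some collider}$... The actual right construction, mirroring the proof of Proposition~1 (ii)$\Rightarrow$(iii) in the excerpt: concatenate the subwalk of $\pi$ from $k$ up to the \emph{last} collider $c$ before $i$, then the directed path $c \to \cdots \to k'$? I will instead use: walk $k \leadsto c$ along $\pi$, then $c \to \cdots \to k \to$? This is the step where I must be careful — the cleanest is to observe that concatenating the $k$-to-$c$ inducing subwalk with the directed path from $c$ to $k$ reversed is wrong; rather, append to the $i$-endpoint: take $\pi$ from $i$ to the first collider $c$ (directed, $i\to\cdots\to c$), and note $c\in\ansub{\C{G}}{k}$ so there's $c\to\cdots\to k$; then the walk $k \to \cdots \to c$ (reverse: this is $k \leftarrow \cdots \leftarrow c$, into... no). I will present it as: form the walk $\nu = (k\text{-to-}c\text{ along }\pi) \cdot (c\text{-to-}i\text{ along }\pi\text{ reversed})$ — but that is just $\pi$. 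The genuine fix: replace the out-of-$i$ tail of $\pi$ by going to $k$ and back via the bidirected/collider structure is not available; so instead I claim no such ``out of $i$'' case can be an inducing path unless there's already a collider adjacent to $i$, and then $i$ being into that collider...

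The main obstacle, and where I will spend the real work, is exactly this: converting an inducing \emph{path} that leaves $i$ via a tail into an inducing \emph{walk} that enters $i$ via an arrowhead, using only $j \notin \ansub{\C{G}}{i}$-type facts (clause (ii)). The resolution I anticipate: if $\pi$ is out of $i$, follow the forced directed segment from $i$ to the first collider $c$ (it must exist, else $i \in \ansub{\C{G}}{k}$, contradicting clause (ii) of Definition~\ref{def:dpag_contains_dmg}); this collider satisfies $c \in \ansub{\C{G}}{\{k,i\}}$, and combined with $i \in \ansub{\C{G}}{c}$ we get $c$ and $i$ in the same SCC or $c \in \ansub{\C{G}}{k}$; in the SCC case, $c \to \cdots \to i$ exists too, so prepend $i \leftarrow c$ — wait, SCC means $c \to \cdots \to i$, which is \emph{into} $i$: append the walk $k \leadsto_\pi c \to \cdots \to i$, an inducing walk into $i$ (the inserted directed segment has all interior nodes non-colliders pointing within the SCC, $c$ becomes a non-collider pointing within its SCC, and $c$ was handled). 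In the non-SCC case $c \in \ansub{\C{G}}{k}$: then append $k \leadsto_\pi c \to \cdots \to k$? that revisits $k$; instead the walk is $i \to\cdots\to c \to \cdots\to k$ concatenated with $(\pi$ from $k$ to $c)$ reversed giving a walk from $i$ to $c$ — no. Honestly the clean statement: use the SCC resolution when it applies, and when $c$ is a strict ancestor of $k$ (not of $i$), take the walk $W = (i \to \cdots \to c) \cdot (c \leadsto_{\pi^{-1}} k)$; its endpoint at $i$ is entered with an arrowhead? No, $i\to$ is a tail at $i$! So the non-SCC subcase must be vacuous, i.e.\ I must show $c \notin \ansub{\C{G}}{k}\setminus\sccsub{\C{G}}{i}$ — and indeed if $c\in\ansub{\C{G}}{k}$ with $i\in\ansub{\C{G}}{c}$ then $i\in\ansub{\C{G}}{k}$, contradicting clause (ii). Good — so only the SCC case survives, and the construction above (append $k\leadsto_\pi c\to\cdots\to i$) yields the inducing walk into $i$. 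For the $k\oto i$ case, run the symmetric argument at the $k$-endpoint using $k\notin\ansub{\C{G}}{i}$, combining both to get a walk into $k$ and into $i$; the concatenation-at-a-collider bookkeeping (colliders stay in $\ansub{\C{G}}{\{k,i\}}$, interior non-colliders point within their SCC) is the routine part I would not belabor.
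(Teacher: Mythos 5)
Your final resolution is exactly the paper's argument: adjacency in $\C{P}$ gives an inducing walk in $\C{G}$, and if it leaves $i$ by a tail, the forced directed segment $i \to \dots \to c$ to the first collider $c$ together with clause (ii) of Definition~\ref{def:dpag_contains_dmg} rules out $c \in \ansub{\C{G}}{k}$, so $c \in \sccsub{\C{G}}{i}$ and the tail segment can be replaced by a directed path from $c$ into $i$, with the symmetric fix at $k$ for the $k \oto i$ case. Despite the many false starts in the middle, the construction you settle on is correct and essentially identical to the paper's proof.
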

\begin{proof}
  If $k \sto i$ in $\C{P}$, then there 
  exists an inducing walk between $k$ and $i$ in $\C{G}$ because $k$ and $i$ are adjacent in $\C{P}$ and $\C{P}$ contains $\C{G}$.
  If this inducing walk were out of $i$, it would be of the form $k \ldots \sto u_n \ot u_{n-1} \ot \dots \ot u_1 \ot i$, where $u_n$ is the first collider on the walk that one encounters when following the directed edges out of $i$. 
  $u_n$ must be ancestor of $i$ or $k$ in $\C{G}$, and it cannot be ancestor of $k$ (because then $i$ would be ancestor of $k$, contradicting the orientation $k \sto i$ in $\C{P}$), hence it must be ancestor of $i$. 
  Thus there exists a walk $k \ldots \sto u_n \to \dots \to i$ in $\C{G}$ where we replaced the subwalk $u_n \ot u_{n-1} \ot \dots \ot i$ by a directed path from $u_n$ to $i$ \typoadd{that is entirely in $\sccsub{\C{G}}{i}$}.
  It is clear that this is an inducing \typo{path}{walk} in $\C{G}$ between $k$ and $i$ that is into $i$.

  If $k \oto i$ in $\C{P}$, then by similar reasoning, we obtain an inducing \typo{path}{walk} in $\C{G}$ between $k$ and $i$ that is into $k$ as well as into $i$.
\end{proof}
This allows us to generalize Lemma 9 in \citep{Zhang2008_JMLR} to the cyclic setting (with almost identical proof).
\begin{lemma}\label{lemm:Zhang_invisible}
  Let $\C{P}$ be a DPAG, and $i \to j$ a directed edge in $\C{P}$.
  If $i \to j$ is definitely visible in $\C{P}$, then for all DMGs $\C{G}$ contained in $\C{P}$, there exists no inducing walk between $i$ and $j$ in $\C{G}$ that is into $i$. 
\end{lemma}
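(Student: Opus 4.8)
The plan is to argue by contradiction. Suppose $i \to j$ is definitely visible in $\C{P}$, witnessed by a node $k$ that is \emph{not} adjacent to $j$ in $\C{P}$, yet some DMG $\C{G}$ contained in $\C{P}$ admits an inducing walk $\omega$ between $i$ and $j$ that is into $i$. I would splice $\omega$ together with walks extracted from the visibility witness to produce an inducing walk in $\C{G}$ between $k$ and $j$; since an inducing walk between two nodes exists if and only if an inducing path does (the proposition following Definition~\ref{def:inducing_path}), Definition~\ref{def:dpag_contains_dmg}(i) then forces $k$ and $j$ to be adjacent in $\C{P}$, contradicting the choice of $k$.

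Next I would unpack Definition~\ref{def:visible}. Both cases of the witness are subsumed by the following: there is a collider path $k = w_0, w_1, \dots, w_{m-1}, w_m = i$ in $\C{P}$ with $w_1, \dots, w_{m-1}$ colliders (so every edge incident to an interior $w_l$ carries an arrowhead at $w_l$), the path is into $i$, and each interior $w_l$ is a parent of $j$ in $\C{P}$; the first case of the definition is simply $m=1$, where there are no interior colliders. Thus the first edge (between $k$ and $w_1$) is into $w_1$, each edge between $w_{l-1}$ and $w_l$ with $2 \le l \le m-1$ is into both endpoints, and the last edge (between $w_{m-1}$ and $i$) is into both $w_{m-1}$ and $i$. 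Applying Lemma~\ref{lemm:inducing_walk_into} to each of these $\C{P}$-edges gives inducing walks $\alpha_1,\dots,\alpha_m$ in $\C{G}$, where $\alpha_l$ runs between $w_{l-1}$ and $w_l$ and is into $w_l$ (and into $w_{l-1}$ whenever that $\C{P}$-edge was into $w_{l-1}$). Moreover $i \to j$ in $\C{P}$ with Definition~\ref{def:dpag_contains_dmg}(iii) yields $i \in \ansub{\C{G}}{j}$, and each $w_l \to j$ in $\C{P}$ yields $w_l \in \ansub{\C{G}}{j}$.

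I would then form the concatenation $W = \alpha_1 \cdot \alpha_2 \cdots \alpha_m \cdot \omega$, a walk in $\C{G}$ from $k$ to $j$, and verify that it is an inducing walk. At each of the junctions $w_1,\dots,w_{m-1}$ and at $i$, both adjacent segments point into the junction node (using the ``into'' properties above, and that $\omega$ is into $i$ by hypothesis), so each junction is a collider of $W$; and each junction lies in $\ansub{\C{G}}{j} \subseteq \ansub{\C{G}}{\{k,j\}}$. Every other collider of $W$ lies inside some $\alpha_l$ or inside $\omega$, hence is an ancestor of the endpoint-pair of that segment, which in turn lies in $\ansub{\C{G}}{j} \subseteq \ansub{\C{G}}{\{k,j\}}$ once $w_{l-1}, w_l, i \in \ansub{\C{G}}{j}$ is invoked. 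Finally, every non-endpoint non-collider of $W$ is an interior non-collider of some $\alpha_l$ or of $\omega$ with its incident walk-edges unchanged, so it still points via directed edges only to walk-neighbours in its own strongly connected component of $\C{G}$. Hence $W$ is an inducing walk between $k$ and $j$ in $\C{G}$, $k$ and $j$ are adjacent in $\C{P}$, and we have the desired contradiction.

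The step I expect to demand the most care is the junction bookkeeping of the third paragraph: one must check occurrence-by-occurrence on $W$ which walk-edges carry arrowheads at each $w_l$ and at $i$ (so those nodes are genuinely colliders of $W$, not non-colliders), and one must be sure the only ancestral memberships used are the ones actually available --- ``$w_l$ is a parent of $j$'' for the interior colliders and ``$i \to j$ in $\C{P}$'' for $i$ --- together with the ``into'' conclusions of Lemma~\ref{lemm:inducing_walk_into}. The degenerate situations ($m=1$; whether $w_{l-1}$ is itself a collider; vertices repeating along the walks) should be checked but are routine.
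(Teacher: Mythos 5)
Your proposal is correct and follows essentially the same route as the paper's proof: apply Lemma~\ref{lemm:inducing_walk_into} edge-by-edge along the visibility witness (the $k\sto i$ case being the length-one collider path), concatenate the resulting inducing walks with the assumed inducing walk into $i$ using that the interior colliders are parents of $j$ and $i\in\ansub{\C{G}}{j}$, and conclude $k$ and $j$ are adjacent in $\C{P}$, contradicting definite visibility (the paper phrases this as a contrapositive, which is an immaterial difference). The only nitpick is that colliders inside the first segment $\alpha_1$ may be ancestors of $k$ rather than of $j$, which is still fine since the inducing-walk condition only requires membership in $\ansub{\C{G}}{\{k,j\}}$.
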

\begin{proof}
  Suppose $\C{G}$ is a DMG contained in $\C{P}$. 
  Then $i \in \ansub{\C{G}}{j}$ and there exists an inducing walk between $i$ and $j$ in $\C{G}$. 
  We will prove the contrapositive. 
  Assume that there exists an inducing walk in $\C{G}$ between $i$ and $j$ that is into $i$.
  Let $k$ be another vertex in $\C{P}$. 

  If $k \sto i$ in $\C{P}$, then there is an inducing walk between $k$ and $i$ in $\C{G}$ that is into $i$ by Lemma~\ref{lemm:inducing_walk_into}.
 
  If there is a collider path $\pi$ in $\C{P}$ from $k$ to $i$ that is into $i$ and such that every non-endpoint vertex on the walk is parent of $j$ in $\C{P}$, then there is an inducing walk between $k$ and $i$ in $\C{G}$ that is into $i$.
  For each pair of adjacent vertices $(v_i,v_{i+1})$ on $\pi$, Lemma~\ref{lemm:inducing_walk_into} gives the existence of an inducing walk in $\C{G}$ between $v_i$ and $v_{i+1}$ that is into $v_{i+1}$, and also into $v_i$ unless possibly $v_i = k$.
  Because each vertex other than $k$ on $\pi$ is ancestor in $\C{G}$ of $j$, all these inducing walks can be concatenated into one inducing walk in $\C{G}$ between $k$ and $i$ that is into $i$.

  Concatenating the inducing walk between $k$ and $i$ that is into $i$ with the inducing walk between $i$ and $j$ that is into $i$ we obtain an inducing walk between $k$ and $j$ (note that $i$ becomes a collider that is ancestor of $j$) in $\C{G}$. 
  Hence $k$ and $j$ are adjacent in $\C{P}$.
  Since this holds for all $k$, the directed edge $i \to j$ in $\C{P}$ cannot be definitely visible. 
\end{proof}
This provides us with a sufficient condition to read off unconfounded pairs of nodes from DPAGs:
\begin{proposition}\label{prop:identify_nonconfounders}
  Let $\C{P}$ be a \typo{DMAG}{DPAG} and $\C{G}$ be a DMG contained in $\C{P}$.
  Let $i \ne j$ be two nodes in $\C{P}$. 
  If $i$ and $j$ are not adjacent in $\C{P}$, or if there is a directed edge $i \to j$ in $\C{P}$ that is definitely visible in $\C{P}$, then $i \oto j$ is absent from $\C{G}$.
\end{proposition}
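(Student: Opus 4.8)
The plan is to dispose of the two cases of the hypothesis separately, in each case exhibiting the hypothetical edge $i \oto j$ in $\C{G}$ as a (trivial) inducing path/walk and then contradicting a property that has already been established. At the outset I would remark that a DMAG is in particular a DPAG (it simply has no circle marks), so that Definition~\ref{def:dpag_contains_dmg} and the lemmas about DPAGs proved above apply verbatim to $\C{P}$.

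First I would treat the case in which $i$ and $j$ are \emph{not} adjacent in $\C{P}$. Since $\C{P}$ contains $\C{G}$, Definition~\ref{def:dpag_contains_dmg}(i) tells us that $i$ and $j$ are adjacent in $\C{P}$ if and only if there is an inducing path between $i$ and $j$ in $\C{G}$. The key (elementary) observation is that any single edge between $i$ and $j$ in $\C{G}$ --- in particular the edge $i \oto j$ --- is a length-one inducing path in the sense of Definition~\ref{def:inducing_path}: such a path has no colliders and no non-endpoint non-colliders, so both defining conditions hold vacuously. Hence if $i$ and $j$ are non-adjacent in $\C{P}$, there is no edge whatsoever between them in $\C{G}$, and in particular $i \oto j$ is absent from $\C{G}$.

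Second I would treat the case in which $i \to j$ is a directed edge in $\C{P}$ that is definitely visible in $\C{P}$. Suppose, towards a contradiction, that $i \oto j$ is present in $\C{G}$. Then, as in the previous paragraph, this edge is itself an inducing walk (indeed an inducing path) between $i$ and $j$ in $\C{G}$, and it is into $i$. But Lemma~\ref{lemm:Zhang_invisible} states that if $i \to j$ is definitely visible in $\C{P}$, then for every DMG $\C{G}$ contained in $\C{P}$ there is no inducing walk between $i$ and $j$ in $\C{G}$ that is into $i$. This contradicts the existence of the edge $i \oto j$ in $\C{G}$, so $i \oto j$ must be absent from $\C{G}$.

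The proof is short because the substantive work has already been done: Lemma~\ref{lemm:inducing_walk_into} and Lemma~\ref{lemm:Zhang_invisible} encapsulate exactly the cyclic-setting reasoning about inducing walks and visibility that is needed. I therefore do not expect a genuine obstacle here; the only point requiring a moment of care is the degenerate remark that a single bidirected edge counts as a trivial inducing path/walk into both of its endpoints, after which both cases follow immediately.
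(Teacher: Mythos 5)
Your proof is correct and follows essentially the same route as the paper's: non-adjacency in $\C{P}$ rules out any inducing path, hence the edge $i \oto j$ (which is itself a trivial inducing path), and the visible-edge case is exactly an application of Lemma~\ref{lemm:Zhang_invisible} to the inducing walk consisting of the single bidirected edge, which is into $i$. Your explicit remarks that a single edge is vacuously an inducing path/walk and that the DPAG lemmas apply here are just spelled-out versions of steps the paper leaves implicit.
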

\begin{proof}
  If $i$ and $j$ are not adjacent in $\C{P}$, then there is no inducing path between $i$ and $j$ in $\C{G}$ by assumption, and in particular, this rules out the presence of the bidirected edge $i \oto j$ in $\C{G}$.

  If $i \to j$ in $\C{P}$ is definitely visible, by Lemma~\ref{lemm:Zhang_invisible}, there cannot be a bidirected edge $i \oto j$ in any DMG contained in $\C{P}$. 
\end{proof}
For example, from the DPAG in Figure~\ref{fig:acyclification} one can infer that there is no bidirected
edge $X_2 \oto X_7$ in the underlying DMG $\C{G}$, as the two nodes are not adjacent in the DPAG,
and also that there is no bidirected edge between $X_2$ and any node in $\{X_3,X_4,X_5,X_6\}$ in $\C{G}$, as all
these edges are definitely visible in the DPAG.

\subsection{IDENTIFYING DIRECT (NON-)CAUSES}

Contrary to DMGs, a directed edge in a DPAG does not necessarily correspond with a \emph{direct} causal relation.
The following proposition provides sufficient conditions to identify the absence of a directed edge from the DPAG.
\begin{proposition}\label{prop:identifiable_direct_noncauses}
  Let $\C{P}$ be a DPAG that contains a DMG $\C{G}$.
  For two nodes $i \ne j$ in $\C{P}$, if $i \ots j$ in $\C{P}$, or $i$ and $j$ are not adjacent in $\C{P}$, then $i \to j$ is not present in $\C{G}$.
\end{proposition}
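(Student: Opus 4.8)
The statement splits into two cases according to the two hypotheses. The non-adjacency case is immediate: if $i$ and $j$ are not adjacent in $\C{P}$, then by Definition~\ref{def:dpag_contains_dmg}(i) there is no inducing path between $i$ and $j$ in $\C{G}$; but a directed edge $i \to j$ in $\C{G}$ is itself a (trivial) inducing path between $i$ and $j$, so $i \to j$ cannot be present in $\C{G}$. The real content is the first case, where $i \ots j$ in $\C{P}$ (meaning the edge between $i$ and $j$ has an arrowhead at $i$, so it is one of $i \ot j$, $i \oto j$, or $i \otc j$). I plan to argue by contradiction: suppose $i \to j$ is present in $\C{G}$, and derive that the edge between $i$ and $j$ in $\C{P}$ cannot be into $i$, contradicting $i \ots j$.

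The key step is to show that if $i \to j$ in $\C{G}$, then $i$ must be a non-descendant-free ancestor of $j$ strong enough to force a tail at $i$ in $\C{P}$. Concretely, $i \to j$ in $\C{G}$ gives $i \in \ansub{\C{G}}{j}$. Now I would invoke Definition~\ref{def:dpag_contains_dmg}(ii) in contrapositive form: if the edge between $i$ and $j$ in $\C{P}$ were into $i$ (i.e.\ $j \sto i$ in $\C{P}$), then Definition~\ref{def:dpag_contains_dmg}(ii) would give $i \notin \ansub{\C{G}}{j}$ — but we just established $i \in \ansub{\C{G}}{j}$, a contradiction. Hence the edge between $i$ and $j$ in $\C{P}$ is not into $i$; since $i$ and $j$ are adjacent in $\C{P}$ (they are connected by $i \to j$ in $\C{G}$, which is an inducing path, so by Definition~\ref{def:dpag_contains_dmg}(i) they are adjacent in $\C{P}$), this means the mark at $i$ is a tail or a circle, contradicting $i \ots j$.

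Wait — I should double-check the direction of the desired implication. We are told $i \ots j$ in $\C{P}$ and want to conclude $i \to j$ is \emph{not} in $\C{G}$. The contrapositive-of-a-contrapositive argument above does exactly this: assuming $i \to j \in \C{G}$, we derived that the $\C{P}$-edge between $i$ and $j$ is not into $i$, i.e.\ $\neg(i \ots j)$. So under the hypothesis $i \ots j$ we cannot have $i \to j \in \C{G}$. This is clean and uses only Definition~\ref{def:dpag_contains_dmg}; no appeal to acyclification, inducing-walk lemmas, or visibility is needed.

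I do not anticipate a serious obstacle here — the proposition is essentially an unwinding of the definition of ``$\C{P}$ contains $\C{G}$''. The only subtlety worth a sentence in the writeup is the observation that a single directed edge $i \to j$ in $\C{G}$ qualifies as an inducing path under Definition~\ref{def:inducing_path} (vacuously: it has no colliders and no non-endpoint non-colliders), which is what lets us invoke clause (i) to get adjacency in $\C{P}$ in the non-adjacency case and, implicitly, rules out the degenerate possibility in the first case that $i$ and $j$ are non-adjacent in $\C{P}$ while $i \to j \in \C{G}$. Everything else is a two-line application of clauses (i) and (ii).
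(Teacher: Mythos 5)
Your proposal is correct and is essentially the paper's own proof: the non-adjacency case via clause (i) of Definition~\ref{def:dpag_contains_dmg} (a directed edge is itself an inducing path), and the $i \ots j$ case via clause (ii) giving $i \notin \ansub{\C{G}}{j}$ — your contradiction framing is just the contrapositive of the paper's direct two-line argument, with the same ingredients and no extra machinery.
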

\begin{proof}
If $i \ots j$ in $\C{P}$, then $i \notin \ansub{\C{G}}{j}$ and hence $i \to j$ cannot be present in $\C{G}$.
If $i$ and $j$ are not adjacent in $\C{P}$, then $i \to j$ cannot be present in $\C{G}$ because this would be an inducing path between $i$ and $j$.
\end{proof}
The following proposition was inspired by Theorem~3 in \citet{borboudakis2012tools} and provides sufficient
conditions to conclude the presence of a directed edge from the DPAG.
\begin{proposition}\label{prop:identifiable_direct_causes}
  Let $\C{P}$ be a DPAG that contains a DMG $\C{G}$.
  For two nodes $i \ne j$ in $\C{P}$, if $i \to j$ in $\C{P}$ and:
  \begin{compactenum}[(i)]
    \item there does not exist a possibly directed path from $i$ to $j$ in $\C{P}$ that avoids the edge $i \to j$, or
    \item if there is no inducing walk between $i$ and $j$ in $\C{G}$ that is both into $i$ and $j$ (for example, because $i \to j$ is definitely visible in $\C{P}$), and for all vertices $k$ such that there is a possibly directed path $i \sts k \sts j$ from $i$ to $j$ in $\C{P}$, the edge $k \to j$ is definitely visible in the DPAG $\C{P}^*$ obtained from $\C{P}$ by replacing the edge between $k$ and $j$ by $k \to j$,
  \end{compactenum}
  then $i \to j$ is present in $\C{G}$.
\end{proposition}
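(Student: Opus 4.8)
The plan is to argue by contradiction, assuming $i \to j$ is absent from $\C{G}$, and to exploit what ``$\C{P}$ contains $\C{G}$'' already forces: by Definition~\ref{def:dpag_contains_dmg}, $i$ and $j$ are adjacent in $\C{P}$ and hence joined by an inducing path in $\C{G}$, $j \notin \ansub{\C{G}}{i}$, and $i \in \ansub{\C{G}}{j}$. From $i \in \ansub{\C{G}}{j}$ together with the absence of the direct edge I would extract a directed path $i = w_0 \to w_1 \to \dots \to w_m = j$ in $\C{G}$ with $m \ge 2$ and pairwise distinct vertices. Each one-edge subpath $w_\ell \to w_{\ell+1}$ is an inducing path, so consecutive $w$'s are adjacent in $\C{P}$; and since $w_\ell \in \ansub{\C{G}}{w_{\ell+1}}$, Definition~\ref{def:dpag_contains_dmg}(ii) forces the $\C{P}$-edge there to be not into $w_\ell$. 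Hence $(w_0,\dots,w_m)$ is a possibly directed path from $i$ to $j$ in $\C{P}$ which, the $w_\ell$ being distinct with $m \ge 2$, does not use the edge between $i$ and $j$. This already contradicts hypothesis~(i), so case~(i) is settled.

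For case~(ii), I would first rule out $i \oto j \in \C{G}$: a single bidirected edge would be an inducing walk between $i$ and $j$ into both endpoints, contradicting the first clause of~(ii). Combined with $j \to i \notin \C{G}$ (as $j \notin \ansub{\C{G}}{i}$), this makes $i$ and $j$ non-adjacent in $\C{G}$, so their inducing path has at least one internal vertex. Next I would apply the second clause of~(ii) to $k := w_{m-1}$, which is an internal vertex of the possibly directed path $(w_0,\dots,w_m)$ from $i$ to $j$ and is adjacent to $j$ in $\C{P}$ via an edge not into $k$ (because $k \to j \in \C{G}$ gives $k \in \ansub{\C{G}}{j}$); the clause then yields that $k \to j$ is definitely visible in $\C{P}^*$, the DPAG obtained from $\C{P}$ by orienting the $k$--$j$ edge as $k \to j$.

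The rest of the argument is where the real work lies. First I would check that, when $j \notin \ansub{\C{G}}{k}$ (i.e.\ $k$ and $j$ lie in different strongly connected components), $\C{P}^*$ still contains $\C{G}$: the only altered edge is $k$--$j$, and the marks $k \to j$ are consistent with $k \in \ansub{\C{G}}{j}$ and $j \notin \ansub{\C{G}}{k}$, while all other marks are those of $\C{P}$. When instead $k \in \sccsub{\C{G}}{j}$, I would re-choose $k$ as the last vertex of the path $w_0,\dots,w_m$ lying outside $\sccsub{\C{G}}{j}$, using the portion of the path inside $\sccsub{\C{G}}{j}$ to re-exhibit the required inducing path between $k$ and $j$ and possibly directed path from $i$ to $j$ through $k$ (treating separately the degenerate case in which $i$ itself already has a child in $\sccsub{\C{G}}{j}$). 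In every case one reduces to $j \notin \ansub{\C{G}}{k}$ with $\C{P}^*$ containing $\C{G}$, so Lemma~\ref{lemm:Zhang_invisible}, applied to $\C{P}^*$ and $\C{G}$, says there is no inducing walk between $k$ and $j$ in $\C{G}$ that is into $k$. The contradiction is then obtained by exhibiting exactly such a walk, built from the ingredients at hand: the directed path $i \to \dots \to k$, an inducing path between $i$ and $j$ in $\C{G}$ --- which by the first clause of~(ii) may be assumed to be out of $i$, and which, unless its vertex adjacent to $j$ lies in $\sccsub{\C{G}}{j}$, is into $j$ --- and the strongly connected component structure of $\C{G}$, splicing these and locally re-routing through components (and through bidirected edges where available) so that the resulting $k$--$j$ walk is into $k$, has all colliders ancestral to $\{k,j\}$, and has every non-collider pointing only within its own component. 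This contradiction forces $i \to j \in \C{G}$.

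I expect the main obstacle to be precisely this last construction. The obvious candidate for an inducing walk between $k$ and $j$ into $k$ --- the reverse of a directed path --- fails the $\sigma$-separation non-collider condition exactly at strongly-connected-component boundaries, so one has to arrange the splice so that every junction vertex ends up either a genuine collider ancestral to $\{k,j\}$ or a non-collider pointing within its own component, which demands a somewhat delicate case analysis of the inducing path near $j$; the re-choice of $k$ when $k \in \sccsub{\C{G}}{j}$ adds further bookkeeping. Everything else --- case~(i), ruling out $i \oto j$, and the reductions above --- is routine by comparison.
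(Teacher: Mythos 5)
Your case~(i) argument is correct and matches the paper's, and your observations that $i \oto j$ is ruled out by the first clause of~(ii) and that the inducing path between $i$ and $j$ must have an internal vertex are also fine. But the core of case~(ii) has two genuine gaps. First, the hypothesis of clause~(ii) only constrains vertices $k$ for which there is a length-two possibly directed path $i \sts k \sts j$ in $\C{P}$, i.e., $k$ must be adjacent in $\C{P}$ to \emph{both} $i$ and $j$ with the appropriate marks. Your chosen $k = w_{m-1}$ (or the re-chosen ``last vertex outside $\sccsub{\C{G}}{j}$'') is an interior vertex of a directed path in $\C{G}$, and nothing forces $i$ and $k$ to be adjacent in $\C{P}$: the directed path $w_0 \to \dots \to w_{m-1}$ is generally not an inducing path, so no adjacency in $\C{P}$ follows. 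Without that adjacency you cannot invoke the definite visibility of $k \to j$ in $\C{P}^*$, and the argument stalls. Second, your intended contradiction is to exhibit an inducing walk between $k$ and $j$ in $\C{G}$ that is into $k$, but you never construct it, and for your choice of $k$ such a walk need not exist at all: a vertex sitting on a directed path from $i$ to $j$ carries no guarantee of having any into-$k$ inducing walk to $j$. The ``splicing and re-routing'' you defer to is not a bookkeeping issue; it is the missing idea.

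The paper avoids both problems by never working with a directed path from $i$ to $j$ in case~(ii). Instead it takes an inducing walk $\pi$ between $i$ and $j$ that is into $j$ (Lemma~\ref{lemm:inducing_walk_into}) and out of $i$ (by the first clause of~(ii)), and chooses $k$ on $\pi$ itself: either the collider on $\pi$ closest to $i$, or, if $\pi$ has no colliders, the vertex next to $i$. In the first case the initial segment of $\pi$ is a directed subwalk $i \to \dots \to k$ whose interior non-colliders inherit the same-strongly-connected-component property from $\pi$, so it \emph{is} an inducing walk, giving $i \sts k$ in $\C{P}$; and the remainder of $\pi$ is automatically an inducing walk between $k$ and $j$ into $k$ (since $k$ is a collider) --- exactly the walk whose nonexistence Lemma~\ref{lemm:Zhang_invisible} guarantees in $\C{P}^*$. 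In the second case $k \in \sccsub{\C{G}}{j}$, and the directed path from $j$ back to $k$ inside that strongly connected component supplies the into-$k$ inducing walk. In both cases the adjacency to $i$ and the contradiction are immediate, with no splicing. You would need to restructure your case~(ii) along these lines for the proof to go through.
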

\begin{proof}
  (i) Suppose $i \to j$ in $\C{P}$. We prove the contrapositive. Assume $i \to j$ is absent from $\C{G}$. Because $i \in \ansub{\C{G}}{j}$ by assumption, there must be a directed path from $i$ to $j$ in $\C{G}$ that does not contain the edge $i \to j$. This corresponds with a possibly directed path in $\C{P}$ that avoids the edge $i \to j$.

  (ii) Suppose $i \to j$ in $\C{P}$. 
  There must be an inducing walk $\pi$ between $i$ and $j$ in $\C{G}$ that is into $j$ by Lemma~\ref{lemm:inducing_walk_into}.
  Each collider on $\pi$ is ancestor of $j$ (because it is ancestor of $i$ or $j$ by definition, and $i$ is ancestor of $j$).
  By assumption (or by Lemma~\ref{lemm:Zhang_invisible} if $i \to j$ is definitely visible in $\C{P}$), this inducing walk must be out of $i$.

  We now show that $\pi$ cannot contain any colliders under the assumptions made.
  For the sake of contradiction, assume that $\pi$ contained one or more colliders.
  Denote the collider closest to $i$ on $\pi$ by $k$.
  Since $\pi$ is out of $i$ and $k$ is the collider on $\pi$ closest to $i$, $\pi$ must start as a directed walk $i \to \dots \to k$.
  This is an inducing walk between $i$ and $k$, and since $i \in \ansub{\C{G}}{k}$,
  it corresponds with a possibly directed path $i \sts k$ in $\C{P}$.
  The subwalk of $\pi$ between $k$ and $j$ is an inducing walk in $\C{G}$ between $k$ and $j$ that is both into $k$ and into $j$, since each collider on it is ancestor of $j$ and each non-endpoint non-collider only points to nodes in the same strongly connected component.
  Since $k \in \ansub{\C{G}}{j}$, this corresponds with a possibly directed path $k \sts j$ in $\C{P}$.
  It also implies that $\C{P}^*$, obtained from $\C{P}$ by replacing the edge between $k$ and $j$ by the directed edge $k \to j$, contains $\C{G}$. 
  By Lemma~\ref{lemm:Zhang_invisible} applied to $\C{P}^*$, $k \to j$ cannot be definitely visible in $\C{P}^*$.
  We have arrived at a contradiction with the assumption.

  Hence, the inducing walk $\pi$ cannot contain any colliders.
  If it consisted of multiple edges, it would be of the form $i \to k' \to \dots \to j$, where now $k' \ne j$ is the vertex on $\pi$ next to $i$, and all non-endpoint noncolliders would point to nodes in the same strongly connected component.
  Hence $k'$ and $j$ would lie in the same strongly connected component of $\C{G}$.
  Again, note that this results in a possibly directed path $i \sts k' \sts j$ in $\C{P}$, and means that $\C{P}^*$ obtained from $\C{P}$ by replacing the edge between $k'$ and $j$ by the directed edge $ \to j$, contains $\C{G}$.
  By Lemma~\ref{lemm:Zhang_invisible} applied to $\C{P}^*$, there exists no inducing walk in $\C{G}$ between $k'$ and $j$ that is into $k'$, because $k' \to j$ must be definitely visible in $\C{P}^*$ by assumption.
  This contradicts the existence of a directed path $k' \ot \dots \ot j$ in $\C{G}$.
  
  Hence, the inducing walk $\pi$ in $\C{G}$ must consist of a single edge, and is necessarily of the form $i \to j$.
\end{proof}
As an example, the edge $X_2 \to X_3$ in the DPAG in Figure~\ref{fig:acyclification} cannot be identified
as being present in $\C{G}$ because both conditions are not satisfied: (i) because of the possibly directed path 
$X_2 \to X_4 \ctc X_3$, (ii) because of the same path where the edge $X_4 \to X_3$ would be possibly invisible if oriented
in that way. Also the edge $X_1 \to X_3$ in the DPAG cannot be identified as being present in $\C{G}$. The edge $X_6 \to X_7$ in
the DPAG, on the other hand, is identifiably present in $\C{G}$.

\subsection{IDENTIFIABLE NON-CYCLES}

Strongly connected components in the DMG end up as a specific pattern in the DPAG.
This can be used as a sufficient condition for identifying the absence of certain cyclic causal relations in a complete DPAG.
\begin{proposition}\label{prop:noncycles}
  Let $\C{G}$ be a DMG and denote by $\C{P} = \PAGFCI(\IM_\sigma(\C{G}))$ the corresponding complete DPAG output by FCI.
  Let $i \ne j$ be two nodes in $\C{P}$.
  If $j \in \sccsub{\C{G}}{i}$, then $i \ctc j$ in $\C{P}$, and for all nodes $k$:
  $k \to i$ in $\C{P}$ iff $k \to j$ in $\C{P}$, and $k \oto i$ in $\C{P}$ iff $k \oto j$ in $\C{P}$, and $k \cto i$ in $\C{P}$ iff $k \cto j$ in $\C{P}$.
\end{proposition}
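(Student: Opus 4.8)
I would break the statement into three pieces: the edge $i\ctc j$ itself, the transfer of ``there is an arrowhead at $i$'' to ``there is an arrowhead at $j$'', and the matching of the mark at the other endpoint $k$.

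First I would pin down $i\ctc j$. Since $i,j$ lie in a common strongly connected component of $\C{G}$, a directed path from $i$ to $j$ staying inside that component is an inducing path in the sense of Definition~\ref{def:inducing_path} (no colliders, and every internal non-collider points only within its own strongly connected component), so $i$ and $j$ are adjacent in $\C{P}$ because $\C{P}$ contains $\C{G}$ (Theorem~\ref{theo:fci_sound_complete}(\ref{theo:fci_sound_complete_sound})). Since moreover $i\in\ansub{\C{G}}{j}$ and $j\in\ansub{\C{G}}{i}$, Definition~\ref{def:dpag_contains_dmg}(ii) rules out an arrowhead at either end, and the only DPAG edge with no arrowheads is $\ctc$. (The same argument gives $i'\ctc j'$ for every distinct pair $i',j'\in\sccsub{\C{G}}{i}$.)

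Next I would transfer arrowheads. Given $k\sto i$ in $\C{P}$, Lemma~\ref{lemm:inducing_walk_into} gives an inducing walk $\pi$ in $\C{G}$ between $k$ and $i$ that is into $i$; concatenating $\pi$ with a directed path from $i$ to $j$ inside $\sccsub{\C{G}}{i}$ makes $i$ a legal non-collider of an inducing walk (its unique outgoing edge stays in its strongly connected component), turns colliders that were ancestors of $i$ into ancestors of $j$, and leaves colliders that were ancestors of $k$ untouched, so it is an inducing walk in $\C{G}$ between $k$ and $j$ that is into $j$. Hence $k$ and $j$ are adjacent in $\C{P}$; and if in addition $k\oto i$ in $\C{P}$ then (the $k$-end of $\pi$ being untouched) the new walk is into both ends. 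Also the $k$--$j$ edge cannot be $j\to k$: that would force $j\in\ansub{\C{G}}{k}$ by Definition~\ref{def:dpag_contains_dmg}(iii), hence $i\in\ansub{\C{G}}{k}$, contradicting $i\notin\ansub{\C{G}}{k}$, which holds by Definition~\ref{def:dpag_contains_dmg}(ii) applied to $k\sto i$. The symmetric statements hold with $i$ and $j$ interchanged, so (modulo matching the mark at $k$) arrowheads at $i$ correspond to arrowheads at $j$, and the case where $k$ is adjacent to only one of $i,j$ is already settled.

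Finally, matching the marks. Because $\C{P}$ is the complete DPAG (Theorem~\ref{theo:fci_sound_complete}), for adjacent $k$ and $i$ the edge has an arrowhead at $i$ iff $i\notin\ansub{\tilde{\C{G}}}{k}$ for every $\tilde{\C{G}}$ that is $\sigma$-Markov equivalent to $\C{G}$, and, given that, a tail/arrowhead/circle at $k$ according as $k\in\ansub{\tilde{\C{G}}}{i}$ holds for all / for none / for some-but-not-all such $\tilde{\C{G}}$. I would translate each of these universally- and existentially-quantified ancestral conditions into the (non-)existence of a (possibly) directed path in $\C{P}$ using Propositions~\ref{prop:cdpag_ancestors_dmg} and~\ref{prop:cdpag_nonancestors} together with the completeness of FCI, and then use that the edge $i\ctc j$ is not into either of $i,j$, so it can be prepended to, or deleted from, such a path: a path of the relevant kind from/to $i$ exists iff one from/to $j$ does. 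Combining these path equivalences with the mark characterisation yields the three biconditionals. The hard part is this last step: the inducing-walk surgery of the previous paragraph is clean only because $\sccsub{\C{G}}{i}$ is strongly connected \emph{in $\C{G}$} (it is broken up in every acyclification, which is exactly why one cannot just relabel $i$ and $j$ and push the symmetry through FCI), whereas matching the \emph{tail}/definite-ancestor marks needs the path characterisation of the invariant ancestral relations in a complete DPAG to be tight — and the completeness of the relevant definite-ancestor criterion is precisely what the footnote flags as not fully settled for \citet{Roumpelaki++_UAIWS_16}, so this direction requires its own careful argument (or an appeal to a completeness result established elsewhere in the paper).
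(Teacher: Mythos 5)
Your first step is correct and reaches the same conclusion as the paper by a slightly different route: the paper obtains the two circle marks on the $i$--$j$ edge by exhibiting one acyclification with $i\to j$ and another with $i\ot j$, whereas you use soundness plus mutual ancestry inside $\sccsub{\C{G}}{i}$ to exclude arrowheads at both ends; both arguments are valid. The second step, however, has a genuine gap. Your inducing-walk surgery correctly shows that $k$ and $j$ are adjacent in $\C{P}$ and that the edge cannot be $j\to k$, but it does not place an arrowhead at $j$: the edge could still be $k\ctc j$ or $j\cto k$. You also cannot close this via arrowhead completeness, because ``$j\notin\ansub{\tilde{\C{G}}}{k}$ for every $\sigma$-Markov-equivalent $\tilde{\C{G}}$'' does not follow from the corresponding statement for $i$ --- the strongly connected component structure is \emph{not} invariant across the equivalence class (this is exactly Proposition~\ref{prop:noncycles2b}), so some $\tilde{\C{G}}$ may contain $i\to j$ without $j\to i$ and then $j\in\ansub{\tilde{\C{G}}}{k}$ is not excluded. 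What is needed, and what the paper uses, is the purely graphical property of complete PAGs in Lemma~\ref{lemm:Zhang_A1}: $k\sto i\ctc j$ forces $k\sto j$, and the second half of that lemma is what separates the $k\oto j$ case from the $k\cto j$ and $k\to j$ cases.

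The third step is where the real work lies, and your proposal does not deliver it. Matching a \emph{tail} at $k$ requires showing that whenever FCI orients $k\to i$ it also orients $k\to j$; your plan is to characterize the invariant tail by ``$k\in\ansub{\tilde{\C{G}}}{i}$ for all equivalent $\tilde{\C{G}}$'' and then translate that into a path condition in $\C{P}$ that is stable under appending or deleting the edge $i\ctc j$. That translation needs the path criterion for invariant ancestral relations to be \emph{complete}, which is precisely what the paper's footnote flags as unproven (the Roumpelaki et al.\ completeness proof is flawed, and Proposition~\ref{prop:cdpag_ancestors_dmg} is only a soundness statement). Moreover the surgery itself breaks down: appending $i\ctc j$ to a directed path from $k$ to $i$ does not yield a directed path to $j$, and it can destroy the ``uncovered'' property since, by the very first part of the proposition, nodes adjacent to $i$ tend also to be adjacent to $j$. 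The paper avoids all of this by a direct induction over FCI's tail-orientation rules $\C{R}1$, $\C{R}4$a, $\C{R}8$a, $\C{R}9$ and $\C{R}10$ (exploiting a particular execution order of these rules), showing for each that an application orienting $k\to i$ entails a corresponding application orienting $k\to j$. That case analysis is the core of the proof and would need to be supplied.
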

\begin{proof}
Since no pair of nodes within a strongly connected component of $\C{G}$ can be $\sigma$-separated,
each strongly connected component of $\C{G}$ ends up as a fully-connected component
in $\C{P}$. For two nodes $i \ne j$ in the same strongly connected component of $\C{G}$, there 
exists an acyclification of $\C{G}$ in which $i \to j$ and another one in which $i \ot j$, and
hence the edge between $i$ and $j$ in $\C{P}$ must be oriented as $i \ctc j$.
From Lemma~\ref{lemm:Zhang_A1} it then directly follows that for any third node $k$,
$k \sto i \in \C{P}$ if and only if $k \sto j \in \C{P}$.
If $k \oto i \in \C{P}$, then $k \cto j \notin \C{P}$, otherwise $i \oto k \cto j$ would violate Lemma~\ref{lemm:Zhang_A1}.
Also, if $k \oto i \in \C{P}$, then $k \to j \notin \C{P}$, otherwise $k \to j \ctc i$ would violate Lemma~\ref{lemm:Zhang_A1}.

Hence, we have shown that for all $i \ne j$ with $i \in \sccsub{\C{G}}{j}$, $i \ctc j \in \C{P}$ and for all $k$:
\begin{equation}\label{eq:star}
  \begin{cases}
    k \sto i \in \C{P} \iff k \sto j \in \C{P}, \text{ and} & \\
    k \oto i \in \C{P} \iff k \oto j \in \C{P}. &
  \end{cases}\tag{$*$}
\end{equation}
Note that this will already hold for the DPAG $\tilde{\C{P}}$ constructed by the first (arrowhead
orientation) stage of FCI, i.e., after rules $\C{R}$0--$\C{R}$4 of the FCI algorithm (the only ones that can orient
arrow heads) have been completed.
It remains to show that if $k \to i$ in $\C{P}$ for a third node $k$, then $k \to j$ in $\C{P}$ as well (ruling out $k \cto j$).
We will consider all FCI rules that can orient a tail at $k \to i$ in the absence of selection bias,
i.e., FCI rules $\C{R}$1, $\C{R}$4a, $\C{R}$8a, $\C{R}$9, and $\C{R}$10 in \citep{Zhang2008_AI}, and show that each of them implies also a tail at $k$ on the edge to $j$, i.e., $k \to j$ (rules $\C{R}$5--$\C{R}$7 and $\C{R}$8b can be ignored in the absence of selection bias). 
Below we use $\C{P'}$ to denote an intermediate DPAG obtained so far by FCI during the orientation stage, which ultimately results in the completely oriented DPAG $\C{P}$.

We will use the fact that there is a natural ordering in these orientation rules: 
$\C{R}1$ and $\C{R}$4a are part of the arrowhead orientation stage and complete first. 
Then, all instances of $\C{R}9$ can be executed, after which $\C{R}8$a and $\C{R}10$ are 
triggered repeatedly until completion. The latter follows from the
fact that rules $\C{R}$8a (or $\C{R}$8b) and $\C{R}$10 can only orient an
edge $x \cto y$ into $x \to y$, which can never introduce a
new instance that satisfies the pattern of $\C{R}9$ but did not already
satisfy the pattern of $\C{R}$9 before. We will assume (without loss of
generality) that FCI makes use of this particular ordering in the proof below.


Rule $\C{R}$1: \emph{if $m \sto k \cts i$ in $\C{P'}$ and $m$ and $i$ are not adjacent, then orient $m \sto k \to i$}.
Suppose that $k \cts i$ in $\C{P}'$ can been oriented by $\C{R}$1.
By \eref{eq:star}, this means that $k \sto j$ will be in $\tilde{\C{P}}$.
If $k \cto j$ would have remained unoriented in $\tilde{\C{P}}$, then by Lemma~\ref{lemm:Zhang_A1} applied to $m \sto k \cto j$, 
there must be an edge $m \sto j$ in $\tilde{\C{P}}$. 
But then there must also be an edge $m \sto i$ in $\tilde{\C{P}}$, again by Lemma~\ref{lemm:Zhang_A1}. 
This contradicts that $m$ and $i$ are not adjacent in $\C{P}'$.
Hence $k \to j$ must have been oriented in $\tilde{\C{P}}$.

Rule $\C{R}$4a: \emph{if $\pi = \langle x, m_1, \dots, m_n, k, i \rangle$ is a discriminating path for $k$ in $\C{P}'$ and $k \cts i$ is in $\C{P'}$, and if $k \in \mathrm{SepSet}(x,i)$, then orient $k \to i$}.
First note that $j$ cannot be part of the discriminating path $\pi$, as $i \ctc j$ in $\tilde{\C{P}}$. 
By \eref{eq:star}, all nodes $m_s$ and $k$ also have an edge into $j$ in $\tilde{\C{P}}$ with either a tail or circle mark at the other end. 
  So we have $x \sto m_1 \to j$ in $\tilde{\C{P}}$ or $x \sto m_1 \cto j$ in $\tilde{\C{P}}$. 
If $x$ and $j$ were adjacent in $\tilde{\C{P}}$, then the edge between them must be of the form $x \sto j$ (either because of Lemma~\ref{lemm:Zhang_A1} if $m_1 \cto j$, or because of FCI rule $\C{R}$2b if $m_1 \to j$), which would imply that also $x \sto i$ in $\tilde{\C{P}}$ by \eref{eq:star}, contradicting the antecedent of rule $\C{R}$4a.
Now, by induction each edge between $m_s$ and $j$ (for $s=1,\dots,n$) will have been oriented as $m_s \to j$ in $\tilde{\C{P}}$. 
Indeed, first $\C{R}$1 can orient $x \sto m_1 \to j$, which means that $x \sto m_1 \ots m_2 \sto j$ is a discriminating path for $m_2$. 
Suppose $x \sto m_1 (\oto \dots m_{s-1}) \ots m_s \sts j$ is a discriminating path for $m_s$ with $s < n$.
Now $m_s \oto j$ cannot be in $\tilde{\C{P}}$, because if it were, $m_s \oto i$ would also be in $\tilde{\C{P}}$ by \eref{eq:star}, contrary the antecedent of $\C{R}$4a. 
Hence $m_s \in \mathrm{SepSet}(x,j)$, and so the edge between $m_s$ and $j$ can be oriented as $m_s \to j$ by $\C{R}$4a,
which means that $x \sto m_1 (\oto \dots m_s) \ots m_{s+1} \sts j$ must be a discriminating path for $m_{s+1}$.
Hence, $\pi' = \langle x, m_1, \dots, m_n, k, j \rangle$ is also a discriminating path for $k$ in $\tilde{\C{P}}$, and again $k \oto j$ cannot be in $\tilde{\C{P}}$ (otherwise $k \oto i$ in $\tilde{\C{P}}$), and so the edge between $k$ and $j$ can be oriented
by $\C{R}$4a, resulting in $k \to j$ in $\tilde{\C{P}}$.

Rule $\C{R}$9: \emph{if $k \cto i$ in $\C{P}'$, and $\pi = \langle k, m_1, \dots, m_n, i \rangle$ is an uncovered possibly directed path in $\C{P}'$ from $k$ to $i$ such that $m_1$ and $i$ are not adjacent in $\C{P}'$, then orient $k \to i$}.

First note that all nodes on $\pi$ must be ancestor in $\C{G}$ of $i$, and $i$ must be non-ancestor in $\C{G}$ of all other nodes on $\pi$.
Indeed, for any DMAG $\C{H}$ contained in $\C{P}'$, each node $m_l$ must be a non-collider on $\pi$ (all unshielded colliders have already been oriented by rule $\C{R}$0 in $\C{P}'$ by assumption, and there cannot be any on $\pi$ since $\pi$ is possibly directed). 
If $m_l \to m_{l+1}$ in $\C{H}$, then there must be a directed path $m_l \to m_{l+1} \to \dots \to m_n \to i$ in $\C{H}$;
if $m_l \to m_{l-1}$ in $\C{H}$, then there must be a directed path $m_l \to m_{l-1} \to \dots \to m_1 \to k \to i$ in $\C{H}$.
In both cases, $m_l \in \ansub{\C{H}}{i}$, and $i \notin \ansub{\C{H}}{m_l}$.
Since this holds for any DMAG $\C{H}$ contained in $\C{P'}$, it also holds for all DMAGs induced by acyclifications of $\C{G}$.
Hence, by Proposition~\ref{prop:acyclification}, $m_l \in \ansub{\C{G}}{i}$, and $i \notin \ansub{\C{G}}{m_l}$.
This also implies (by the arrowhead completeness of FCI) that there must be an arrowhead on the edge $m_n \sto i$ in $\C{P}'$. 

By \eref{eq:star}, $k \sto j$ will be in $\C{P}$. Both $m_1$ and $m_{n-1}$ are not adjacent to $j$ in $\C{P}$. This follows from the fact that $j$ cannot be ancestor of either of these nodes $m_1, m_{n-1}$ in $\C{G}$, for then $i$ would also be ancestor in $\C{G}$ of that node. Therefore an edge between $m_1$ (or $m_{n-1}$) and $j$ in $\C{P}$ would have to be into $j$ by the arrowhead completeness of FCI, and so by \eref{eq:star} there would also be an edge $m_1 \sto i$ (or $m_{n-1} \sto i$) in $\C{P}$, contrary the antecedent of $\C{R}$9. 
By \eref{eq:star}, also $m_n \sto j$ in $\C{P}'$. 
This edge cannot be $m_n \oto j$, because that would mean that also $m_i \oto i$ in $\C{P}'$ by \eref{eq:star}, a contradiction.
Therefore $\pi' = \langle k, m_1, \dots, m_n, j \rangle$ is also an uncovered possibly directed path from $k$ to $j$ in $\C{P}'$, and $m_1$ and $j$ are not adjacent in $\C{P}'$, so $k \cto j$ can be oriented as $k \to j$ by $\C{R}$9.

Finally, the two remaining rules, $\C{R}$8a and $\C{R}$10, will be considered together in order to be able to make use of a proof by induction. We will assume that in $\C{P}'$, rules $\C{R}$1, $\C{R}$4a and $\C{R}$9 have already been exhaustively applied.

Rule $\C{R}$8a: \emph{if $k \cto i$ and $k \to m \to i$ in $\C{P'}$, then orient $k \to i$.}\\
Rule $\C{R}$10: \emph{if $k \cto i$ and $u_1 \to i \ot u_2$ in $\C{P'}$, $\pi_1 = \langle k, m_1, \dots, u_1 \rangle$ is an uncovered possibly directed path from $k$ to $u_1$ in $\C{P}'$ and $\pi_2 = \langle k, m_2, \dots, u_2 \rangle$ is an uncovered possibly directed path from $k$ to $u_2$ in $\C{P}'$, such that $m_1$ and $m_2$ are distinct and not adjacent, then orient $k \to i$.}

By \eref{eq:star}, if rule $\C{R}$8a triggers in $\C{P}'$, then also $k \sto j$ and $m \sto j$ in $\C{P}$.
Furthermore, if rule $\C{R}$10 triggers in $\C{P}'$, then also $k \sto j$, $u_1 \sto j$ and $u_2 \sto j$ in $\C{P}$.
As the arrowhead stage has already completed by the time $\C{R}$8a and $\C{R}$10 are executed, that means these will be present as edges into $j$ in $\C{P'}$ as well. 

We now proceed by contradiction. 
Assume, for the sake of contradiction, that $\C{R}$8a or $\C{R}$10 triggers to orient some $k \cto i$ in $\C{P}'$ as $k \to i$, but the corresponding edge $k \cto j$ (with $j \in \sccsub{\C{G}}{i}$) remains unoriented in $\C{P}$.
Consider the first edge $k \to i$ for which this situation occurs (in the sequence of orientations performed by FCI during this last part of the orientation phase).

If $k \cto i$ can be oriented by rule $\C{R}$8a, then the edge $m \to i$ is already present in $\C{P}'$ at that point. 
If the tail on that edge was oriented by one of the rules $\C{R}$1, $\C{R}$4a or $\C{R}$9, also $m \to j$ will have been oriented in $\C{P}'$ at this point, as we have shown. 
Otherwise, the tail of $m \to i$ must have been oriented by rule $\C{R}$8a or rule $\C{R}$10. 
By assumption, the corresponding edge $m \cto j$ does not remain unoriented in $\C{P}$, and therefore after finitely many applications of rules $\C{R}$8a and $\C{R}$10, it will have been oriented as $m \to j$ in $\C{P}'$. 
In both cases, the edge $m \to j$ will be present in $\C{P}'$ at some point, and then rule $\C{R}$8a can also be used to orient $k \cto j$ as $k \to j$, contradicting the assumption that $k \cto j$ remains unoriented in $\C{P}$. 

Hence $k \cto i$ must have been oriented by rule $\C{R}$10. 
The edges $u_1 \to i$ and $u_2 \to i$ must then already be present in $\C{P}'$ at that point.
By similar reasoning as before, we conclude that also $u_1 \to i$ and $u_2 \to i$ must be present in $\C{P}'$ at some point, and then rule $\C{R}$10 can also be used to orient $k \cto j$ as $k \to j$, again contradicting the assumption.

Therefore, it cannot happen that $\C{R}$8a or $\C{R}$10 can be used to orient $k \cto i$ as $k \to i$, but that the corresponding edge $k \cto j$ (with $j \in \sccsub{\C{G}}{i}$) will not get oriented as $k \to j$ in $\C{P}$.

Summarizing, whenever FCI orients a tail mark at $k$ on an edge into $i$, it will also orient all tail marks at $k$ on the edges into $j$ for all $j \in \sccsub{\C{G}}{i}$.
\end{proof}
Hence, any pair of nodes that does not fit this pattern cannot be part of a cycle in $\C{G}$.
For example, in the complete DPAG in Figure~\ref{fig:acyclification}, only the nodes in $\{X_3,X_4,X_5,X_6\}$ might
be part of a cycle. For all other pairs of nodes, it follows from Proposition~\ref{prop:noncycles} that
they cannot be part of a cycle.
This sufficient condition is also necessary:
\begin{proposition}\label{prop:noncycles2b}
  Let $\C{G}$ be a DMG and denote by $\C{P} = \PAGFCI(\IM_\sigma(\C{G}))$ the corresponding complete DPAG output by FCI.
  Let $i \ne j$ be two nodes in $\C{P}$.
  If there is an edge $i \ctc j$ in $\C{P}$, and all nodes $k$ for which $k \sto i$ is in $\C{P}$ also have an edge of the same type $k \sto j$ (i.e., the two edge marks at $k$ are the same) in $\C{P}$, then there exists a DMG $\tilde{\C{G}}$ with $j \in \sccsub{\tilde{\C{G}}}{i}$ that is $\sigma$-Markov equivalent to $\C{G}$, but also \typo{a DMG}{an ADMG} $\C{H}$ with $j \notin \sccsub{\C{H}}{i}$ that is $\sigma$-Markov equivalent to $\C{G}$. 
\end{proposition}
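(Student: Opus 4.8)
\begin{proof*}[Proof sketch.]
A witness with $j \notin \sccsub{\C{H}}{i}$ is immediate: take $\C{H} := \C{G}^{\acy}$. Since $\C{G}^{\acy}$ is an ADMG it is acyclic, so $\sccsub{\C{H}}{i} = \{i\}$ and hence $j \notin \sccsub{\C{H}}{i}$; and $\IM_\sigma(\C{H}) = \IM_\sigma(\C{G})$ by Proposition~\ref{prop:Markov_equivalence_acyclification}, so $\C{H}$ is $\sigma$-Markov equivalent to $\C{G}$.

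For the other witness, my plan is to first produce a DMAG $\C{A}$ representing $\C{P}$ in which $i$ and $j$ are \emph{twins}: they have the same neighbours, every edge $k$--$i$ has the same type as the edge $k$--$j$, and the edge between $i$ and $j$ is the bidirected edge $i \oto j$. Given such an $\C{A}$, I would let $\tilde{\C{G}}$ be $\C{A}$ with the two directed edges $i \to j$ and $j \to i$ added. Then $i$ and $j$ are mutually ancestral in $\tilde{\C{G}}$, so $j \in \sccsub{\tilde{\C{G}}}{i}$; and since $\C{A}$ is ancestral and contains $i \oto j$, neither of $i,j$ is an ancestor of the other in $\C{A}$, so $\{i,j\}$ is the only non-singleton strongly connected component of $\tilde{\C{G}}$. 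A check against Definition~\ref{def:acyclification} should then show that the canonical acyclification $\tilde{\C{G}}^{\acy}$ equals $\C{A}$: its only non-singleton component is $\{i,j\}$, whose intra-component edge is $i \oto j$; every directed or bidirected edge of $\C{A}$ with an arrowhead at $i$ or at $j$ is ``copied to all nodes of $\{i,j\}$'' by Definition~\ref{def:acyclification}(\ref{def:acyclification_inter_scc}) without introducing anything new, precisely because $i$ and $j$ are twins; and all other edges of $\C{A}$ are reproduced verbatim. Therefore $\IM_\sigma(\tilde{\C{G}}) = \IM_d(\tilde{\C{G}}^{\acy}) = \IM_d(\C{A}) = \IM_d(\C{G}^{\acy}) = \IM_\sigma(\C{G})$, using Proposition~\ref{prop:Markov_equivalence_acyclification} and the fact that $\C{A}$, representing $\C{P} = \PAGFCI(\IM_\sigma(\C{G}))$, is $d$-Markov equivalent to $\C{G}^{\acy}$ by the soundness and Markov completeness of FCI (Theorem~\ref{theo:fci_sound_complete}). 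This would finish the proof.

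What remains, and what I expect to be the real work, is constructing the twin DMAG $\C{A}$; this is where the hypothesis enters. The stated condition records that the incoming marks at $i$ and at $j$ agree in $\C{P}$; combined with $i \ctc j$ and Lemma~\ref{lemm:Zhang_A1} I would upgrade this to full $\{i,j\}$-symmetry of $\C{P}$ --- the same pattern that Proposition~\ref{prop:noncycles} shows is forced when $i$ and $j$ lie in a common component. I would then run the standard construction of a DMAG representing $\C{P}$ (see \citet{Zhang2008_AI}) but resolve circle marks symmetrically: complete the $i$--$j$ edge to $i \oto j$, and whenever a circle is turned into a tail or an arrowhead on an edge incident to $i$, resolve the matching circle on the corresponding edge incident to $j$ in the same way, and vice versa. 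The delicate point is to verify that this symmetric completion still yields a valid DMAG representing $\C{P}$ --- in particular that the choice $i \oto j$ is admissible and that it creates no (almost-)directed cycle and no collider (unshielded or along a discriminating path) beyond those already dictated by $\C{P}$ --- and it is exactly the $\{i,j\}$-symmetry of $\C{P}$ that should make this go through. Since the first witness has $j \notin \sccsub{\C{H}}{i}$ and the second has $j \in \sccsub{\tilde{\C{G}}}{i}$, membership of $j$ in the strongly connected component of $i$ is not determined by the $\sigma$-Markov equivalence class of $\C{G}$, as claimed.
\end{proof*}
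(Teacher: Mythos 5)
Your first witness ($\C{H} := \C{G}^{\acy}$) is correct and even a little cleaner than the paper's, and your overall strategy for the second witness --- pick a representative DMAG $\C{A}$ of $\C{P}$, add the reverse edge(s) to create the two-cycle on $\{i,j\}$, and verify that $\C{A}$ is an acyclification of the result --- is exactly the strategy the paper uses. The gap is in the one step you defer: the ``twin DMAG'' $\C{A}$. The requirement you impose (same neighbours and the same edge type on \emph{every} edge at $i$ and at $j$) is strictly stronger than the hypothesis of the proposition, which only constrains edges \emph{into} $i$ (i.e., edges $k \sto i$), and it can genuinely fail. Concretely, take $\C{G}$ with edges $i \to j$, $j \to i$, $i \to m$. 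Then $\C{G}^{\acy} = \{i \oto j,\ i \to m\}$, the complete DPAG is $j \ctc i \ctc m$ with $j$ and $m$ non-adjacent, and the hypothesis of the proposition holds vacuously for the pair $(i,j)$ (no edge in $\C{P}$ has an arrowhead at $i$). No DMAG contained in $\C{P}$ can have $i$ and $j$ as twins, because adjacencies are fixed by $\C{P}$ and $i \sim m$ while $j \not\sim m$. So the construction you are relying on does not exist in general, and the ``delicate point'' you flag is not merely delicate but unresolvable in the form you state it.

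The repair is to notice that Definition~\ref{def:acyclification}(\ref{def:acyclification_inter_scc}) only duplicates edges \emph{into} a strongly connected component (directed edges $k \to i$ and bidirected edges $k \oto i$ with $k$ outside the component); edges out of the component are carried over verbatim. Hence for $\C{A}$ to be an acyclification of $\C{A} \cup \{i\to j, j \to i\}$ you only need the arrowhead-bearing edges at $i$ and at $j$ to match --- which is precisely what the hypothesis supplies --- and you need some admissible intra-pair edge, for which $i \to j$ works and is much easier to realize than $i \oto j$. This is what the paper does: it takes the arrowhead-augmented graph of $\C{P}$ (turning every $k \cto i$ into $k \to i$ and every $k \cto j$ into $k \to j$, so that the matching of circle-arrowhead edges in $\C{P}$ becomes a matching of directed edges in the MAG), orients the remaining circle component into a DAG with $i$ and $j$ as the top two roots so that the $i$--$j$ edge becomes $i \to j$ and no new arrowheads appear at $i$ or $j$, and then adds only $j \to i$. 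Your argument that $\{i,j\}$ is the unique non-trivial strongly connected component, and your concluding chain of Markov equivalences, are fine and would go through unchanged once $\C{A}$ is replaced by this weaker (and achievable) construction.
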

\begin{proof}
We first show that there is a DMAG $\C{H}$ that is $\sigma$-Markov equivalent to $\C{G}$ which has $i \to j$.
We construct $\C{H}$ by starting from the so-called arrowhead augmented graph \citep{Zhang2006} of $\C{P}$, in which each edge $x \cto y$ in $\C{P}$ is oriented as $x \to y$, followed by an orientation of the remaining circle component into an arbitrary DAG such that no unshielded colliders are introduced. 
By Lemma 4.3.6 in \citep{Zhang2006}, this procedure yields a DMAG $\C{H}$ in $\C{P}$ (note that we assumed no selection bias), and hence $\sigma$-Markov equivalent to $\C{G}$.
Orienting the circle component (which is chordal by Lemma 4.3.7 in \citep{Zhang2006}) can be achieved by choosing $i \to j$ as the top two root nodes in the partial ordering for the DAG, and propagating the Meek rules \citep{Meek1995PC} to orient the rest of the circle component into a DAG with no unshielded colliders (see also the discussion of ``Meek's Algorithm'' on pages 120--121 of \citep{Zhang2006}). 
The resulting DMAG $\C{H}$ has no other arrowheads at $i$ and $j$ other than the ones already present in $\C{P}$, in combination with the directed edge $i \to j$.

We now create a DMG $\tilde{\C{G}}$ out of $\C{H}$ by adding an additional edge $j \to i$, thereby creating a single non-trivial strongly connected component containing only the nodes $\{i,j\}$.
This follows from the fact that if there exists another node in $\sccsub{\tilde{\C{G}}}{i}$ as a result of adding the edge $j \to i$ to $\C{H}$, then there must now be a directed path $j \to i \to \dots \to k \to j$ in $\tilde{\C{G}}$. However, this edge $k \to j$ cannot be part of the original circle component containing $i \ctc j$ in $\C{P}$, for then by construction it would have been oriented as $j \to k$, and if $k \to j$ was already an invariant arrowhead in $\C{P}$ then by assumption there would also be an invariant arrowhead on $k \sto i$ in $\C{P}$, which would imply the existence of an (almost) directed cycle $i \to \dots \to k$ in combination with $k \sto i$ in the DMAG $\C{H}$, which is impossible by definition.

As final step in the proof we show that the DMAG $\C{H}$ qualifies as an acyclification of $\tilde{\C{G}}$.
By construction $\C{H}$ is a DMAG, and also an ADMG, with the same vertices as $\tilde{\C{G}}$. 
Since $j \in \sccsub{\tilde{\C{G}}}{i}$ and $i \to j$ is in $\C{H}$, criterion (\ref{def:acyclification_intra_scc}) of Definition~\ref{def:acyclification} is satisfied. 
Finally, note that for all other nodes $k \notin \sccsub{\tilde{\C{G}}}{i}$ by assumption, if $k \to i$ in $\C{P}$, then $k \to j$ is also in $\C{P}$, and similarly for $k \oto i$ and $k \oto j$, resp.\ $k \cto i$ and $k \cto j$. 
By the initial step of the construction of $\C{H}$ as the arrowhead augmented DMAG of $\C{P}$, any edge $k \cto i$ in $\C{P}$ is oriented as $k \to i$ in $\C{H}$. 
As a result, for any node $k \notin \sccsub{\tilde{\C{G}}}{i}$, $k \to i$ in $\C{H}$ iff $k \to j$ in $\C{H}$, and similarly, $k \oto i$ in $\C{H}$ iff $k \oto j$ in $\C{H}$. 
Since $\{i,j\}$ is the only non-trivial strongly connected component in $\tilde{\C{G}}$, criterion (\ref{def:acyclification_inter_scc}) of Definition~\ref{def:acyclification} is also satisfied. 
This proves that DMAG $\C{H}$ indeed qualifies as an acyclification of $\tilde{\C{G}}$. 

This means that $\tilde{\C{G}}$ is $\sigma$-Markov equivalent to $\C{G}$, and hence, both $\tilde{\C{G}}$ and $\C{H}$ satisfy all properties promised in the claim of the proposition. 
\end{proof}
In other words, under the conditions of this proposition, it is not identifiable from $\C{P}$ alone whether $j$ and $i$ are part of a causal cycle.

\section{EXTENSIONS FOR BACKGROUND KNOWLEDGE}

In this section, we discuss extensions of our results to situations in which available causal background knowledge is taken into account by causal discovery algorithms.

Assume that we have certain background knowledge, formalized as a Boolean function $\Psi$ on the set of all DMGs (indicating for each DMG whether it satisfies the background knowledge).
For example, one type of background knowledge commonly considered in the literature (probably mainly for reasons of simplicity) is \emph{causal sufficiency}, which can be formalized by $\Psi(\C{G}) = 1$ iff $\C{G}$ contains no bidirected edges, and $\Psi(\C{G}) = 0$ otherwise. 
A less trivial example of background knowledge are the JCI Assumptions, which play a central role in the Joint Causal Inference framework \citep{Mooij++_JMLR_2020} for performing causal discovery from multiple datasets that correspond with measurements of a system in different contexts (for example, a combination of observational and different interventional datasets). 
The latter example will be discussed in more detail in Section~\ref{sec:jci}.

\subsection{SOUNDNESS AND COMPLETENESS}

We first extend the standard notions of soundness and completeness to a setting that involves cycles and background knowledge (but no selection bias).
\begin{definition}\label{def:sound_complete_background_knowledge}
  Under background knowledge $\Psi$, a mapping $\Phi$ from independence models to DPAGs is called:
  \begin{compactitem}
    \item \emph{sound} if for all DMGs $\C{G}$ with $\Psi(\C{G}) = 1$: $\Phi(\IM_\sigma(\C{G}))$ contains $\C{G}$; 
    \item \emph{arrowhead complete} if for all DMGs $\C{G}$ with $\Psi(\C{G}) = 1$: if $i \notin \ansub{\tilde{\C{G}}}{j}$ for any DMG $\tilde{\C{G}}$ with $\Psi(\tilde{\C{G}}) = 1$ that is $\sigma$-Markov equivalent to $\C{G}$, then there is an arrowhead $i \ots j$ in $\Phi(\IM_\sigma(\C{G}))$;
    \item \emph{tail complete} if for all DMGs $\C{G}$ with $\Psi(\C{G}) = 1$: if $i \in \ansub{\tilde{\C{G}}}{j}$ in any DMG $\tilde{\C{G}}$ with $\Psi(\tilde{\C{G}}) = 1$ that is $\sigma$-Markov equivalent to $\C{G}$, then there is a tail $i \to j$ in $\Phi(\IM_\sigma(\C{G}))$;
    \item \emph{Markov complete} if for all DMGs $\C{G}_1,\C{G}_2$ with $\Psi(\C{G}_1)=\Psi(\C{G}_2)=1$: $\C{G}_1$ is $\sigma$-Markov equivalent to $\C{G}_2$ iff $\Phi(\IM_\sigma(\C{G}_1)) = \Phi(\IM_\sigma(\C{G}_2))$.
  \end{compactitem}
  It is called complete if it is both arrowhead complete and tail complete.
\end{definition}
Note that this reduces to the standard notions \citep{Zhang2008_AI} if $\Psi(\C{G}) = 1$ iff $\C{G}$ is acyclic,
while it also reduces to the notions in Theorem~\ref{theo:fci_sound_complete} if no background knowledge is used
(i.e., $\Psi(\C{G}) = 1$ for all $\C{G}$).

We assume that the background knowledge is \emph{compatible with the acyclification} in the following sense:
\begin{assumption}\label{ass:acybk}
  For all DMGs $\C{G}$ with $\Psi(\C{G}) = 1$, the following three conditions hold:
  \begin{compactenum}[(i)]
    \item There exists an acyclification $\C{G}'$ of $\C{G}$ with $\Psi(\C{G}') = 1$;\label{ass:acybk_nonempty}
    \item For all nodes $i,j$ in $\C{G}$: if $i \in \ansub{\C{G}}{j}$ then there exists an acyclification $\C{G}'$ of $\C{G}$ with $\Psi(\C{G}') = 1$ such that $i \in \ansub{\C{G}'}{j}$;\label{ass:acybk_anc}
    \item For all nodes $i,j$ in $\C{G}$: if $i \notin \ansub{\C{G}}{j}$ then $i \notin \ansub{\C{G}'}{j}$ for all acyclifications $\C{G}'$ of $\C{G}$ with $\Psi(\C{G}') = 1$.\label{ass:acybk_nonanc}
  \end{compactenum}
\end{assumption}
For example, the background knowledge of ``causal sufficiency'' satisfies this assumption, as well as the background knowledge of ``acyclicity''.

The following result is straightforward given all the definitions, but is also quite powerful, as it allows us to directly generalize existing acyclic soundness and completeness results (for certain background knowledge) to the $\sigma$-separation setting.
\begin{theorem}\label{theo:generalizing_soundness_completeness}
  Let $\Psi$ be background knowledge that satisfies Assumption~\ref{ass:acybk} and let $\Phi$ be a mapping from independence models to DPAGs. Then:
  \begin{compactenum}[(i)]
    \item If $\Phi$ is sound for background knowledge $\Psi$ under the additional assumption of acyclicity, then $\Phi$ is sound for background knowledge $\Psi$.
    \item If $\Phi$ is arrowhead (tail) complete for background knowledge $\Psi$ under the additional assumption of acyclicity, then $\Phi$ is arrowhead (tail) complete for background knowledge $\Psi$.
     \item If $\Phi$ is sound and arrowhead complete for background knowledge $\Psi$ under the additional assumption of acyclicity, then $\Phi$ is Markov complete.
  \end{compactenum}
\end{theorem}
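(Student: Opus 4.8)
The plan is to mimic the structure of the proof of Theorem~\ref{theo:fci_sound_complete}, but now carrying along the Boolean constraint $\Psi$ and invoking Assumption~\ref{ass:acybk} at precisely the points where, in the unconstrained case, we used Proposition~\ref{prop:acyclification}. The key bridge is that for any DMG $\C{G}$ with $\Psi(\C{G})=1$, Assumption~\ref{ass:acybk}(\ref{ass:acybk_nonempty}) supplies at least one acyclification $\C{G}'$ with $\Psi(\C{G}')=1$, and by Proposition~\ref{prop:Markov_equivalence_acyclification} this $\C{G}'$ is an \emph{acyclic} DMG with $\IM_\sigma(\C{G}) = \IM_d(\C{G}')$, so $\Phi$ applied to $\IM_\sigma(\C{G})$ equals $\Phi$ applied to the $d$-independence model of an acyclic $\Psi$-satisfying graph, where the acyclic hypotheses on $\Phi$ apply.

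For part (i), soundness: fix $\C{G}$ with $\Psi(\C{G})=1$ and let $\C{P} = \Phi(\IM_\sigma(\C{G}))$. Pick any acyclification $\C{G}'$ of $\C{G}$ with $\Psi(\C{G}')=1$ (Assumption~\ref{ass:acybk}(\ref{ass:acybk_nonempty})); by acyclic soundness $\C{P}$ contains $\C{G}'$. Now verify the three clauses of Definition~\ref{def:dpag_contains_dmg} for $\C{G}$: adjacency transfers because inducing paths in $\C{G}$ and in $\C{G}'$ coincide (Proposition~\ref{prop:acyclification}(\ref{prop:acyclification_inducing_path})); an arrowhead $i\sto j$ in $\C{P}$ gives $j\notin\ansub{\C{G}'}{i}$, and by Proposition~\ref{prop:acyclification}(\ref{prop:acyclification_non_anc}) — ancestral relations in any acyclification are already present in $\C{G}$, contrapositively — we get $j\notin\ansub{\C{G}}{i}$; a tail $i\to j$ in $\C{P}$ gives $i\in\ansub{\C{G}'}{j}$, hence $i\in\ansub{\C{G}}{j}$ again by Proposition~\ref{prop:acyclification}(\ref{prop:acyclification_non_anc}). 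This is essentially verbatim the soundness argument of Theorem~\ref{theo:fci_sound_complete}, with ``any acyclification'' replaced by ``some $\Psi$-satisfying acyclification''; note that for the arrowhead and tail clauses we only needed \emph{one} such acyclification, which is exactly what Assumption~\ref{ass:acybk}(\ref{ass:acybk_nonempty}) guarantees.

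For part (ii), arrowhead completeness (tail completeness is symmetric): suppose $i\notin\ansub{\tilde{\C{G}}}{j}$ for every DMG $\tilde{\C{G}}$ with $\Psi(\tilde{\C{G}})=1$ that is $\sigma$-Markov equivalent to $\C{G}$. We must show this hypothesis, transported to the acyclic world, matches the antecedent of acyclic arrowhead completeness applied to some fixed $\Psi$-satisfying acyclification $\C{G}'$ of $\C{G}$. Concretely: let $\tilde{\C{G}}'$ be any ADMG with $\Psi(\tilde{\C{G}}')=1$ that is $d$-Markov equivalent to $\C{G}'$. Then $\IM_\sigma(\tilde{\C{G}}') = \IM_d(\tilde{\C{G}}') = \IM_d(\C{G}') = \IM_\sigma(\C{G})$ (first equality since $\tilde{\C{G}}'$ is acyclic, hence its own acyclification, via Proposition~\ref{prop:Markov_equivalence_acyclification}), so $\tilde{\C{G}}'$ is a $\Psi$-satisfying DMG $\sigma$-Markov equivalent to $\C{G}$, whence $i\notin\ansub{\tilde{\C{G}}'}{j}$ by hypothesis. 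Thus the antecedent of acyclic arrowhead completeness holds for $\C{G}'$, giving an arrowhead $i\ots j$ in $\Phi(\IM_d(\C{G}')) = \Phi(\IM_\sigma(\C{G}))$, as desired. The one subtlety here — and the step I expect to be the main obstacle — is the direction of completeness that needs Assumption~\ref{ass:acybk}(\ref{ass:acybk_anc})--(\ref{ass:acybk_nonanc}): for \emph{tail} completeness one assumes $i\in\ansub{\tilde{\C{G}}}{j}$ in \emph{some} $\sigma$-Markov-equivalent $\Psi$-graph $\tilde{\C{G}}$, and to feed this into acyclic tail completeness for $\C{G}'$ one needs a $\Psi$-satisfying \emph{acyclification} $\tilde{\C{G}}'$ of $\tilde{\C{G}}$ that still has $i\in\ansub{\tilde{\C{G}}'}{j}$ and is $d$-Markov equivalent to $\C{G}'$; existence of such is exactly Assumption~\ref{ass:acybk}(\ref{ass:acybk_anc}), and one must also check via Proposition~\ref{prop:Markov_equivalence_acyclification} that it is indeed $d$-Markov equivalent to $\C{G}'$. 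Care is needed to track the quantifiers ($\forall$ for arrowhead, $\exists$ for tail) and match them to the right clause of the assumption.

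For part (iii), Markov completeness: given $\C{G}_1,\C{G}_2$ with $\Psi(\C{G}_i)=1$, fix $\Psi$-satisfying acyclifications $\C{G}_1',\C{G}_2'$ (Assumption~\ref{ass:acybk}(\ref{ass:acybk_nonempty})); then $\IM_\sigma(\C{G}_i) = \IM_d(\C{G}_i')$ by Proposition~\ref{prop:Markov_equivalence_acyclification}, so $\Phi(\IM_\sigma(\C{G}_1)) = \Phi(\IM_\sigma(\C{G}_2))$ iff $\Phi(\IM_d(\C{G}_1')) = \Phi(\IM_d(\C{G}_2'))$. Acyclic Markov completeness — which follows from acyclic soundness plus acyclic arrowhead completeness, as in the standard FCI argument (cf.\ the Markov-completeness part of the proof of Theorem~\ref{theo:fci_sound_complete}) — then gives that this holds iff $\C{G}_1'$ is $d$-Markov equivalent to $\C{G}_2'$, i.e.\ $\IM_d(\C{G}_1') = \IM_d(\C{G}_2')$, i.e.\ $\IM_\sigma(\C{G}_1) = \IM_\sigma(\C{G}_2)$, i.e.\ $\C{G}_1$ is $\sigma$-Markov equivalent to $\C{G}_2$. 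Here one should either cite that acyclic soundness plus arrowhead completeness imply acyclic Markov completeness, or simply assume acyclic Markov completeness as an additional acyclic hypothesis; the cleanest write-up derives it, since the soundness direction forces $\Phi$ to separate non-equivalent graphs and arrowhead (plus tail) completeness forces it to merge equivalent ones.
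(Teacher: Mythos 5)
There is a genuine gap in your part (i), specifically in the arrowhead clause. You argue that $i \sto j$ in $\C{P}$ gives $j \notin \ansub{\C{G}'}{i}$ for \emph{one} chosen $\Psi$-satisfying acyclification $\C{G}'$, and then conclude $j \notin \ansub{\C{G}}{i}$ ``by Proposition~\ref{prop:acyclification}(\ref{prop:acyclification_non_anc}), contrapositively'', adding that only Assumption~\ref{ass:acybk}(\ref{ass:acybk_nonempty}) is needed. This step fails: Proposition~\ref{prop:acyclification}(\ref{prop:acyclification_non_anc}) (and its contrapositive) only transfers ancestral relations \emph{from} an acyclification \emph{to} $\C{G}$; it does not say that an ancestral relation of $\C{G}$ survives in your particular $\C{G}'$. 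Indeed it typically does not: if $i$ and $j$ lie in a common strongly connected component of $\C{G}$ (so $j \in \ansub{\C{G}}{i}$), the acyclification $\C{G}^{\acy}$ replaces the cycle by bidirected edges and has $j \notin \ansub{\C{G}^{\acy}}{i}$. So $j \notin \ansub{\C{G}'}{i}$ for a single acyclification gives you nothing about $\C{G}$. The paper's proof closes exactly this hole by observing that $\C{P} = \Phi(\IM_d(\C{G}'))$ contains \emph{every} $\C{G}' \in \acy(\C{G},\Psi)$ (they all share the same independence model), so $j \notin \ansub{\C{G}'}{i}$ holds for all of them, and then invoking Assumption~\ref{ass:acybk}(\ref{ass:acybk_anc}): if $j \in \ansub{\C{G}}{i}$, some $\Psi$-satisfying acyclification would preserve this ancestral relation, a contradiction. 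This is the one place where clause (\ref{ass:acybk_anc}) of the assumption is genuinely needed; your proposal never uses it there. (Your tail clause is fine: there the contrapositive of clause (\ref{ass:acybk_nonanc}), or of Proposition~\ref{prop:acyclification}(\ref{prop:acyclification_non_anc}), with a single acyclification does suffice.)

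Two smaller remarks. In part (ii) your arrowhead argument matches the paper's (restrict the universal hypothesis to the $\Psi$-satisfying ADMGs $d$-Markov equivalent to $\C{G}'$, which are themselves $\sigma$-Markov equivalent to $\C{G}$); but your ``subtlety'' about the tail case rests on reading the tail-completeness definition existentially, whereas it is the universal statement mirroring the arrowhead one, so the tail case is proved by the same restriction argument and needs no transport of a witness via Assumption~\ref{ass:acybk}(\ref{ass:acybk_anc}). In part (iii) your reduction to the acyclifications is the paper's, but the claim that acyclic soundness plus arrowhead completeness imply acyclic Markov completeness is precisely the nontrivial content there: the paper proves it for a general $\Phi$ via the induced DMAGs and the characterization of \citet{Ali++2009} (same skeleton and colliders with order, plus their Lemma 3.13); note also that the ``merge equivalent graphs'' direction is automatic because $\Phi$ is a function of the independence model, while it is the separation of non-equivalent graphs that uses soundness together with arrowhead completeness, so your summary of which property buys which direction should be corrected if you spell this out.
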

\begin{proof}
For a DMG $\C{G}$ with $\Psi(\C{G}) = 1$, define 
  \begin{equation*}
    \begin{split}
      \acy(\C{G},\Psi) := \{ \C{G}' : {} & \C{G}' \text{ is an acyclification of $\C{G}$} \\
                                         & \text{ and $\Psi(\C{G}') = 1$}\}.
    \end{split}
  \end{equation*}

  (i) Suppose that $\Phi$ is sound for background knowledge $\Psi$ under the additional assumption of acyclicity.
  Let $\C{G}$ be a DMG with $\Psi(\C{G}) = 1$ and let $\C{P} := \Phi(\IM_\sigma(\C{G}))$.
  For any $\C{G}' \in \acy(\C{G},\Psi)$, which is nonempty by Assumption~\ref{ass:acybk}(\ref{ass:acybk_nonempty}), $\C{P} = \Phi(\IM_\sigma(\C{G}'))$ contains $\C{G}'$ by virtue of the acyclic soundness of $\Phi$ under background knowledge $\Psi$. 
  Hence, two nodes $u,v$ in $\C{P}$ are adjacent if and only if there is an inducing path between $u$ and $v$ in $\C{G}'$, and by  Proposition~\ref{prop:acyclification}(\ref{prop:acyclification_inducing_path}), this holds if and only if there is an inducing path between $u$ and $v$ in $\C{G}$.
  Further, $u \sto v$ in $\C{P}$ implies $v \notin \ansub{\C{G}'}{u}$ for all $\C{G}' \in \acy(\C{G},\Psi)$, and hence by Assumption~\ref{ass:acybk}(\ref{ass:acybk_anc}), this implies $v \notin \ansub{\C{G}}{u}$.
  Finally, $u \to v$ in $\C{P}$ implies $u \in \ansub{\C{G}'}{v}$ for all $\C{G}' \in \acy(\C{G},\Psi)$, and hence by Assumption~\ref{ass:acybk}(\ref{ass:acybk_nonanc}), this implies $u \in \ansub{\C{G}}{v}$.
  We conclude that $\Phi$ is sound for background knowledge $\Psi$.

  (ii) We give the proof for arrowhead completeness (tail completeness is proved similarly).
  Suppose that $\Phi$ is arrowhead complete for background knowledge $\Psi$ under the additional assumption of acyclicity.
  I.e., for all ADMGs $\C{G}'$ with $\Psi(\C{G}') = 1$, any ancestral relation absent in any ADMG $\tilde{\C{G}}'$ with $\Psi(\tilde{\C{G}}') = 1$ that is $d$-Markov equivalent to $\C{G}'$, is oriented as an arrowhead in $\Phi(\IM_d(\C{G}'))$.
  Let $\C{G}$ be a DMG with $\Psi(\C{G}) = 1$ and let $\C{P} := \Phi(\IM_\sigma(\C{G}))$.
  Let $\C{G}' \in \acy(\C{G},\Psi)$, which is nonempty by Assumption~\ref{ass:acybk}(\ref{ass:acybk_nonempty}).
  Assume that an ancestral relation is absent in any DMG $\tilde{\C{G}}$ with $\Psi(\tilde{\C{G}}) = 1$ that is $\sigma$-Markov equivalent to $\C{G}$. 
  Then in particular, this ancestral relation is absent in any ADMG $\tilde{\C{G}}'$ with $\Psi(\tilde{\C{G}}') = 1$ that is $d$-Markov equivalent to $\C{G}'$ (which is $\sigma$-Markov equivalent to $\C{G}$).
  By the acyclic arrowhead completeness of $\Phi$ under background knowledge $\Psi$, it must be oriented as an arrowhead in $\Phi(\IM_d(\C{G}')) = \C{P}$.
  Hence $\Phi$ is arrowhead complete for background knowledge $\Psi$.

  (iii) Let $\C{G}_1$ and $\C{G}_2$ be two DMGs that satisfy the background knowledge, i.e., $\Psi(\C{G}_1) = \Psi(\C{G}_2) = 1$.
We have to show that if $\C{G}_1$ and $\C{G}_2$ are not $\sigma$-Markov equivalent, then $\Phi(\IM_\sigma(\C{G}_1)) \ne \Phi(\IM_\sigma(\C{G}_2))$.
  By Assumption~\ref{ass:acybk}(\ref{ass:acybk_nonempty}), there exist acyclifications $\C{G}_1' \in \acy(\C{G}_1,\Psi)$
and $\C{G}_2' \in \acy(\C{G}_2,\Psi)$.
Proposition~\ref{prop:Markov_equivalence_acyclification} implies both $\IM_\sigma(\C{G}_1) = \IM_d(\C{G}_1')$ and $\IM_\sigma(\C{G}_2) = \IM_d(\C{G}_2')$. 
Assume that $\C{G}_1$ and $\C{G}_2$ are not $\sigma$-Markov equivalent. 
Then their acyclifications $\C{G}_1'$ and $\C{G}_2'$ are not $d$-Markov equivalent.
By assumption, both acyclifications $\C{G}_1', \C{G}_2'$ satisfy the background knowledge, i.e., $\Psi(\C{G}_1') = \Psi(\C{G}_2') = 1$.

The induced DMAGs $\C{H}_1 := \DMAG(\C{G}_1')$ and $\C{H}_2 := \DMAG(\C{G}_2')$ cannot be $d$-Markov equivalent because the induced DMAGs preserve the conditional independence models.
Therefore, by the result of \citet{Ali++2009}, $\C{H}_1$ and $\C{H}_2$ either have a different skeleton, or they have the same skeleton but different colliders with order.
If their skeletons differ, then also $\Phi(\IM_d(\C{G}_1')) \ne \Phi(\IM_d(\C{G}_2'))$ by the assumed soundness of $\Phi$. 
If they do have the same skeletons, there must be at least one collider with order in $\C{H}_1$ that is not a collider with order in $\C{H}_2$, or vice versa.
By Lemma 3.13 in \citet{Ali++2009}, this would imply that there is at least one collider with order in $\C{H}_1$ that is not a collider in $\C{H}_2$, or vice versa. 

Without loss of generality, assume that the former holds.  
The assumed soundness and arrowhead completeness of $\Phi$ under the additional assumption of acyclicity imply that this collider with order in $\C{H}_1 = \DMAG(\C{G}_1')$ will appear as a collider in $\Phi(\IM_d(\C{G}_1'))$.
Also, the soundness of $\Phi$ under the additional assumption of acyclicity implies that this noncollider in $\C{H}_2$ cannot end up as a collider in $\Phi(\IM_d(\C{G}_2'))$. 
Hence, $\Phi(\IM_d(\C{G}_1')) \ne \Phi(\IM_d(\C{G}_2'))$, and therefore, $\Phi(\IM_\sigma(\C{G}_1)) \ne \Phi(\IM_\sigma(\C{G}_2))$.
\end{proof}

In the remainder of this section, we will apply this result to two types of background knowledge: causal sufficiency, and the JCI assumptions.

\subsection{CAUSAL SUFFICIENCY}\label{sec:causal_sufficiency}

We consider the (commonly assumed) background knowledge of ``causal sufficiency''. 
This is formalized by $\Psi(\C{G}) = 1$ iff DMG $\C{G}$ contains no bidirected edges.
For the acyclic setting, the well-known PC algorithm \citep{SGS2000}, adapted with Meek's orientation rules \citep{Meek1995PC}, was shown to be sound and complete.
It outputs a so-called Complete Partially Directed Acyclic Graph (CPDAG), which can be interpreted also as a DPAG (by replacing all undirected edges $i \ttt j$ by bicircle edges $i \ctc j$).
Because this particular background knowledge satisfies Assumption~\ref{ass:acybk}, we can apply Theorem~\ref{theo:generalizing_soundness_completeness} to extend the existing acyclic soundness and completeness results to the cyclic setting:
\begin{corollary}
  The PC algorithm with Meek's orientation rules is sound, arrowhead complete, tail complete and Markov complete (in the $\sigma$-separation setting without selection bias).
\end{corollary}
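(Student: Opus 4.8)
The plan is to obtain this corollary as a special case of Theorem~\ref{theo:generalizing_soundness_completeness}, applied to the background knowledge of causal sufficiency, $\Psi(\C{G}) = 1$ iff $\C{G}$ contains no bidirected edges, with $\Phi$ the map sending an independence model to the DPAG obtained by running PC with Meek's orientation rules and reading the resulting CPDAG as a DPAG (each undirected edge $i \ttt j$ becoming a bicircle edge $i \ctc j$). Two ingredients are needed: first, that $\Psi$ satisfies Assumption~\ref{ass:acybk}; second, that $\Phi$ is sound, arrowhead complete and tail complete for $\Psi$ \emph{under the additional assumption of acyclicity}, which is exactly the classical soundness and completeness of PC with Meek's rules \citep{Meek1995PC,SGS2000}: a causally sufficient acyclic DMG is simply a DAG, and the CPDAG output by PC orients precisely the edges whose orientation is invariant over the Markov equivalence class of that DAG.

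The first ingredient is where the real work lies, so I would do it carefully. For Assumption~\ref{ass:acybk}(\ref{ass:acybk_nonempty}), given a causally sufficient DMG $\C{G}$ I would take the acyclification $\C{G}'$ that replaces every strongly connected component by an arbitrary fully-connected DAG and adds no bidirected edges (one of the acyclifications explicitly described below Proposition~\ref{prop:Markov_equivalence_acyclification}). Such a $\C{G}'$ is again bidirected-edge-free: the inter-component bidirected edges of Definition~\ref{def:acyclification}(\ref{def:acyclification_inter_scc})b are introduced only when $\C{G}$ already has a bidirected edge into the relevant component, and $\C{G}$ has none, while the intra-component edges of Definition~\ref{def:acyclification}(\ref{def:acyclification_intra_scc}) were chosen directed; hence $\Psi(\C{G}')=1$. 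For Assumption~\ref{ass:acybk}(\ref{ass:acybk_anc}), when $i \in \ansub{\C{G}}{j}$ I would use precisely the acyclification built in the proof of Proposition~\ref{prop:acyclification}(\ref{prop:acyclification_anc}): it chooses a suitable fully-connected DAG on each strongly connected component so that a fixed directed $i$-to-$j$ path survives, and introduces no bidirected edges, so it lies in $\acy(\C{G},\Psi)$ and witnesses $i \in \ansub{\C{G}'}{j}$. Assumption~\ref{ass:acybk}(\ref{ass:acybk_nonanc}) is immediate from Proposition~\ref{prop:acyclification}(\ref{prop:acyclification_non_anc}), which already holds for \emph{all} acyclifications.

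With Assumption~\ref{ass:acybk} in hand, I would invoke Theorem~\ref{theo:generalizing_soundness_completeness}: part~(i) upgrades acyclic soundness of PC to soundness in the $\sigma$-separation setting; part~(ii) does the same for arrowhead and tail completeness; and part~(iii), which needs only acyclic soundness plus arrowhead completeness, yields Markov completeness. Combining these gives the four claimed properties, and no selection bias is ever involved (PC produces no undirected selection edges, and the acyclifications used above introduce none either). I would also spell out once the translation between the classical statement ``the CPDAG represents the Markov equivalence class of the DAG'' and the Definition~\ref{def:sound_complete_background_knowledge} notions under the CPDAG$\leftrightarrow$DPAG identification: for a DAG, an inducing path between $i,j$ in the sense of Definition~\ref{def:inducing_path} is necessarily the single edge $i\to j$ or $j\to i$ (a non-endpoint non-collider would have to point only to same-component neighbours, of which it has none, and two adjacent colliders would force a bidirected edge), so ``adjacent in the DPAG iff inducing path in $\C{G}$'' matches ``same skeleton'', and since every edge oriented in the CPDAG is oriented identically in every DAG of the class, the ``contains'' and completeness requirements of Definition~\ref{def:sound_complete_background_knowledge} restricted to DAGs coincide with the classical CPDAG guarantees.

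The main obstacle is genuinely this bookkeeping, i.e., confirming that the acyclifications used to verify Assumption~\ref{ass:acybk}(\ref{ass:acybk_nonempty}) and (\ref{ass:acybk_anc}) can always be taken inside the causal-sufficiency class; everything else is a direct appeal to Theorem~\ref{theo:generalizing_soundness_completeness} and to the well-known acyclic behaviour of PC with Meek's rules. A minor secondary point is getting the CPDAG-as-DPAG reading exactly right, so that the acyclic hypotheses of Theorem~\ref{theo:generalizing_soundness_completeness} are literally satisfied.
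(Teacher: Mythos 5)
Your proposal is correct and follows essentially the same route as the paper: the corollary is obtained by applying Theorem~\ref{theo:generalizing_soundness_completeness} with $\Psi$ encoding causal sufficiency, after noting that this background knowledge satisfies Assumption~\ref{ass:acybk} and that PC with Meek's rules is sound and complete in the acyclic case, with the CPDAG read as a DPAG. Your explicit verification that bidirected-edge-free acyclifications exist (and witness ancestral relations) merely fills in details the paper leaves implicit, so the two arguments coincide in substance.
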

We can therefore also apply Propositions \ref{prop:cdpag_ancestors_dmg}, \ref{prop:cdpag_nonancestors},
to read off the absence or presence of indirect 
causal relations 
from the DPAG (obtained from the CPDAG) output by the PC algorithm.
Note that the presence or absence of direct causal relations can be easily read off from the DPAG in this case as they are in one-to-one correspondence with directed edges in the DPAG.

\subsection{JOINT CAUSAL INFERENCE}\label{sec:jci}
Recently, \citet{Mooij++_JMLR_2020} proposed FCI-JCI, an extension of FCI that enables causal discovery from data measured in different contexts (for example, if observational data as well as data corresponding to various interventions is available).
This is a particular implementation of the general Joint Causal Inference (JCI) framework.
For a detailed treatment, we refer the reader to \citep{Mooij++_JMLR_2020}; here we only give a brief summary of the JCI assumptions that we need to extend our results on FCI to FCI-JCI.
\begin{definition}[JCI Assumptions]\label{def:jci_assumptions}
  The data-generating mechanism for a system in a context is described by a simple SCM $\C{M}$ with two types of 
  endogenous variables: \emph{system} variables $\{X_i\}_{i\in\C{I}}$ and \emph{context} variables $\{C_k\}_{k\in\C{K}}$. 
  Its graph $\C{G}(\C{M})$ has nodes $\C{I} \cup \C{K}$ (corresponding to system variables and context variables, respectively).
  The following (optional) JCI Assumptions can be made about the graph $\C{G} := \C{G}(\C{M})$:
  \begin{compactenum}[(1)]
    \item \label{ass:uncaused}
      \emph{Exogeneity}: No system variable causes any context variable, i.e.,
      $\forall_{k \in \C{K}} \forall_{i\in\C{I}}: i \to k \notin \C{G}$.
    \item \label{ass:unconfounded}
      \emph{Randomization}: No pair of context and system variable is confounded, i.e.,
      $\forall_{k\in\C{K}} \forall_{i\in\C{I}}: i \oto k \notin \C{G}$.
    \item \label{ass:dependences}
      \emph{Genericity}: The induced subgraph $\C{G}(\C{M})_{\C{K}}$ on the context variables is of the following special form: 
      $\forall_{k \ne k' \in \C{K}}: k \oto k' \in \C{G} \land k \to k' \notin \C{G}$.
  \end{compactenum}
\end{definition}
The following Lemma is key to our extensions to the cyclic $\sigma$-separation setting.
\begin{lemma}\label{lemm:jci_acyclification}
  If subset $\{1\}$, $\{1,2\}$, or $\{1,2,3\}$ of the JCI Assumptions holds for a DMG $\C{G}$, then the same subset of assumptions holds for any acyclification of $\C{G}$.
\end{lemma}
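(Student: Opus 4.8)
The plan is to verify each of the three JCI Assumptions in turn, using the explicit description of acyclifications in Definition~\ref{def:acyclification} together with the non-ancestral information provided by Proposition~\ref{prop:acyclification}(\ref{prop:acyclification_non_anc}). Let $\C{G}$ be a DMG on nodes $\C{I} \cup \C{K}$ and let $\C{G}'$ be any acyclification of $\C{G}$. The first, and I expect main, obstacle is a structural observation that underlies all three cases: under any nonempty prefix of the JCI Assumptions, no context variable $k \in \C{K}$ can lie in the same strongly connected component of $\C{G}$ as a system variable $i \in \C{I}$. Indeed, if $k \in \sccsub{\C{G}}{i}$ with $i \in \C{I}$ and $k \in \C{K}$, then $i \in \ansub{\C{G}}{k}$, so there is a directed path $i \to \dots \to k$ in $\C{G}$; its last edge is $i' \to k$ for some node $i'$. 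If $i' \in \C{I}$ this contradicts Assumption~\eref{ass:uncaused} (a system variable causing a context variable); if $i' \in \C{K}$, then since also $k \in \ansub{\C{G}}{i'}$ (they are in the same SCC), we would have $k' \in \sccsub{\C{G}}{k}$ with $k, k' \in \C{K}$ adjacent via $k' \to k$, contradicting Assumption~\eref{ass:dependences}. (Under only Assumptions $\{1\}$ or $\{1,2\}$ this last sub-case cannot be excluded directly, but it is still harmless: see below.) So I would first isolate the fact that, under Assumption~\eref{ass:uncaused}, for every $k \in \C{K}$ the set $\sccsub{\C{G}}{k}$ is either $\{k\}$ or a set of context variables, and never contains a system variable.

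With that in hand, Assumption~\eref{ass:uncaused} for $\C{G}'$ follows quickly. Suppose $i \to k \in \C{E}'$ with $i \in \C{I}$, $k \in \C{K}$. By the previous paragraph $i \notin \sccsub{\C{G}}{k}$, so by Definition~\ref{def:acyclification}(\ref{def:acyclification_inter_scc}) there is a node $m \in \sccsub{\C{G}}{k}$ with $i \to m \in \C{E}$. Again $m \notin \C{I}$ (it's in $\sccsub{\C{G}}{k}$), so $m \in \C{K}$, contradicting Assumption~\eref{ass:uncaused} for $\C{G}$. The case where $i$ and $k$ are in the same SCC of $\C{G}$ is excluded since that SCC would contain both a system and a context variable. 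Assumption~\eref{ass:unconfounded} for $\C{G}'$ is handled identically, replacing $\to$ by $\oto$ and Definition~\ref{def:acyclification}(\ref{def:acyclification_inter_scc})a by (\ref{def:acyclification_inter_scc})b; note that the intra-SCC case of Definition~\ref{def:acyclification}(\ref{def:acyclification_intra_scc}) could in principle introduce a new $i \oto k$ edge, but only if $i \in \sccsub{\C{G}}{k}$, which is excluded.

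For Assumption~\eref{ass:dependences} applied to the $\{1,2,3\}$ case, I would argue similarly but now also track the directed edges among context variables. First, no system variable can be in $\sccsub{\C{G}}{k}$ for $k \in \C{K}$ (shown above). So for $k \ne k' \in \C{K}$, any edge between them in $\C{G}'$ arises either from the intra-SCC clause (if $k \in \sccsub{\C{G}}{k'}$) or from the inter-SCC clause. If $k \in \sccsub{\C{G}}{k'}$, Definition~\ref{def:acyclification}(\ref{def:acyclification_intra_scc}) lets us insist the edge be $k \oto k' \in \C{F}'$ — and here the freedom in choosing the acyclification must be used, but the Lemma says "for any acyclification," so I need to check that \emph{every} acyclification respects \eref{ass:dependences}; this is where Assumption~\eref{ass:dependences} on $\C{G}$ itself does the work, since within the context SCC all pairs are already bidirected-only in $\C{G}$, forcing the acyclification to have $k \oto k'$ and no directed edge $k \to k'$ or $k' \to k$ (a directed edge would require, via Definition~\ref{def:acyclification}(\ref{def:acyclification_inter_scc})a, a source edge $k \to m$ in $\C{G}$ with $m$ a context variable — contradicting \eref{ass:dependences} on $\C{G}$ — or would be an intra-SCC directed edge, which is not forced and hence, for the "any acyclification" claim, I must note that Definition~\ref{def:acyclification}(\ref{def:acyclification_intra_scc}) \emph{permits} directed intra-SCC edges). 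Here I anticipate the real subtlety: Definition~\ref{def:acyclification}(\ref{def:acyclification_intra_scc}) allows an acyclification to orient an intra-SCC edge as $k \to k'$, which would violate \eref{ass:dependences}. I would resolve this by observing that $\C{G}(\C{M})_{\C{K}}$ under \eref{ass:dependences} has no directed edges at all, so every pair of distinct context variables that are adjacent in $\C{G}$ is adjacent only via a bidirected edge; but the SCC structure on $\C{K}$ induced by such a graph is trivial (each context variable is its own SCC), so the intra-SCC clause never applies to a pair of context variables, and only the inter-SCC clause (\ref{def:acyclification_inter_scc}) is relevant — which, as computed, forces $k \oto k'$ (copied from $\C{G}$ or from a shared SCC neighbour, all of which are context variables with only bidirected edges) and forbids $k \to k'$. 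That closes the argument; I would also double-check the $\{1\}$ and $\{1,2\}$ cases require nothing about context-context edges, so the harmless residual sub-case flagged earlier never needs to be excluded for them.
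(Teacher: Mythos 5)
Your overall strategy---show that under JCI Assumption 1 no strongly connected component of $\C{G}$ mixes system and context variables, and then check that neither the inter-SCC nor the intra-SCC clause of Definition~\ref{def:acyclification} can create a forbidden edge---is exactly the route the paper takes. However, your proof of the key structural fact has a gap that you do not repair. To rule out a system variable $i$ and a context variable $k$ sharing an SCC, you take a directed path from $i$ to $k$ and inspect only its \emph{last} edge $i' \to k$; when $i' \in \C{K}$ you invoke JCI Assumption 3, and for the assumption sets $\{1\}$ and $\{1,2\}$ you declare this residual sub-case ``harmless''. It is not harmless: if a mixed SCC existed, clause (\ref{def:acyclification_intra_scc}) of Definition~\ref{def:acyclification} would force \emph{some} edge between $i$ and $k$ in every acyclification---$i \to k$, $i \ot k$, or $i \oto k$---and the first and third options are precisely violations of Assumptions 1 and 2 that you are trying to preserve. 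Your closing remark that the $\{1\}$ and $\{1,2\}$ cases ``require nothing about context-context edges'' misses the point: the danger is not the context-context edge $i' \to k$ itself, but the system-context edge that the acyclification would then be forced to insert inside the mixed SCC. So, as written, your argument for the $\{1\}$ and $\{1,2\}$ cases is incomplete.

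The gap is easy to close, and Assumption 3 is never needed for it: on a directed path from the system variable $i$ to the context variable $k$, consider the \emph{first} context variable on the path; its predecessor is a system variable, so the edge into it is a system-to-context directed edge, contradicting JCI Assumption 1 alone. This is the (implicit) content of the paper's opening observation that ``JCI Assumption 1 implies that each strongly connected component in $\C{G}$ consists entirely of system variables or entirely of context variables.'' With that fact established, the remainder of your argument---the inter-SCC bookkeeping for Assumptions 1 and 2, and the observation that under Assumptions 1 and 3 every context variable is its own SCC, so the inter-SCC clause copies exactly the bidirected edges among context variables and no directed ones---is correct and matches the paper's proof.
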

\begin{proof}
  Let $\C{G}'$ be an acyclification of $\C{G}$.
  JCI Assumption 1 implies that each strongly connected component in $\C{G}$ consists entirely of system variables
  or entirely of context variables. 
  Since in addition, $\C{G}$ does not have any directed edge from a system to a context variable,
  there will not be any spurious directed edge in $\C{G}'$ from a system to a context variable. 
  Hence also $\C{G}'$ satisfies JCI Assumption 1. 
  If $\C{G}$ satisfies also JCI Assumption 2, the acyclification $\C{G}'$ will not contain any spurious bidirected edge between a context and a system variable. 
  Hence $\C{G}'$ satisfies JCI Assumptions 1 and 2 if $\C{G}$ does so. 
  Finally, it is clear that JCI Assumption 3 holds for $\C{G}'$ if JCI Assumptions 1 and 3 hold for $\C{G}$.
\end{proof}
This trivially implies that these different combinations of the JCI Assumptions satisfy 
Assumption~\ref{ass:acybk}. That allows us to extend the existing acyclic soundness and
completeness results for FCI-JCI to the cyclic setting.

FCI-JCI was shown to be sound under the assumption of acyclicity \citep[Theorem 35,][]{Mooij++_JMLR_2020}.
This gives with Theorem~\ref{theo:generalizing_soundness_completeness}:
\begin{corollary}\label{coro:fcijci_sound}
  For the background knowledge consisting of JCI Assumptions $\emptyset$, $\{1\}$, $\{1,2\}$ or $\{1,2,3\}$,
  the corresponding version of FCI-JCI is sound (in the $\sigma$-separation setting without selection bias).
\end{corollary}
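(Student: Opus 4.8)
The plan is to deduce this corollary from Theorem~\ref{theo:generalizing_soundness_completeness}(i), exactly as in the alternative proof of Theorem~\ref{theo:fci_sound_complete}. Fix a partition of the variables into system variables $\C{I}$ and context variables $\C{K}$, and for $S \in \{\emptyset,\{1\},\{1,2\},\{1,2,3\}\}$ let $\Psi_S$ be the background knowledge that assigns $\Psi_S(\C{G}) = 1$ iff the DMG $\C{G}$ (on vertex set $\C{I}\cup\C{K}$) satisfies the JCI Assumptions indexed by $S$. Let $\Phi$ be the FCI-JCI algorithm for that fixed partition, viewed as a mapping from $\sigma$-independence models to DPAGs. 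Two things then need to be in place: (a) $\Psi_S$ satisfies Assumption~\ref{ass:acybk}, and (b) $\Phi$ is sound for background knowledge $\Psi_S$ under the additional assumption of acyclicity. Ingredient (b) is precisely Theorem~35 of \citet{Mooij++_JMLR_2020}, so the work is in (a).

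For (a), I would check the three conditions of Assumption~\ref{ass:acybk} one by one, all of them reducing to Lemma~\ref{lemm:jci_acyclification} together with Proposition~\ref{prop:acyclification}. Condition (i): every DMG has at least one acyclification (e.g.\ $\C{G}^{\acy}$), and by Lemma~\ref{lemm:jci_acyclification} any acyclification of a DMG satisfying the JCI subset $S$ again satisfies the JCI subset $S$; hence $\acy(\C{G},\Psi_S)$ is nonempty whenever $\Psi_S(\C{G})=1$. Condition (ii): if $i \in \ansub{\C{G}}{j}$, then Proposition~\ref{prop:acyclification}(\ref{prop:acyclification_anc}) yields an acyclification $\C{G}'$ of $\C{G}$ with $i \in \ansub{\C{G}'}{j}$, and by Lemma~\ref{lemm:jci_acyclification} this $\C{G}'$ still satisfies $\Psi_S$. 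Condition (iii): if $i \notin \ansub{\C{G}}{j}$, then Proposition~\ref{prop:acyclification}(\ref{prop:acyclification_non_anc}) gives $i \notin \ansub{\C{G}'}{j}$ for \emph{every} acyclification $\C{G}'$ of $\C{G}$, in particular for those in $\acy(\C{G},\Psi_S)$. For the special case $S = \emptyset$ the background knowledge is vacuous and Assumption~\ref{ass:acybk} reduces directly to Proposition~\ref{prop:acyclification}, just as noted in the proof of Theorem~\ref{theo:fci_sound_complete}.

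With (a) and (b) established, Theorem~\ref{theo:generalizing_soundness_completeness}(i) immediately gives that FCI-JCI is sound for background knowledge $\Psi_S$ in the $\sigma$-separation setting without selection bias, for each of the four choices of $S$, which is the assertion of the corollary.

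\textbf{Anticipated main obstacle.}
There is no substantial mathematical obstacle: given Lemma~\ref{lemm:jci_acyclification} the whole argument is bookkeeping over the four subsets $S$. The only point requiring care is that FCI-JCI, unlike plain FCI, formally consumes not just the independence model but also the designation of which variables are context variables; to fit it into the ``mapping from independence models to DPAGs'' template of Theorem~\ref{theo:generalizing_soundness_completeness} one fixes the system/context partition throughout. This is harmless because acyclification preserves the vertex set, hence the partition, and Lemma~\ref{lemm:jci_acyclification} guarantees that it preserves the validity of each JCI assumption, so $\Phi$ and $\Psi_S$ are both well defined and consistent on that fixed vertex set.
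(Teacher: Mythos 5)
Your proposal is correct and follows essentially the same route as the paper: the paper also derives the corollary by noting that Lemma~\ref{lemm:jci_acyclification} (together with Proposition~\ref{prop:acyclification}) makes each JCI subset satisfy Assumption~\ref{ass:acybk}, and then invoking Theorem~\ref{theo:generalizing_soundness_completeness}(i) with the acyclic soundness of FCI-JCI from Theorem~35 of \citet{Mooij++_JMLR_2020}. Your explicit verification of the three conditions of Assumption~\ref{ass:acybk}, and the remark about fixing the system/context partition, merely spell out what the paper dismisses as "trivially implies."
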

We can therefore also apply Propositions \ref{prop:cdpag_nonancestors} and \ref{prop:identify_nonconfounders} to
read off the absence of indirect causal relations and confounding from the DPAG output by the FCI-JCI algorithm,
and Propositions \ref{prop:identifiable_direct_noncauses} and \ref{prop:identifiable_direct_causes} to read off the absence or presence of direct causal relations.
Furthermore, it is clear from its definition that all unshielded triples in the DPAG that FCI-JCI outputs have been oriented according to FCI rule $\C{R}0$.
Therefore, we can also apply Proposition \ref{prop:cdpag_ancestors_dmg} to read off the presence of indirect causal relations from the DPAG output by the FCI-JCI algorithm.

Under all three JCI assumptions, stronger results have been derived. 
In particular, completeness of FCI-JCI has been shown \citep[Theorem 38][]{Mooij++_JMLR_2020} under the background knowledge of all three JCI Assumptions in the acyclic setting.
This gives with Theorem~\ref{theo:generalizing_soundness_completeness}:
\begin{corollary}\label{coro:fcijci123_complete}
  For the background knowledge consisting of JCI Assumptions $\{1,2,3\}$, the FCI-JCI algorithm is 
  arrowhead complete, tail complete and Markov complete (in the $\sigma$-separation setting without selection bias).
\end{corollary}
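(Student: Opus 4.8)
The plan is to derive this corollary from the general transfer principle of Theorem~\ref{theo:generalizing_soundness_completeness}, combined with the already-established acyclic soundness and completeness of FCI-JCI under all three JCI Assumptions. Write $\Psi_{123}$ for the background knowledge ``$\C{G}$ satisfies JCI Assumptions 1, 2 and 3''. The first step is to verify that $\Psi_{123}$ meets the hypothesis of Theorem~\ref{theo:generalizing_soundness_completeness}, namely that Assumption~\ref{ass:acybk} holds for $\Psi_{123}$. This is exactly where Lemma~\ref{lemm:jci_acyclification} does the work. Condition~(\ref{ass:acybk_nonempty}) holds because every DMG admits at least one acyclification (e.g.\ $\C{G}^{\acy}$), and Lemma~\ref{lemm:jci_acyclification} guarantees that this acyclification still satisfies JCI Assumptions 1, 2, 3. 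Condition~(\ref{ass:acybk_anc}) follows by combining Proposition~\ref{prop:acyclification}(\ref{prop:acyclification_anc}), which produces an acyclification witnessing any given ancestral relation $i \in \ansub{\C{G}}{j}$, with Lemma~\ref{lemm:jci_acyclification} again to see that this particular acyclification also satisfies $\Psi_{123}$. Condition~(\ref{ass:acybk_nonanc}) follows directly from Proposition~\ref{prop:acyclification}(\ref{prop:acyclification_non_anc}), since non-ancestral relations are preserved under every acyclification, a fortiori under those satisfying $\Psi_{123}$.

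Next I would invoke the acyclic results for FCI-JCI from \citet{Mooij++_JMLR_2020}: Theorem~35 there establishes soundness of FCI-JCI under JCI Assumptions 1, 2, 3 in the acyclic setting, and Theorem~38 there establishes (arrowhead and tail) completeness under the same assumptions in the acyclic setting. Regarding FCI-JCI as a mapping $\Phi$ from independence models to DPAGs, these say precisely that $\Phi$ is sound and arrowhead/tail complete for the background knowledge $\Psi_{123}$ \emph{under the additional assumption of acyclicity}.

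The final step is to apply Theorem~\ref{theo:generalizing_soundness_completeness}. Part~(ii), applied once for arrowheads and once for tails, upgrades acyclic arrowhead/tail completeness to arrowhead/tail completeness in the $\sigma$-separation setting; and part~(iii), which consumes acyclic soundness together with acyclic arrowhead completeness, upgrades to Markov completeness in the $\sigma$-separation setting. Together these yield exactly the three conclusions claimed. (Soundness in the cyclic setting is already the content of Corollary~\ref{coro:fcijci_sound}, so strictly only these three completeness statements remain.)

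I do not expect a genuine obstacle: every ingredient is in place. The one point demanding care is bookkeeping --- ensuring that the acyclic statements quoted from \citet{Mooij++_JMLR_2020} use the same notion of ``background knowledge'' and the same DPAG output as in Definition~\ref{def:sound_complete_background_knowledge}, and that ``JCI Assumptions'' as formalized in Definition~\ref{def:jci_assumptions} coincide with the conditions under which Theorems~35 and~38 of that paper are stated. Once that matching is confirmed, the corollary is immediate.
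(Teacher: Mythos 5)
Your proposal is correct and follows essentially the same route as the paper: verify Assumption~\ref{ass:acybk} for JCI Assumptions $\{1,2,3\}$ via Lemma~\ref{lemm:jci_acyclification} (together with Proposition~\ref{prop:acyclification}), import the acyclic soundness and completeness of FCI-JCI from \citet{Mooij++_JMLR_2020}, and apply Theorem~\ref{theo:generalizing_soundness_completeness} to transfer arrowhead, tail, and Markov completeness to the $\sigma$-separation setting. Your write-up merely spells out the bookkeeping that the paper leaves implicit.
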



An important feature of Joint Causal Inference under JCI Assumptions $\{1,2,3\}$ is that the direct (non-)targets of interventions need not be known, but can be discovered from the data. 
The sufficient condition provided in Proposition 42 of \citet{Mooij++_JMLR_2020} can be easily generalized to the $\sigma$-separation setting
as well by observing that under JCI Assumptions $\{1,2,3\}$,
there cannot be an inducing walk between a system node and a context node that is into both,
and then applying Proposition~\ref{prop:identifiable_direct_noncauses} and
Proposition~\ref{prop:identifiable_direct_causes}.
\begin{proposition}\label{prop:fci_jci123_direct_intervention_targets}
Let $\C{G}$ be a DMG that satisfies JCI Assumptions $\{1,2,3\}$.
Let $\C{P} = \PAGFCIJCI(\IM_\sigma(\C{G}))$ denote the DPAG output by the corresponding version of FCI-JCI. 
Let $i \in \C{K}$, $j \in \C{I}$. Then:
  \begin{compactenum}
    \item If $i$ is not adjacent to $j$ in $\C{P}$, $i \to j$ is not in $\C{G}$.
    \item If $i \to j$ in $\C{P}$, and 
      for all system nodes $k \in \C{I}$ s.t.\ $i \to k$ in $\C{P}$ and $k \ctc j$ or $k \cto j$ or $k \to j$ in $\C{P}$, the edge $k \to j$ is definitely visible in the DPAG $\C{P}^*$ obtained from $\C{P}$ by replacing the edge between $k$ and $j$ by $k \to j$,
    then $i \to j$ is present in $\C{G}$.
  \end{compactenum}
\end{proposition}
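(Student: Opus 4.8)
The plan is to obtain both statements by reading off direct (non-)causes from the DPAG. Throughout we use that $\C{P}$ contains $\C{G}$ (by soundness of FCI-JCI under JCI Assumptions $\{1,2,3\}$, Corollary~\ref{coro:fcijci_sound}), and for statement 2 we shall also use that this version of FCI-JCI is complete (Corollary~\ref{coro:fcijci123_complete}). Statement 1 is then immediate: if $i$ and $j$ are not adjacent in $\C{P}$ then, by Proposition~\ref{prop:identifiable_direct_noncauses}, the edge $i \to j$ is not present in $\C{G}$.

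The crux of statement 2 is the structural observation about JCI Assumptions $\{1,2,3\}$ highlighted in the surrounding text: there is no inducing walk in $\C{G}$ between a context node and a system node that is into both. To prove it, note first that a context node has no proper ancestors in $\C{G}$ --- no directed edge points into it, since JCI Assumption 1 forbids edges from system to context nodes and JCI Assumption 3 forbids directed edges between context nodes --- so its strongly connected component is a singleton. Suppose, towards a contradiction, that such an inducing walk between $i \in \C{K}$ and $j \in \C{I}$ existed, into both endpoints; consider the maximal initial segment of the walk consisting of context nodes and let $c$ be its last node. This segment is nonempty (the walk is into $i$) and proper (the other endpoint $j$ is a system node). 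The edge from $c$ towards the following system node must point out of $c$ (it is not bidirected by JCI Assumption 2, and not directed into $c$ by JCI Assumption 1), while the preceding edge is bidirected into $c$ (JCI Assumption 3, or the first edge which is into $i$); hence $c$ is a non-collider and a non-endpoint of the walk, and the inducing-walk condition forces its following neighbour into $\sccsub{\C{G}}{c}$ --- contradicting that $\sccsub{\C{G}}{c}$ is a singleton. The same argument rules out any inducing walk into a context node between a context and a system node, which together with the soundness and completeness of FCI-JCI under JCI Assumptions $\{1,2,3\}$ yields that every context--system edge in $\C{P}$ is oriented from the context node to the system node.

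Statement 2 then follows by applying Proposition~\ref{prop:identifiable_direct_causes}(ii) to this $i$ and $j$. Its first hypothesis --- no inducing walk between $i$ and $j$ in $\C{G}$ that is into both --- is exactly the observation above. For its second hypothesis, the vertices $k$ whose visibility of $k \to j$ in $\C{P}^*$ is needed are those arising in the proof of Proposition~\ref{prop:identifiable_direct_causes}(ii): each is the first collider (or, in the collider-free case, the second vertex) of an inducing walk between $i$ and $j$ in $\C{G}$ that is out of $i$, hence is reached from $i$ by a directed walk in $\C{G}$, so $i \in \ansub{\C{G}}{k}$ and $i, k$ are adjacent in $\C{P}$. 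Since $i$ is a context node such a $k$ is a system node (JCI Assumption 1), the edge between $i$ and $k$ in $\C{P}$ is $i \to k$ (by the edge-orientation fact above, or directly by tail completeness applied to $\C{G}$ and the directed walk witnessing $i \in \ansub{\C{G}}{k}$, using that there is no selection bias), and the edge between $k$ and $j$ is not into $k$, hence of the form $k \ctc j$, $k \cto j$ or $k \to j$ (again using the absence of selection bias). These are precisely the system nodes ranged over by the hypothesis of the present proposition, which therefore supplies the visibility of $k \to j$ in $\C{P}^*$ required by Proposition~\ref{prop:identifiable_direct_causes}(ii), and we conclude that $i \to j$ is present in $\C{G}$.

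The main obstacle is the JCI-$\{1,2,3\}$ structural observation of the second paragraph (and its refinement that orients the context-incident edges of $\C{P}$); once that is established, the remainder is bookkeeping to line up the hypotheses with those of Proposition~\ref{prop:identifiable_direct_causes}(ii).
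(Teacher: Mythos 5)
Your proof is correct and takes essentially the same route as the paper: statement 1 via Proposition~\ref{prop:identifiable_direct_noncauses}, and statement 2 by showing that JCI Assumptions $\{1,2,3\}$ rule out any inducing walk between a context and a system node that is into the context node (your maximal-initial-segment-of-context-nodes argument is just a slightly more explicit version of the paper's chain-of-context-colliders argument) and then invoking Proposition~\ref{prop:identifiable_direct_causes}(ii). If anything, you are more careful than the paper in checking that the vertices $k$ quantified over in the hypothesis of Proposition~\ref{prop:identifiable_direct_causes}(ii) coincide with the system nodes $k$ with $i \to k$ quantified over here, which the paper leaves implicit.
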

\begin{proof}
  By Corollary~\ref{coro:fcijci_sound}, $\C{P}$ contains $\C{G}$.
  The first statement then follows directly from Proposition~\ref{prop:identifiable_direct_noncauses}.

  For the second statement, note first that there must be an inducing walk $\pi$ between $i$ and $j$ in $\C{G}$ that is into $j$ by Lemma~\ref{lemm:inducing_walk_into}.
  We will show that any such inducing walk cannot be into $i$.
  On the contrary, suppose that $\pi$ would be into $i$.
  Because of JCI Assumptions 1 and 2, the node on $\pi$ next to $i$ cannot be in $\C{I}$ but must be a context node in $\C{K}$.
  Similarly, all subsequent nodes on the inducing \typo{path}{walk} (except for the final node $j$) must be collider nodes in $\C{K}$ because of JCI Assumptions 1, 2 and 3.  
  But then the final edge of $\pi$ is between a context node and system node $i$ and into the context node, contradicting JCI Assumption 1 or 2.

  The claim now follows from the second statement of Proposition~\ref{prop:identifiable_direct_causes}.
\end{proof}

Furthermore, also Proposition~\ref{prop:noncycles} that allows one to identify the absence
of cycles can be extended to FCI-JCI under JCI Assumptions $\{1,2,3\}$.
\begin{proposition}\label{prop:fci_jci123_noncycles}
  Let $\C{G}$ be a DMG that satisfies JCI Assumptions $\{1,2,3\}$.
  Let $\C{P} = \PAGFCIJCI(\IM_\sigma(\C{G}))$ denote the complete DPAG output by the corresponding version of FCI-JCI.
  Let $i \ne j$ be two nodes in $\C{P}$.
  If $j \in \sccsub{\C{G}}{i}$, then $i \ctc j$ in $\C{P}$, and for all nodes $k \ne i,j$:
  $k \to i$ in $\C{P}$ iff $k \to j$ in $\C{P}$, and $k \oto i$ in $\C{P}$ iff $k \oto j$ in $\C{P}$, and $k \cto i$ in $\C{P}$ iff $k \cto j$ in $\C{P}$.
\end{proposition}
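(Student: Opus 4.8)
The plan is to follow the proof of Proposition~\ref{prop:noncycles} closely, after one structural observation: under JCI Assumptions $\{1,2,3\}$ the nodes $i$ and $j$ are necessarily both system variables. Indeed, JCI Assumption~1 forbids directed edges from system to context variables and JCI Assumption~3 forbids directed edges between context variables, so no context variable has a directed parent, and hence each context variable forms a trivial strongly connected component. Since $j \in \sccsub{\C{G}}{i}$ with $i \ne j$ means $i$ and $j$ lie in a common non-trivial strongly connected component, we get $i,j \in \C{I}$.

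First I would establish $i \ctc j$ in $\C{P}$ exactly as in Proposition~\ref{prop:noncycles}: $i$ and $j$ cannot be $\sigma$-separated, so they are adjacent in $\C{P}$; and by Lemma~\ref{lemm:jci_acyclification} there is an acyclification of $\C{G}$ that still satisfies JCI Assumptions $\{1,2,3\}$ in which $i \to j$, and another in which $i \ot j$ (obtained by replacing the strongly connected component of $\{i,j\}$, which consists entirely of system variables, by a suitably ordered DAG). Soundness of FCI-JCI under these assumptions (Corollary~\ref{coro:fcijci_sound}) then forbids any arrowhead at $i$ or at $j$ on this edge, and since DPAGs have neither undirected nor circle-tail edges, the edge must be $i \ctc j$. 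From this edge one obtains, by the same applications of Lemma~\ref{lemm:Zhang_A1} as in Proposition~\ref{prop:noncycles}, the analogue of \eref{eq:star}: $k \sto i \in \C{P} \iff k \sto j \in \C{P}$ and $k \oto i \in \C{P} \iff k \oto j \in \C{P}$ for every $k \ne i,j$. This step is a structural property of the output PAG and does not invoke the JCI background knowledge.

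It then remains to show that $k \to i$ in $\C{P}$ implies $k \to j$ in $\C{P}$; the converse follows by the symmetry of $i$ and $j$, and $k \cto i \iff k \cto j$ then follows by elimination against the two equivalences of \eref{eq:star}. FCI-JCI differs from FCI only through the additional JCI-specific orientations, which under Assumptions $\{1,2,3\}$ only place arrowheads at system nodes and tails at context nodes on context--system edges, and never orient system--system or context--context edges. If the tail at $k$ on the edge between $k$ and $i$ was produced by one of the standard FCI rules $\C{R}$1, $\C{R}$4a, $\C{R}$8a, $\C{R}$9, $\C{R}$10, then the case analysis of Proposition~\ref{prop:noncycles} transfers verbatim, since each of its steps uses only \eref{eq:star}, Lemma~\ref{lemm:Zhang_A1}, arrowhead completeness (here Corollary~\ref{coro:fcijci123_complete}) and the rule patterns, and is stated for an arbitrary third node. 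If instead $k$ is a context variable and the tail at $k$ came from the JCI orientation of the context--system edge $k \to i$, then by \eref{eq:star} $k$ is adjacent to $j$ with an arrowhead at $j$, and $k \oto j$ is excluded (because $k \oto i$ is; equivalently, under JCI Assumptions $\{1,2,3\}$ there is no inducing walk between a context and a system variable that is into both, as used in Proposition~\ref{prop:fci_jci123_direct_intervention_targets}); hence the edge between $k$ and $j$ is again a context--system edge that is not bidirected, which FCI-JCI orients as $k \to j$ by the same reasoning that produced $k \to i$.

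As in Proposition~\ref{prop:noncycles} itself, the delicate part is the bookkeeping around the order in which the orientation rules fire: one has to confirm that the JCI-specific orientations can be taken to precede rules $\C{R}$1--$\C{R}$10, so that the natural ordering of $\C{R}$1, $\C{R}$4a, $\C{R}$9 before $\C{R}$8a, $\C{R}$10 exploited there is preserved, and that no JCI orientation triggers a standard rule in a configuration falling outside that case analysis. Since the analysis already allows an arbitrary third node and only appeals to \eref{eq:star} and Lemma~\ref{lemm:Zhang_A1}, I expect this to be routine but somewhat tedious rather than a genuine obstacle.
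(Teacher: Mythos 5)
Your argument is correct in substance but follows a genuinely different route from the paper. The paper's proof is a reduction: it invokes the construction from the proof of Theorem 38 in \citet{Mooij++_JMLR_2020}, in which the FCI-JCI output is obtained from an \emph{extended} complete DPAG $\C{P}^*$ that is a genuine complete FCI output (so that Proposition~\ref{prop:noncycles}, Lemma~\ref{lemm:Zhang_A1}, and the rule-ordering argument apply to it verbatim), and then observes that since strongly connected components live entirely among the system variables, the relevant orientations transfer unchanged to $\C{P}$. You instead re-run the case analysis of Proposition~\ref{prop:noncycles} directly on the FCI-JCI output, treating the JCI orientations as an additional tail/arrowhead rule; your preliminary observation that $i,j$ must both be system variables, the derivation of $i \ctc j$ via soundness, and the handling of the case where the tail at a context node $k$ comes from the JCI orientation (using \eref{eq:star} and the absence of an inducing walk into both a context and a system node) are all sound. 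The trade-off is that the two items you defer as ``routine but tedious'' are exactly what the paper's reduction is designed to avoid: Lemma~\ref{lemm:Zhang_A1} is proved only for the complete FCI PAG, and its validity for a PAG carrying extra background-knowledge orientations is not automatic but follows cleanly once one works in $\C{P}^*$; likewise the placement of the JCI orientations relative to $\C{R}1$--$\C{R}10$ is settled by that construction rather than needing a separate interleaving argument. So your proof is viable, but to be complete it would either have to discharge those two points explicitly or, as the paper does, route everything through the extended DPAG where the plain acyclic FCI machinery applies without modification.
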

\begin{proof}
  We make use of a similar strategy as in the proof of \citep[Theorem 38 in][]{Mooij++_JMLR_2020}, which we will not repeat here. Proposition~\ref{prop:noncycles} applies to the extended complete DPAG $\C{P}^*$ constructed in that proof. Since strongly connected components can only occur amongst the system variables, the same orientations will be found in the DPAG $\C{P}$ output by FCI-JCI.
\end{proof}

\section{DISCUSSION AND CONCLUSION}

We have shown that, surprisingly, the FCI algorithm and several of its variants that were designed for
the acyclic setting need not be adapted but directly apply also in the cyclic setting
under the assumptions of the $\sigma$-Markov property, $\sigma$-faithfulness, and the
absence of selection bias. Furthermore, we have provided sufficient conditions
to identify causal features from the DPAG output by FCI and its variants.
For convenience, we state this as a corollary, collecting several of our results.
\begin{corollary}
  Let $\C{M}$ be a simple (possibly cyclic) SCM with graph $\C{G}(\C{M})$ and assume that its distribution
  $\Prb_{\C{M}}(\B{X})$ is $\sigma$-faithful w.r.t.\ the graph $\C{G}(\C{M})$.
  When using consistent conditional independence tests on an i.i.d.\ sample of observational data from the induced distribution $\Prb_{\C{M}}(\B{X})$ of $\C{M}$, FCI provides a consistent estimate $\hat{\C{P}}$ of the DPAG $\PAGFCI(\IM_\sigma(\C{G}(\C{M})))$ that represents the $\sigma$-Markov equivalence class of $\C{G}(\C{M})$. From the estimated DPAG $\hat{\C{P}}$, we obtain consistent estimates for:
   (i) the absence/presence of (possibly indirect) causal relations according to $\C{M}$ via Propositions~\ref{prop:cdpag_ancestors_dmg} and \ref{prop:cdpag_nonancestors}; 
   (ii) the absence of confounders according to $\C{M}$ via Proposition~\ref{prop:identify_nonconfounders};
   (iii) the absence/presence of direct causal relations according to $\C{M}$ via Propositions~\ref{prop:identifiable_direct_noncauses} and \ref{prop:identifiable_direct_causes};
   (iv) the absence of causal cycles according to $\C{M}$ via Proposition~\ref{prop:noncycles}.
\end{corollary}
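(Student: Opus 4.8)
The plan is to assemble the corollary from three already-established ingredients: (a) the $\sigma$-separation Markov property and the causal interpretation of simple SCMs; (b) Theorem~\ref{theo:fci_sound_complete}, which gives soundness and Markov completeness of FCI in the $\sigma$-separation setting; and (c) the feature-extraction results Propositions~\ref{prop:cdpag_ancestors_dmg}--\ref{prop:noncycles}. Throughout, the bridge between the graph $\C{G}(\C{M})$ and the SCM $\C{M}$ is the causal semantics of (simple) SCMs: $i \in \ansub{\C{G}(\C{M})}{j}$ iff $X_i$ is a (possibly indirect) cause of $X_j$; $i \oto j$ in $\C{G}(\C{M})$ iff $X_i$ and $X_j$ are confounded; $i \to j$ in $\C{G}(\C{M})$ iff $X_i$ is a direct cause of $X_j$; and $i \in \sccsub{\C{G}(\C{M})}{j}$ iff $X_i$ and $X_j$ lie on a common causal cycle.

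First I would show that the population conditional independences of $\Prb_{\C{M}}(\B{X})$ are exactly $\IM_\sigma(\C{G}(\C{M}))$: since $\C{M}$ is simple it satisfies the $\sigma$-Markov property with respect to $\C{G}(\C{M})$ (see the Preliminaries), and the assumed $\sigma$-faithfulness yields the converse inclusion. FCI queries this independence model only finitely many times (the variable set $\B{X}$ is fixed), and each query is answered correctly with probability tending to one by the assumed consistency of the independence tests; a union bound then shows that, with probability tending to one, the estimate $\hat{\C{P}}$ equals $\C{P} := \PAGFCI(\IM_\sigma(\C{G}(\C{M})))$, which is exactly consistency of $\hat{\C{P}}$ (the estimand being a discrete object). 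By Theorem~\ref{theo:fci_sound_complete}(\ref{theo:fci_sound_complete_sound}), $\C{P}$ contains $\C{G}(\C{M})$, and together with Markov completeness (and the remark following Theorem~\ref{theo:fci_sound_complete}) $\C{P}$ represents the $\sigma$-Markov equivalence class of $\C{G}(\C{M})$.

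Next I would check that $\C{P}$ meets the hypotheses of each feature-extraction result. Soundness provides ``$\C{P}$ contains $\C{G}(\C{M})$'', which is exactly what Propositions~\ref{prop:cdpag_nonancestors}, \ref{prop:identify_nonconfounders}, \ref{prop:identifiable_direct_noncauses} and \ref{prop:identifiable_direct_causes} require; moreover $\C{P}$ is a \emph{complete} DPAG, being the output of the full FCI orientation phase, so in particular every unshielded triple in $\C{P}$ has been oriented according to rule $\C{R}0$ (the extra hypothesis of Proposition~\ref{prop:cdpag_ancestors_dmg}), and Proposition~\ref{prop:noncycles} applies. Combining Propositions~\ref{prop:cdpag_ancestors_dmg} and~\ref{prop:cdpag_nonancestors} then yields, for each ordered pair $(i,j)$, a sound read-off of whether $i \in \ansub{\C{G}(\C{M})}{j}$, hence of (indirect) causal relations and their absence; Proposition~\ref{prop:identify_nonconfounders} yields a sound read-off of the absence of bidirected edges, hence of confounders; Propositions~\ref{prop:identifiable_direct_noncauses} and~\ref{prop:identifiable_direct_causes} yield a sound read-off of the absence and presence of directed edges, hence of direct causal relations; and Proposition~\ref{prop:noncycles} yields a sound read-off of the absence of a common strongly connected component, hence of causal cycles. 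Since each such verdict is a deterministic function of the DPAG and $\hat{\C{P}} = \C{P}$ with probability tending to one, the sample-based verdicts coincide with the population ones with probability tending to one, i.e., they are consistent.

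The step requiring the most care is not any single deduction but their combination: one must make the chain ``$\sigma$-Markov $+$ $\sigma$-faithful $\Rightarrow$ the independence model equals $\IM_\sigma(\C{G}(\C{M}))$'', ``consistent CI tests $\Rightarrow$ $\hat{\C{P}} = \C{P}$ eventually'', and ``$\C{P}$ is a complete DPAG containing $\C{G}(\C{M})$'' airtight, because only then do all of Propositions~\ref{prop:cdpag_ancestors_dmg}--\ref{prop:noncycles} become applicable; the remainder is the bookkeeping of translating the graphical features of $\C{G}(\C{M})$ into the causal features of $\C{M}$ via the SCM semantics.
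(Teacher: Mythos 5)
Your proposal is correct and follows essentially the same route as the paper, whose proof is just a one-sentence appeal to the general principle that soundness of a constraint-based algorithm (correctness on the true independence model) plus consistent conditional independence tests implies consistency; you have simply filled in the standard details (Markov property plus faithfulness gives $\IM(\C{M}) = \IM_\sigma(\C{G}(\C{M}))$, a union bound over the finitely many tests gives $\hat{\C{P}} = \C{P}$ with probability tending to one, and the feature read-offs are deterministic functions of the complete DPAG).
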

\begin{proof}
In general, soundness of a constraint-based causal discovery algorithm (i.e.,
the correctness of its output when given the true independence model as input)
implies consistency of the algorithm when using appropriate conditional independence tests.
\end{proof}
A similar conclusion can be formulated for the FCI-JCI algorithm:
\begin{corollary}
  Let $\C{M}$ be a simple (possibly cyclic) SCM with graph $\C{G}(\C{M})$ that satisfies JCI Assumptions $\emptyset$, $\{1\}$, $\{1,2\}$ or $\{1,2,3\}$ and assume that its distribution $\Prb_{\C{M}}(\B{X},\B{C})$ is $\sigma$-faithful w.r.t.\ the graph $\C{G}(\C{M})$. 
  When using consistent conditional independence tests on an i.i.d.\ sample of data from the induced joint distribution $\Prb_{\C{M}}(\B{X},\B{C})$ of $\C{M}$, FCI-JCI provides a consistent estimate $\hat{\C{P}}$ of the DPAG $\C{P} := \PAGFCIJCI(\IM_\sigma(\C{G}(\C{M})))$, which contains $\C{G}$.
  From the estimated DPAG $\hat{\C{P}}$, we obtain consistent esimates for:
   (i) the absence/presence of (possibly indirect) causal relations according to $\C{M}$ via Propositions~\ref{prop:cdpag_ancestors_dmg} and \ref{prop:cdpag_nonancestors}; 
   (ii) the absence of confounders according to $\C{M}$ via Proposition~\ref{prop:identify_nonconfounders};
   (iii) the absence/presence of direct causal relations according to $\C{M}$ via Proposition~\ref{prop:identifiable_direct_causes}.

  In the special case that $\C{G}(\C{M})$ satisfies all three JCI Assumptions $\{1,2,3\}$, the DPAG $\hat{\C{P}}$ estimated by FCI-JCI also provides consistent estimates for:
  (iv) the direct intervention targets and non-targets according to $\C{M}$ via Proposition~\ref{prop:fci_jci123_direct_intervention_targets};
  (v) the absence of causal cycles according to $\C{M}$ via Proposition~\ref{prop:fci_jci123_noncycles}.
  Furthermore, the DPAG $\C{P} := \PAGFCIJCI(\IM_\sigma(\C{G}(\C{M})))$ characterizes the DMGs that satisfy JCI Assumptions $\{1,2,3\}$ and are $\sigma$-Markov equivalent to $\C{G}$.
\end{corollary}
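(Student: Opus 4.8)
The plan is to follow the same two-level strategy used for the preceding corollary on plain FCI: first establish every claim at the \emph{population} level (i.e., when the true $\sigma$-independence model $\IM_\sigma(\C{G}(\C{M}))$ is given as input), and then lift to finite samples via the general principle that soundness of a constraint-based algorithm, together with consistent conditional independence tests, yields consistency of the algorithm. The population-level statements are precisely what the soundness, completeness and reading-off results derived earlier provide, so the bulk of the work is assembling those pieces and verifying that their hypotheses hold in the FCI-JCI setting.

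First I would pin down the input. Since $\C{M}$ is a simple SCM, its induced distribution satisfies the $\sigma$-separation Markov property, and by the assumed $\sigma$-faithfulness the converse holds as well; hence the independence model of $\Prb_{\C{M}}(\B{X},\B{C})$ coincides with $\IM_\sigma(\C{G}(\C{M}))$. Consequently FCI-JCI, viewed as a deterministic map from independence models to DPAGs, outputs exactly $\C{P} = \PAGFCIJCI(\IM_\sigma(\C{G}(\C{M})))$ when fed the true independence model, and by Corollary~\ref{coro:fcijci_sound}—which covers each admissible JCI subset $\emptyset,\{1\},\{1,2\},\{1,2,3\}$—this $\C{P}$ contains $\C{G}(\C{M})$.

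Next I would apply the reading-off propositions to $\C{P}$. Because $\C{P}$ contains $\C{G}$, Propositions~\ref{prop:cdpag_ancestors_dmg} and~\ref{prop:cdpag_nonancestors} give the (non-)ancestral conclusions in (i); here I would note explicitly that the hypothesis of Proposition~\ref{prop:cdpag_ancestors_dmg}—that all unshielded triples have been oriented by FCI rule $\C{R}0$—holds automatically, since the FCI-JCI orientation phase begins with $\C{R}0$. Proposition~\ref{prop:identify_nonconfounders} then yields (ii), and Propositions~\ref{prop:identifiable_direct_noncauses} and~\ref{prop:identifiable_direct_causes} yield (iii), all of which require only that $\C{P}$ contain $\C{G}$. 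For the stronger conclusions (iv) and (v) under JCI Assumptions $\{1,2,3\}$, I would invoke Proposition~\ref{prop:fci_jci123_direct_intervention_targets} and Proposition~\ref{prop:fci_jci123_noncycles}, which already build in the JCI-specific structure (no inducing walk into both a system and a context node, and strongly connected components confined to system variables). Finally, the characterization claim follows by combining the soundness of Corollary~\ref{coro:fcijci_sound} with the Markov completeness of Corollary~\ref{coro:fcijci123_complete}: two DMGs satisfying JCI $\{1,2,3\}$ are $\sigma$-Markov equivalent iff FCI-JCI maps them to the same DPAG, so $\C{P}$ indexes exactly the $\sigma$-Markov equivalence class of $\C{G}$ among JCI $\{1,2,3\}$ DMGs.

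The lift to finite samples is routine but should be stated: each feature in (i)--(v) is a deterministic, graph-theoretic function of the output DPAG, and with consistent CI tests the probability that the empirical independence model agrees with $\IM_\sigma(\C{G}(\C{M}))$ on the finitely many conditional independences FCI-JCI queries tends to one, whence $\hat{\C{P}} = \C{P}$ with probability tending to one and every derived estimate is consistent. The main obstacle I anticipate is not in this wrapper but in the bookkeeping of the previous step: confirming that the precise hypotheses of each cited proposition (the $\C{R}0$ orientation needed for Proposition~\ref{prop:cdpag_ancestors_dmg}, the completeness needed for (iv) and (v), and the JCI-specific ruling-out of inducing walks into both a context and a system node used in Proposition~\ref{prop:fci_jci123_direct_intervention_targets}) genuinely hold for the DPAG that FCI-JCI produces under each admissible JCI subset, rather than merely for a generic DPAG containing $\C{G}$.
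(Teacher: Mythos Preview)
Your proposal is correct and follows essentially the same approach as the paper. The paper itself provides no explicit proof for this corollary, relying instead on the one-line argument given for the preceding FCI corollary (``soundness of a constraint-based algorithm implies consistency when using consistent CI tests'') together with the phrase ``A similar conclusion can be formulated for the FCI-JCI algorithm''; your write-up simply spells out in detail the population-level invocations of Corollaries~\ref{coro:fcijci_sound} and~\ref{coro:fcijci123_complete} and of the reading-off propositions that the paper leaves implicit.
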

Obviously, our results apply also in the acyclic setting (where $\sigma$-separation reduces to $d$-separation).



One important limitation of the $\sigma$-faithfulness assumption is that
it excludes the linear and discrete cases. In pioneering work \citet{RichardsonPhD1996} already 
proposed a constraint-based causal discovery algorithm (NL-CCD) that made use of the 
$\sigma$-separation Markov assumption, while assuming only the $d$-faithfulness
assumption (which is weaker than the $\sigma$-faithfulness assumption). 
In future work, we plan to investigate this setting as well, as well as
the possibility of extending our results to a setting that does not rule out selection bias.

\subsubsection*{Acknowledgements}

We are indebted to Jiji Zhang for contributing the proof of Proposition~\ref{prop:PAG=MEC}.
We thank the reviewers for their constructive feedback that helped us improve this paper.
This work was supported by the European Research Council (ERC) under the European Union's Horizon 2020 research and innovation programme (grant agreement 639466).

\newpage


\clearpage

\appendix

\section{PRELIMINARIES}\label{app:preliminaries}

In this section of the Supplementary Material, we briefly state all required definitions, notations and results from the literature to make the paper more self-contained.

\subsection{GRAPHS}

Here we briefly discuss various types of graphs (directed mixed graphs, maximal ancestral graphs, and partial ancestral graphs)
and their properties and relationships from the causal discovery literature. For more details, the reader may consult
the relevant literature \citep{SGS2000,RichardsonSpirtes02,Zhang2006,Zhang2008_AI,Zhang2008_JMLR}.

\subsubsection{DIRECTED MIXED GRAPHS (DMGs)}
A \emph{Directed Mixed Graph} (DMG) is a graph $\C{G} = \langle \C{V},\C{E},\C{F} \rangle$ with 
nodes $\C{V}$ and two types of edges: \emph{directed} edges $\C{E} \subseteq \{(i,j) : i, j \in \C{V}, i \ne j\}$, and 
\emph{bidirected} edges $\C{F} \subseteq \{\{i,j\} : i, j \in \C{V}, i \ne j\}$. 
We will denote a directed edge $(i,j) \in \C{E}$ as $i \rightarrow j$ or $j \ot i$, and
call $i$ a \emph{parent} of $j$. We denote all parents of
$j$ in the graph $\C{G}$ as $\pasub{\C{G}}{j} := \{i \in \C{V} : i \to j \in \C{E}\}$. We do not
allow for self-cycles $i \to i$ here, but multiple edges (at most one
of each type, i.e., at most three) between any pair of distinct nodes are allowed.
We will denote a bidirected edge $\{i,j\} \in \C{F}$ as $i \oto j$ or $j \oto i$.
Two nodes $i,j \in \C{V}$ are called \emph{adjacent in $\C{G}$} if 
if $i\to j \in \C{E}$ or $i \ot j \in \C{E}$ or $i \oto j \in \C{F}$.

A \emph{walk} between two nodes $i,j\in\C{V}$ is a tuple $\langle i_0,e_1,i_1,e_2,i_3,\dots,e_n,i_n \rangle$ 
of alternating nodes and edges in $\C{G}$ ($n \ge 0$), such that all $i_0,\dots,i_n \in \C{V}$,
all $e_1,\dots,e_n \in \C{E} \cup \C{F}$, starting with node $i_0=i$ and ending with node $i_n=j$,
and such that for all $k=1,\dots,n$, the edge $e_k$ connects the two nodes $i_{k-1}$ and
$i_k$ in $\C{G}$. If the walk contains each node at most once, it is called a \emph{path}.
A \emph{trivial walk (path)} consists just of a single node and zero edges.
A \emph{directed walk (path) from $i \in \C{V}$ to $j \in \C{V}$} is a walk (path) between $i$ and $j$ such that every edge 
$e_k$ on the walk (path) is of the form $i_{k-1} \to i_k$, i.e., every edge is directed and points away from $i$. 
By repeatedly taking parents, we obtain the \emph{ancestors} of $j$:
$\ansub{\C{G}}{j} := \{i \in \C{V} : i = i_0 \to i_1 \to \dots \to i_n = j \text{ in } \C{G}\}$.
Similarly, we define the \emph{descendants} of $i$: $\desub{\C{G}}{i} := \{j \in \C{V}: i = i_0 \to i_1 \to \dots \to i_n = j \text{ in } \C{G}\}$. In particular, each node is ancestor and descendant of itself.
A \emph{directed cycle} is a directed path from $i$ to $j$ such that in addition, $j \to i \in \C{E}$.
An \emph{almost directed cycle} is a directed path from $i$ to $j$ such that in addition, $j \oto i \in \C{F}$.
All nodes on directed cycles passing through $i \in \C{V}$ together form the \emph{strongly connected component}
$\sccsub{\C{G}}{i}:= \ansub{\C{G}}{i} \cap \desub{\C{G}}{i}$ of $i$.
We extend the definitions to sets $I \subseteq \C{V}$ by setting $\ansub{\C{G}}{I} := \cup_{i\in I} \ansub{\C{G}}{i}$, 
and similarly for $\desub{\C{G}}{I}$ and $\sccsub{\C{G}}{I}$.  

A directed mixed graph $\C{G}$ 
is \emph{acyclic} if it does not contain any directed cycle, in which case it is known as an 
\emph{Acyclic Directed Mixed Graph (ADMG)}. A directed mixed graph that does not contain 
bidirected edges is known as a \emph{Directed Graph (DG)}. If a directed mixed graph does not
contain bidirected edges and is acyclic, it is called a \emph{Directed Acyclic Graph (DAG)}.

A node $i_k$ on a walk (path) $\pi = \langle i_0,e_1,i_1,e_2,i_3,\dots,e_n,i_n \rangle$ in $\C{G}$ is said to 
form a \emph{collider on $\pi$} if it is a non-endpoint node ($1 \le k < n$) and the two edges $e_k,e_{k+1}$ 
meet head-to-head on their shared node $i_k$ (i.e., if the two
subsequent edges are of the form $i_{k-1} \to i_k \ot i_{k+1}$, $i_{k-1} \oto i_k \ot i_{k+1}$, 
$i_{k-1} \to i_k \oto i_{k+1}$, or $i_{k-1} \oto i_k \oto i_{k+1}$). Otherwise (that is, if it is an endpoint node, i.e., $k=0$ or $k=n$, or if the two subsequent edges are of the form $i_{k-1} \to i_k \to i_{k+1}$, $i_{k-1} \ot i_k \ot i_{k+1}$,
$i_{k-1} \ot i_k \to i_{k+1}$, $i_{k-1} \oto i_k \to i_{k+1}$, or $i_{k-1} \ot i_k \oto i_{k+1}$), 
$i_k$ is called a \emph{non-collider on $\pi$}.

The important notion of $d$-separation was first proposed by \citet{Pearl1986} in the context of DAGs:
\begin{definition}[$d$-separation]
We say that a walk $\langle i_0 \dots i_n \rangle$ in DMG $\C{G} = \langle \C{V},\C{E},\C{F} \rangle$ is \emph{$d$-blocked by $C \subseteq \C{V}$} if:
\begin{compactenum}[(i)]
  \item its first node $i_0 \in C$ or its last node $i_n \in C$, or
  \item it contains a collider $i_k \notin \ansub{\C{G}}{C}$, or
  \item it contains a non-collider $i_k \in C$.
\end{compactenum}
If all paths in $\C{G}$ between any node in set $A \subseteq \C{V}$ and any node in set $B \subseteq \C{V}$
are $d$-blocked by a set $C \subseteq \C{V}$, we say that $A$ is \emph{$d$-separated}
from $B$ by $C$, and we write $\dsep{A}{B}{C}{\C{G}}$.
\end{definition}

In the general cyclic case, the notion of $d$-separation is too strong, as was already pointed out by
\citet{Spirtes95}. A solution is to replace it with a non-trivial generalization of $d$-separation, 
known as $\sigma$-separation:
\begin{definition}[$\sigma$-separation \citep{ForreMooij_1710.08775}]
We say that a walk $\langle i_0 \dots i_n \rangle$ in DMG $\C{G} = \langle \C{V},\C{E},\C{F} \rangle$ is \emph{$\sigma$-blocked by $C \subseteq \C{V}$} if:
\begin{compactenum}[(i)]
\item its first node $i_0 \in C$ or its last node $i_n \in C$, or
\item it contains a collider $i_k \notin \ansub{\C{G}}{C}$, or
\item it contains a non-collider $i_k \in C$ that points to a 
neighboring node on the walk in another strongly connected component (i.e.,
$i_{k-1} \to i_k \to i_{k+1}$ or $i_{k-1}\oto i_k \to i_{k+1}$ with $i_{k+1} \notin \sccsub{\C{G}}{i_k}$,
$i_{k-1} \ot i_k \ot i_{k+1}$ or $i_{k-1}\ot i_k \oto i_{k+1}$ with $i_{k-1} \notin \sccsub{\C{G}}{i_k}$,
or $i_{k-1} \ot i_k \to i_{k+1}$ with $i_{k-1} \notin \sccsub{\C{G}}{i_k}$ or $i_{k+1} \notin \sccsub{\C{G}}{i_k}$).
\end{compactenum}
If all paths in $\C{G}$ between any node in set $A \subseteq \C{V}$ and any node in set $B \subseteq \C{V}$
are $\sigma$-blocked by a set $C \subseteq \C{V}$, we say that $A$ is \emph{$\sigma$-separated}
from $B$ by $C$, and we write $\sigmasep{A}{B}{C}{\C{G}}$.
\end{definition}

For a DMG $\C{G}$, define its \emph{$d$-independence model} to be
$$\IM_d(\C{G}) := \{ \langle A, B, C \rangle : A, B, C \subseteq \C{V}, \dsep{A}{B}{C}{\C{G}} \},$$
i.e., the set of all $d$-separations entailed by the graph, and its
\emph{$\sigma$-independence model} to be
$$\IM_\sigma(\C{G}) := \{ \langle A, B, C \rangle : A, B, C \subseteq \C{V}, \sigmasep{A}{B}{C}{\C{G}} \},$$
i.e., the set of all $\sigma$-separations entailed by the graph. 
For ADMGs, $\sigma$-separation is equivalent to $d$-separation, and hence, if $\C{G}$ is acyclic, then
$\IM_d(\C{G}) = \IM_\sigma(\C{G})$.
We call two DMGs $\C{G}_1$ and $\C{G}_2$ \emph{$\sigma$-Markov equivalent} if $\IM_\sigma(\C{G}_1) = \IM_\sigma(\C{G}_2)$,
and \emph{$d$-Markov equivalent} if $\IM_d(\C{G}_1) = \IM_d(\C{G}_2)$.

\subsubsection{DIRECTED MAXIMAL ANCESTRAL GRAPHS (DMAGs)}

The following graphical notion will be necessary for the definition of DMAGs.\footnote{We propose
an extension of this notion for DMGs in Definition~\ref{def:inducing_path}.}
\begin{definition}\label{def:inducing_path_acyclic}
Let $\C{G} = \langle \C{V}, \C{E}, \C{F} \rangle$ be an acyclic directed mixed graph (ADMG). 
An \emph{inducing path\footnote{This was called ``primitive inducing path'' in \citep{RichardsonSpirtes02}.} between two nodes $i,j \in \C{V}$} is a path in $\C{G}$ between $i$ and $j$ on which every node 
(except for the endnodes) is a collider on the path and an ancestor in $\C{G}$ of an endnode of the path.
\end{definition}
DMAGs can now be defined as follows \citep{Zhang2008_JMLR}:
\begin{definition}\label{def:DMAG}
  A directed mixed graph $\C{G} = \langle \C{V}, \C{E}, \C{F} \rangle$ is called a \emph{directed maximal ancestral graph (DMAG)} if all of the following conditions hold: 
  \begin{compactenum}
    \item Between any two different nodes there is at most one edge, and there are no self-cycles;
    \item The graph contains no directed or almost directed cycles (``ancestral'');
    \item There is no inducing path between any two non-adjacent nodes (``maximal'').
  \end{compactenum}
\end{definition}
With the following procedure from \citet{RichardsonSpirtes02}, one can define the DMAG induced by an ADMG:
\begin{definition}\label{def:DMAG_induced_by_ADMG}
  Let $\C{G} = \langle \C{V}, \C{E}, \C{F} \rangle$ be an ADMG. 
  The \emph{directed maximal ancestral graph induced by $\C{G}$} is denoted $\DMAG(\C{G})$ and is defined as
  $\DMAG(\C{G}) = \langle \tilde{\C{V}}, \tilde{\C{E}}, \tilde{\C{F}} \rangle$ such that $\tilde{\C{V}} = \C{V}$ and
  for each pair $u,v \in \C{V}$ with $u \ne v$, there is an edge in $\DMAG(\C{G})$ between $u$ and $v$ if and only if 
  there is an inducing path between $u$ and $v$ in $\C{G}$, and in that case the edge in $\DMAG(\C{G})$ connecting $u$ and $v$ is:
    $$\begin{cases}
      \text{$u \to  v$} & \text{if $u \in \ansub{\C{G}}{v}$}, \\
      \text{$u \ot  v$} & \text{if $v \in \ansub{\C{G}}{u}$}, \\
      \text{$u \oto v$} & \text{if $u \not\in \ansub{\C{G}}{v}$ and $v \not\in \ansub{\C{G}}{u}$.}
    \end{cases}$$
\end{definition}
This construction preserves the (non-)ancestral relations as well as the $d$-separations/connections.
We sometimes identify a DMAG $\C{H}$ with the set of ADMGs $\C{G}$ that induce $\C{H}$, i.e., such that $\DMAG(\C{G}) = \C{H}$.
For a DMAG $\C{H}$, we define its \emph{independence model} to be
$$\IM(\C{H}) := \{ \langle A, B, C \rangle : A, B, C \subseteq \C{V}, \dsep{A}{B}{C}{\C{H}} \},$$
i.e., the set of all $d$-separations entailed by the DMAG. We call two DMAGs
$\C{H}_1$ and $\C{H}_2$ \emph{Markov equivalent} if $\IM(\C{H}_1) = \IM(\C{H}_2)$. 

\subsubsection{DIRECTED PARTIAL ANCESTRAL GRAPHS (DPAGs)}

It is often convenient when performing causal reasoning to be able to represent a set of DMAGs in a compact way. 
For this purpose, \emph{partial ancestral graphs (PAGs)} have been introduced \citep{Zhang2006}.
Again, since we are assuming no selection bias for simplicity, we will only discuss \emph{directed} PAGs (DPAGs), that is, PAGs without undirected or circle-tail edges, i.e., edges of the form $\{\ttt,\ttc,\ctt\}$.
\begin{definition}
  We call a mixed graph $\C{G} = \langle\C{V},\C{E}\rangle$ with nodes $\C{V}$ and edges $\C{E}$ of the types $\{\to,\ot,\otc,\oto,\ctc,\cto\}$ a \emph{directed partial ancestral graph (DPAG)} if all of the following conditions hold:
  \begin{compactenum}
    \item Between any two different nodes there is at most one edge, and there are no self-cycles;
    \item The graph contains no directed or almost directed cycles (``ancestral'');
    \item There is no inducing path between any two non-adjacent nodes (``maximal'').
  \end{compactenum}
\end{definition}

We extend the definitions of (directed) walks, (directed) paths and colliders for directed mixed graphs to apply also to DPAGs. 
Edges of the form $i \ot j, i \otc j, i \oto j$ are called \emph{into $i$}, and similarly, edges of the form $i \to j, i \cto j, i \oto j$ are called \emph{into $j$}. Edges of the form $i \to j$ and $j \ot i$ are called \emph{out of $i$}.

Given a DMAG or DPAG, its induced \emph{skeleton} is an undirected graph with the same nodes and with an edge between any pair of nodes if and only if the two nodes are adjacent in the DMAG or DPAG.

One often identifies a DPAG with the set of all DMAGs that have the same skeleton as the DPAG, have an arrowhead (tail) on each edge mark for which the DPAG has an arrowhead (tail) at that corresponding edge mark, and for each circle in the DPAG, have either an arrowhead or a tail at the corresponding edge mark. 
We then say that the DPAG \emph{contains} these DMAGs.
Since each ADMG induces a unique DMAG, we can say that a DPAG contains an ADMG if and only if it contains the DMAG
induced by it.\footnote{In Definition~\ref{def:dpag_contains_dmg}, we propose to extend this to the notion that a DPAG contains a DMG.}


\subsection{FAST CAUSAL INFERENCE (FCI)}

When given as input an independence model $\IM(\C{H})$ of a DMAG $\C{H}$, FCI outputs a DPAG $\PAGFCI(\IM(\C{H}))$ that contains $\C{H}$ and is maximally informative \citep{Zhang2008_AI}, i.e., each edge mark that is identifiable from the independence model of $\C{H}$ is oriented as such in the DPAG. This DPAG is often referred to as the \emph{complete} DPAG for $\C{H}$.

The following key result seems to be generally known in the field, although we could not easily find a proof in the literature.\footnote{For DMAGs and POIPGs, this was already known for an earlier version of FCI; see Corollary 6.4.1 in \citet{SGS2000} and the proof in \citet{SpirtesVerma92}.}
The proof we provide here is due to Jiji Zhang [private communication].
\begin{proposition}\label{prop:PAG=MEC}
  Two MAGs $\C{H}_1,\C{H}_2$ are $d$-Markov equivalent if and only if $\PAGFCI(\IM_d(\C{H}_1)) = \PAGFCI(\IM_d(\C{H}_2))$.
\end{proposition}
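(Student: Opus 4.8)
The forward implication requires no real work: $\PAGFCI$ is by definition a deterministic mapping from independence models to DPAGs, so $\IM_d(\C{H}_1) = \IM_d(\C{H}_2)$ immediately yields $\PAGFCI(\IM_d(\C{H}_1)) = \PAGFCI(\IM_d(\C{H}_2))$. The plan is therefore to prove the converse by contraposition: assuming $\C{H}_1$ and $\C{H}_2$ are not $d$-Markov equivalent, I would exhibit an explicit difference between $\C{P}_1 := \PAGFCI(\IM_d(\C{H}_1))$ and $\C{P}_2 := \PAGFCI(\IM_d(\C{H}_2))$.

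The one external ingredient I would invoke is the characterization of Markov equivalence of MAGs of \citet{Ali++2009}: two MAGs are $d$-Markov equivalent if and only if they have the same skeleton and the same colliders with order. Hence, if $\C{H}_1$ and $\C{H}_2$ are not $d$-Markov equivalent, there are two cases. If their skeletons differ, then, since $\C{P}_i$ contains $\C{H}_i$ by the soundness of FCI \citep{Zhang2008_AI} and a DPAG containing a MAG shares its skeleton, the skeletons of $\C{P}_1$ and $\C{P}_2$ differ as well, so $\C{P}_1 \neq \C{P}_2$ and we are done.

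In the remaining case $\C{H}_1$ and $\C{H}_2$ have the same skeleton but differ in their colliders with order, and by Lemma~3.13 of \citet{Ali++2009} I may assume without loss of generality that some triple $\langle a,b,c\rangle$ is a collider with order in $\C{H}_1$ but is not even a collider in $\C{H}_2$. On the one hand, a collider with order is a collider in \emph{every} MAG that is $d$-Markov equivalent to $\C{H}_1$ (again by the characterization of \citet{Ali++2009}), so both edge marks at $b$ on this triple are invariant over the $d$-Markov equivalence class of $\C{H}_1$, hence are oriented as arrowheads in the complete (maximally informative) DPAG $\C{P}_1$ \citep{Zhang2008_AI}; thus $a \sto b \ots c$ in $\C{P}_1$. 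On the other hand, $\C{P}_2$ contains $\C{H}_2$, so every arrowhead of $\C{P}_2$ is an arrowhead of $\C{H}_2$; since $\langle a,b,c\rangle$ is not a collider in $\C{H}_2$, at least one of the two marks at $b$ is a tail or a circle in $\C{P}_2$, so $\C{P}_2$ does not contain this collider. Hence $\C{P}_1 \neq \C{P}_2$, completing the contrapositive.

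The forward direction and the skeleton case are routine; the main obstacle is the second case, where care is needed because a ``collider with order'' may --- unlike an unshielded collider --- lie on a shielded (discriminating) triple and be oriented only by the later FCI orientation rules, so one must invoke the precise statements of the Ali--Richardson--Spirtes characterization and their Lemma~3.13 to be sure that such colliders are genuine invariants of the $d$-Markov equivalence class and are therefore guaranteed to be oriented in the complete DPAG by the maximal-informativeness part of \citet{Zhang2008_AI}. No new combinatorial argument is required beyond quoting those results.
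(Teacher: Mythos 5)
Your proposal is correct and follows essentially the same route as the paper's proof: the Markov-equivalent direction is dismissed as immediate, and the contrapositive of the other direction is handled via the Ali--Richardson--Spirtes characterization (same skeleton and colliders with order), their Lemma~3.13 to pass from ``collider with order'' to ``collider,'' and the soundness plus arrowhead completeness of FCI to exhibit a collider present in one complete DPAG but not the other. Your write-up merely spells out in more detail the final step that the paper compresses into ``because of the soundness and arrowhead completeness of FCI, their corresponding PAGs are not identical.''
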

\begin{proof}
We only give a proof for the ``if'' implication, the ``only if'' implication being obvious.
\citet{Ali++2009} showed that two MAGs are Markov equivalent if and only if they have the same skeletons and colliders with order.
This implies that colliders with order in any MAG are invariant in the corresponding Markov equivalence class and, by the soundness and arrowhead completeness of FCI, these will appear as colliders in the corresponding PAG. 
Consider two MAGs $\C{H}_1$ and $\C{H}_2$ that are not Markov equivalent. 
If they have different skeletons, their corresponding PAGs are not identical.
If they do have the same skeletons, then there must be at least one collider with order in $\C{H}_1$ that is not a collider with order in $\C{H}_2$, or vice versa.
By Lemma 3.13 in \citet{Ali++2009}, this would imply that there is at least one collider with order in $\C{H}_1$ that is not a collider in $\C{H}_2$, or vice versa.
Hence, because of the soundness and arrowhead completeness of FCI, their corresponding PAGs are not identical.
\end{proof}

Another important property of the FCI algorithm that we use is the following. 
It was stated in a different but equivalent formulation by \citet[Lemma 4.1, ][]{Ali++2005}.
\begin{lemma}[Lemma A.1 in \citep{Zhang2008_AI}]\label{lemm:Zhang_A1}
Let $\C{H}$ be a MAG and denote by $\C{P} = \PAGFCI(\IM(\C{H}))$ the corresponding complete PAG output by FCI.
Then for any three distinct vertices $a,b,c$:
if $a \sto b \cts c$ in $\C{P}$, then $a \sto c$ in $\C{P}$;
furthermore, if $a \to b \cts c$ in $\C{P}$, then $a \oto c$ is not in $\C{P}$.
\end{lemma}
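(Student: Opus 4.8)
The plan is to reason entirely in terms of the Markov equivalence class of $\C{H}$ --- i.e.\ the set of MAGs represented by the complete PAG $\C{P}$ --- using three facts: (a) by soundness and (arrowhead/tail) completeness of FCI \citep{Zhang2008_AI}, a mark in $\C{P}$ is a circle precisely when the corresponding mark varies over the class; (b) unshielded colliders (the order-$0$ colliders with order) are invariant over the class \citep{Ali++2009}; and (c) every MAG in the class is ancestral, i.e.\ has no directed or almost directed cycles. I describe the argument in the case without selection bias, which is the only case the paper uses; the general case is analogous, with undirected edges playing the role of tails.

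\emph{Step 1: $a$ and $c$ are adjacent in $\C{P}$.} Suppose not, so $\langle a,b,c\rangle$ is an unshielded triple. Since $a \sto b$ in $\C{P}$, the $a$--$b$ edge is into $b$ in every MAG of the class. By invariance of unshielded colliders, either $b$ is a collider on $\langle a,b,c\rangle$ in every MAG of the class, or a non-collider in every such MAG. In the first case FCI rule $\C{R}0$ would put an arrowhead at $b$ on the $b$--$c$ edge; in the second case, as the $a$--$b$ edge is always into $b$, the $b$--$c$ edge must always have a tail at $b$, and FCI rule $\C{R}1$ would orient $b \to c$. Either way the mark at $b$ on the $b$--$c$ edge of $\C{P}$ would not be a circle, contradicting $b \cts c$. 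Hence $a$ and $c$ are adjacent in $\C{P}$.

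\emph{Step 2: a witnessing MAG, and the ``furthermore'' part.} Since the mark at $b$ on the $b$--$c$ edge of $\C{P}$ is a circle, there is a MAG $\C{H}_0$ in the class with a tail at $b$ there, i.e.\ $b \to c$ in $\C{H}_0$. In $\C{H}_0$ the $a$--$b$ edge is into $b$, so it is $a \to b$ or $a \oto b$; if the $a$--$c$ edge were out of $c$, say $c \to a$, then $c \to a \to b \to c$ would be a directed cycle (if $a \to b$), or $b \to c \to a$ with $a \oto b$ an almost directed cycle (if $a \oto b$), both impossible in a MAG. Hence the $a$--$c$ edge is into $c$ in $\C{H}_0$. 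For the ``furthermore'': if $a \to b$ in $\C{P}$, then $a \to b \to c$ in $\C{H}_0$, so $a \in \ansub{\C{H}_0}{c}$, and ancestrality forbids the edge $a \oto c$ in $\C{H}_0$; since this edge is therefore not bidirected in some MAG of the class, the edge between $a$ and $c$ in $\C{P}$ cannot be $a \oto c$.

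\emph{Step 3: the arrowhead at $c$ is invariant (the crux).} It remains to show the $a$--$c$ edge is into $c$ in \emph{every} MAG of the class, so that the arrowhead appears in $\C{P}$. Suppose some MAG $\C{H}'$ in the class has $c \to a$. Since $a \sto b$ is invariant, the cycle argument of Step~2 shows the $b$--$c$ edge in $\C{H}'$ cannot have a tail at $b$, hence is into $b$, so $b$ is a collider on the (now shielded) triple $\langle a,b,c\rangle$ in $\C{H}'$, whereas it is a non-collider on that triple in $\C{H}_0$. Deriving a contradiction from this is the real work. One route is an induction on the provenance of the arrowhead at $b$ on the $a$--$b$ edge, i.e.\ on the FCI rule ($\C{R}0$--$\C{R}4$) that first oriented it: for instance, if it arose from an unshielded collider $a \sto b \ots x$, then $x$ must be adjacent to $c$ (otherwise $\langle x,b,c\rangle$ would be unshielded and would force the mark at $b$ on the $b$--$c$ edge, contradicting the circle), and then $c \to a$ together with the $x$--$c$ edge yields a path $x$--$c$--$a$ on which $c$ is a non-collider, producing a $d$-connection between $x$ and $a$ (for a suitable conditioning set) that is absent in $\C{H}_0$ --- contradicting Markov equivalence; the remaining rules are handled by analogous but longer structural arguments tracing back along the chain of edges that forced the arrowhead. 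Alternatively, one appeals directly to the characterisation of Markov equivalence by skeleton and colliders-with-order together with the discriminating-path machinery of \citet{Ali++2009} to show that $c \to a$ in $\C{H}'$ is incompatible with $\C{H}'$ being Markov equivalent to $\C{H}_0$. I expect this case analysis to be the main obstacle; Steps~1, 2 and the ``furthermore'' part are routine.
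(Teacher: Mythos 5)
This lemma is not proved in the paper at all: it is imported verbatim from the literature (Lemma A.1 of \citet{Zhang2008_AI}, equivalently Lemma 4.1 of \citet{Ali++2005}), so there is no in-paper argument to compare against; the question is only whether your from-scratch attempt actually establishes the statement. Your Step 1 (adjacency of $a$ and $c$ via the invariance of unshielded colliders and rules $\C{R}0$/$\C{R}1$), your Step 2 (existence of a witnessing MAG $\C{H}_0$ with $b \to c$, and the ancestrality argument showing the $a$--$c$ edge is into $c$ in $\C{H}_0$), and the ``furthermore'' clause are all correct and complete.

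The gap is Step 3, and you have named it yourself: it carries essentially the entire content of the claim $a \sto c \in \C{P}$, and it is not proved. Showing that the arrowhead at $c$ holds in \emph{every} Markov-equivalent MAG is the whole difficulty, because a \emph{shielded} triple may in general flip between collider and non-collider across Markov-equivalent MAGs (already for DAGs: the complete graph on three nodes), so the mere disagreement between $\C{H}'$ and $\C{H}_0$ on the triple $\langle a,b,c\rangle$ is not by itself a contradiction. The contradiction must be manufactured from the invariance of the arrowhead at $b$ on the $a$--$b$ edge, and your ``induction on the provenance of that arrowhead'' is only sketched for the $\C{R}0$ case --- and even there the claimed $d$-connection between $x$ and $a$ in $\C{H}'$ that is absent in $\C{H}_0$ is asserted rather than exhibited (you would need to produce a concrete conditioning set separating $x$ and $a$ in $\C{H}_0$ but not in $\C{H}'$). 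The cases where the arrowhead at $b$ was placed by $\C{R}2$--$\C{R}4$, in particular the discriminating-path rule $\C{R}4$, are precisely where the published proofs of this lemma do their work, and deferring to ``the discriminating-path machinery of \citet{Ali++2009}'' wholesale amounts to citing the result you are trying to prove. As a self-contained proof the attempt is therefore incomplete; as a reduction of the lemma to the invariance of colliders with order it is a reasonable outline, but the decisive step is missing.
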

Since this property is about arrowhead completeness, it already holds for the DPAG constructed in the first stage of the FCI 
algorithm, after running the arrowhead orientation rules $\C{R}$0--$\C{R}$4, but before running the tail orientation rules
$\C{R}$5--$\C{R}$10.

\subsection{STRUCTURAL CAUSAL MODELS (SCMs)}

In this subsection we state some of the basic definitions and results regarding Structural Causal Models.
Structural Causal Models (SCMs), also known as Structural Equation Models (SEMs), were
introduced a century ago by \citep{Wright1921} and popularized in AI by \citet{Pearl2009}.
We follow here the treatment in \citet{Bongers++_1611.06221v3} because it deals with cycles
in a rigorous way.
\begin{definition}\label{def:SCM}
A Structural Causal Model (SCM) is a tuple $\C{M} = \langle \C{I}, \C{J}, \BC{X}, \BC{E}, \B{f}, \Prb_{\BC{E}} \rangle$ of:
\begin{compactenum}[(i)]
\item a finite index set $\C{I}$ for the endogenous variables in the model;
\item a finite index set $\C{J}$ for the latent exogenous variables in the model (disjoint from $\C{I}$);
\item a product of standard measurable spaces $\BC{X} = \prod_{i \in \C{I}} \C{X}_i$, which define the
    domains of the endogenous variables; 
\item a product of standard measurable spaces $\BC{E} = \prod_{j \in \C{J}} \C{E}_j$, which define the
    domains of the exogenous variables;
\item a measurable function $\B{f} : \BC{X} \times \BC{E} \to \BC{X}$, the \emph{causal
    mechanism};
\item a product probability measure $\Prb_{\BC{E}} = \prod_{j \in \C{J}} \Prb_{\C{E}_j}$ on $\BC{E}$ specifying
    the \emph{exogenous distribution}.
\end{compactenum}
\end{definition}
Usually, the components of $\B{f}$ do not depend on all variables, which is formalized by:
\begin{definition}
Let $\C{M}$ be an SCM. We call $i \in \C{I} \cup \C{J}$ a parent of $k \in \C{I}$ if and only if there does not exist a measurable
function $\tilde f_k : \BC{X}_{\C{I}\setminus \{i\}} \times \BC{E}_{\C{J}\setminus\{i\}} \to \C{X}_k$ such that 
  for $\Prb_{\BC{E}}$-almost every $\B{e}$ and for all $\B{x} \in \BC{X}$, $x_k = \tilde f_k(\B{x}_{\C{I}\setminus \{i\}},\B{e}_{\C{J}\setminus \{i\}}) \iff x_k = f_k(\B{x},\B{e})$.
\end{definition}
This definition allows us to define the directed mixed graph associated to an SCM (which corresponds with the
DMG in Figure~\ref{fig:different_graphs}, our starting point for this work):
\begin{definition}
Let $\C{M}$ be an SCM. The induced \emph{graph} of $\C{M}$, denoted $\C{G}(\C{M})$, is defined as the directed mixed graph
with nodes $\C{I}$, directed edges $i_1 \to i_2$ iff $i_1$ is a parent of $i_2$, and bidirected edges
$i_1 \oto i_2$ iff there exists $j \in \C{J}$ such that $j$ is parent of both $i_1$ and $i_2$.
\end{definition}
If $\C{G}(\C{M})$ is acyclic, we call the SCM $\C{M}$ \emph{acyclic}, otherwise we call the SCM \emph{cyclic}. 
If $\C{G}(\C{M})$ contains no bidirected edges, we call the endogenous variables in the SCM $\C{M}$ \emph{causally sufficient}.

A pair of random variables $(\B{X},\B{E})$ is called a \emph{solution} of the SCM $\C{M}$ if
$\B{X} = (X_i)_{i \in \C{I}}$ with $X_i \in \C{X}_i$ for all $i \in \C{I}$,
$\B{E} = (E_j)_{j \in \C{J}}$ with $E_j \in \C{E}_j$ for all $j \in \C{J}$,
the distribution $\Prb(\B{E})$ is equal to the exogenous distribution $\Prb_{\BC{E}}$, and
the \emph{structural equations}:
$$X_i = f_i(\B{X}, \B{E})\quad\text{a.s.}$$
hold for all $i \in \C{I}$.

For acyclic SCMs, solutions exist and have a unique distribution that is determined by the SCM.
This is not generally the case in cyclic SCMs, as these could have no solution at all, or 
could have multiple solutions with different distributions.
\begin{definition}\label{def:unique_solvability_wrt}
An SCM $\C{M}$ is said to be \emph{uniquely solvable w.r.t.\ $\C{O} \subseteq \C{I}$} if there exists 
  a measurable mapping $\B{g}_{\C{O}} : \BC{X}_{(\pasub{\C{G}(\C{M})}{\C{O}}\setminus\C{O})\cap\C{I}} \times \BC{E}_{\pasub{\C{G}(\C{M})}{\C{O}} \cap \C{J}} \to \BC{X}_{\C{O}}$ 
such that for $\Prb_{\BC{E}}$-almost every $\B{e}$ for all $\B{x} \in \BC{X}$:
  \begin{equation*}\begin{split}
    &\B{x}_{\C{O}} = \B{g}_{\C{O}}(\B{x}_{(\pasub{\C{G}(\C{M})}{\C{O}}\setminus\C{O})\cap\C{I}}, \B{e}_{\pasub{\C{G}(\C{M})}{\C{O}}\cap\C{J}}) \\
    &\quad\iff\quad \B{x}_{\C{O}} = \B{f}_{\C{O}}(\B{x},\B{e}).
  \end{split}\end{equation*}
\end{definition}
Loosely speaking: the structural equations for $\C{O}$ have a unique solution for $\B{X}_{\C{O}}$ in terms of the other variables appearing in those equations.
If $\C{M}$ is uniquely solvable with respect to $\C{I}$ (in particular, this holds if $\C{M}$ is acyclic), then it induces a unique \emph{observational distribution 
$\Prb_{\C{M}}(\B{X})$}.

\subsubsection{SIMPLE STRUCTURAL CAUSAL MODELS}\label{sec:simple_scm}

In this work we restrict attention to a particular subclass of SCMs that has many convenient properties:
\begin{definition}\label{def:simple_scm}
An SCM $\C{M}$ is called \emph{simple} if it is uniquely solvable with respect to each subset $\C{O} \subseteq \C{I}$.
\end{definition}
All acyclic SCMs are simple. 
The class of simple SCMs can be thought of as a generalization of acyclic SCMs that allows for (sufficiently weak) cyclic causal relations, but preserves many of the convenient properties that acyclic SCMs have.
Simple SCMs provide a special case of the more general class of \emph{modular} SCMs \citep{ForreMooij_1710.08775}.
One of the key aspects of SCMs---which we do not discuss here in detail because we do not make use of it in this work---is their causal semantics, which is defined in terms of \emph{perfect interventions}.


Simple SCMs have the following convenient properties.
A simple SCM induces a unique observational distribution. The class of simple SCMs is
closed under marginalizations and perfect interventions.
Without loss of generality, one can assume that simple SCMs have no self-cycles. 
The graph of a simple SCM also has a straightforward causal interpretation:
\begin{definition}
  Let $\C{M}$ be a simple SCM. If $i \to j \in \C{G}(\C{M})$ we call $i$ a \emph{direct cause of $j$ according to $\C{M}$}.
  If there exists a directed path $i \to \dots \to j \in \C{G}(\C{M})$, i.e., if $i \in \ansub{\C{G}(\C{M})}{j}$, then we call
  $i$ a \emph{cause of $j$ according to $\C{M}$}. If there exists a bidirected edge $i \oto j \in \C{G}(\C{M})$, then we call
  $i$ and $j$ \emph{confounded according to $\C{M}$}.
\end{definition}
The same graph $\C{G}(\C{M})$ of a simple SCM $\C{M}$ also represents
the conditional independences that must hold in the observational distribution $\Prb_{\C{M}}(\B{X})$ of $\C{M}$.
\citet{ForreMooij_1710.08775} proved a Markov property for the general class of modular SCMs,
but we formulate it here only for the special case of simple SCMs:
\begin{theorem}[$\sigma$-Separation Markov Property]\label{thm:sigma_separation}
For any solution $(\B{X},\B{E})$ of a simple SCM $\C{M}$, and for all subsets $A,B,C \subseteq \C{I}$ 
of the endogenous variables:
$$\sigmasep{A}{B}{C}{\C{G}(\C{M})} \implies \indep{\B{X}_A}{\B{X}_B}{\B{X}_C}{\Prb_{\C{M}}(\B{X})}.$$
\end{theorem}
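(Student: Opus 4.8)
The plan is to obtain this as an immediate specialization of the $\sigma$-separation Markov property for modular SCMs proved by \citet{ForreMooij_1710.08775}. The only thing that genuinely needs to be checked is that a simple SCM, as defined in Definitions~\ref{def:SCM} and~\ref{def:simple_scm}, is a modular SCM in the precise sense used in that reference, and that the graph $\C{G}(\C{M})$ and the $\sigma$-separation criterion we use here are the ones appearing there.

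First I would verify ``simple $\Rightarrow$ modular''. A simple SCM is, by Definition~\ref{def:simple_scm}, uniquely solvable with respect to \emph{every} subset $\C{O} \subseteq \C{I}$, in particular with respect to every ancestrally closed subset; the solution maps $\B{g}_{\C{O}}$ of Definition~\ref{def:unique_solvability_wrt} depend only on the variables indexed by $\pasub{\C{G}(\C{M})}{\C{O}}$, and restricting such a map to a smaller ancestrally closed subset recovers the corresponding solution map (uniqueness forces this compatibility). These are exactly the conditions defining a modular SCM, so the general theory of \citet{ForreMooij_1710.08775} applies; the excerpt already records that simple SCMs form a subclass of modular SCMs. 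Second, the induced graph $\C{G}(\C{M})$ defined here --- directed edges $i_1 \to i_2$ for parents among $\C{I}$, bidirected edges $i_1 \oto i_2$ for a shared latent parent in $\C{J}$ --- is precisely the DMG associated to $\C{M}$ after marginalizing out the exogenous nodes, and the $\sigma$-separation criterion stated in the Preliminaries is, by construction, the one from \citet{ForreMooij_1710.08775} (a non-collider blocks a walk only if it points to a neighbor on the walk in a different strongly connected component).

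With these identifications in place, the cited $\sigma$-separation Markov property yields directly: for any solution $(\B{X},\B{E})$ of $\C{M}$ and any $A,B,C\subseteq\C{I}$, $\sigmasep{A}{B}{C}{\C{G}(\C{M})}$ implies $\indep{\B{X}_A}{\B{X}_B}{\B{X}_C}{\Prb(\B{X})}$, where $\Prb(\B{X})$ denotes the law of the $\B{X}$-marginal of the solution; and since a simple SCM is uniquely solvable with respect to $\C{I}$, a solution exists and all solutions share the same $\B{X}$-marginal, namely the unique observational distribution $\Prb_{\C{M}}(\B{X})$. I do not anticipate a real obstacle: the statement is essentially a corollary of an already published theorem, and the only ``hard part'' is the bookkeeping of matching our restricted setting (standard measurable spaces, product exogenous measure, no self-cycles) to the hypotheses of that theorem.
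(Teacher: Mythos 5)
Your proposal matches the paper exactly: the paper states this theorem without a separate proof, justifying it solely by citing the $\sigma$-separation Markov property for modular SCMs of \citet{ForreMooij_1710.08775} together with the (also cited) fact that simple SCMs are a special case of modular SCMs. The extra bookkeeping you supply (compatibility of the solution maps, uniqueness of the observational distribution) is a reasonable elaboration of that same reduction and introduces no gap.
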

In certain cases, amongst which the acyclic case, the following stronger Markov property holds:
\begin{theorem}[$d$-Separation Markov Property]\label{thm:d_separation}
For any solution $(\B{X},\B{E})$ of an acyclic SCM $\C{M}$, and for all subsets $A,B,C \subseteq \C{I}$
of the endogenous variables:
$$\dsep{A}{B}{C}{\C{G}(\C{M})} \implies \indep{\B{X}_A}{\B{X}_B}{\B{X}_C}{\Prb_{\C{M}}(\B{X})}.$$
\end{theorem}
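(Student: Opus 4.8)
The plan is to deduce this statement directly from the $\sigma$-separation Markov property already established in Theorem~\ref{thm:sigma_separation}, using two simple observations. First, as noted right after Definition~\ref{def:simple_scm}, every acyclic SCM is simple, so Theorem~\ref{thm:sigma_separation} applies to $\C{M}$ (and an acyclic SCM induces a unique observational distribution, so the distribution of any solution is $\Prb_{\C{M}}(\B{X})$). Second, on an acyclic DMG the $\sigma$-separation criterion coincides with the $d$-separation criterion, i.e.\ $\IM_d(\C{G}(\C{M})) = \IM_\sigma(\C{G}(\C{M}))$; this is recorded in the Preliminaries. Concretely, the only difference between $\sigma$-blocking and $d$-blocking is the extra requirement that a non-collider $i_k \in C$ must point to a neighbour on the walk lying in a \emph{different} strongly connected component; when $\C{G}(\C{M})$ is acyclic every strongly connected component is a singleton, so this clause is automatically satisfied by any directed edge out of $i_k$, and the two notions of blocking—hence of separation—agree.

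Given these, the argument is one line: if $\dsep{A}{B}{C}{\C{G}(\C{M})}$, then since $\C{G}(\C{M})$ is acyclic we also have $\sigmasep{A}{B}{C}{\C{G}(\C{M})}$, and Theorem~\ref{thm:sigma_separation} applied to the simple SCM $\C{M}$ gives $\indep{\B{X}_A}{\B{X}_B}{\B{X}_C}{\Prb_{\C{M}}(\B{X})}$, which is the claim. So I would present the proof essentially as a corollary.

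If instead one wanted a self-contained classical proof not going through the $\sigma$-separation machinery, the route would be: form the canonical DAG $\tilde{\C{G}}$ on $\C{I}\cup\C{J}$ with an edge $j \to i$ whenever $j\in\C{J}$ is a parent of $i\in\C{I}$ and an edge $i'\to i$ whenever $i'\in\C{I}$ is a parent of $i$ (this is acyclic because $\C{G}(\C{M})$ is); use that $\Prb_{\BC{E}}$ is a product measure and that acyclicity lets the structural equations be solved by recursive substitution along a topological order, to show that the joint law of $(\B{X},\B{E})$ satisfies the local (hence global) Markov property with respect to $\tilde{\C{G}}$; invoke the classical DAG Markov theorem to get $d$-separation $\Rightarrow$ conditional independence in $\tilde{\C{G}}$; and finally check that $\dsep{A}{B}{C}{\C{G}(\C{M})}$ implies $\dsep{A}{B}{C}{\tilde{\C{G}}}$ via the standard latent-projection path correspondence (each length-two segment $i \ot j \to i'$ through a latent $j$ in $\tilde{\C{G}}$ corresponds to a bidirected edge $i \oto i'$ in $\C{G}(\C{M})$, and $d$-connection is transported both ways).

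In neither route is there a genuine conceptual obstacle; the difficulty is bookkeeping. For the first route, the only things to be careful about are that ``acyclic $\Rightarrow$ all SCCs are singletons $\Rightarrow$ the $\sigma$-clause is vacuous'' is stated precisely, and that $\Prb_{\C{M}}(\B{X})$ really is the law of a solution so that Theorem~\ref{thm:sigma_separation} is applicable. For the second route, the delicate points are the measure-theoretic handling of the \emph{deterministic} structural assignments $X_i = f_i(\B{X},\B{E})$ (which forces one to work with Markov kernels/conditional distributions rather than densities) and the faithful translation of $d$-separation statements between $\C{G}(\C{M})$ and its canonical DAG. I would go with the first route, since Theorem~\ref{thm:sigma_separation} already carries all the weight.
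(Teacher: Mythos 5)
Your first route is correct, and it is the natural one given how the paper is organized. The paper itself does not prove Theorem~\ref{thm:d_separation}: it is stated in the preliminaries as a known background result from the literature (alongside Theorem~\ref{thm:sigma_separation}, which is attributed to Forr\'e and Mooij), so there is no internal proof to compare against. Your reduction is exactly the argument the paper implicitly relies on: acyclic SCMs are simple, so Theorem~\ref{thm:sigma_separation} applies and yields the $\sigma$-Markov property for the (unique) observational distribution; and on an acyclic graph every strongly connected component $\sccsub{\C{G}}{i}$ is the singleton $\{i\}$, so the extra clause in the $\sigma$-blocking condition for a non-endpoint non-collider is vacuous, giving $\IM_d(\C{G}(\C{M})) = \IM_\sigma(\C{G}(\C{M}))$ as recorded in the preliminaries. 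One small point worth making explicit if you write this up: the only case where the two blocking conditions could conceivably differ is a non-collider in $C$ that is an endpoint of the walk, and that case is already covered by clause (i) of both definitions, so the equivalence is genuinely complete. Your second, self-contained route via the canonical DAG on $\C{I}\cup\C{J}$ and latent projection is the classical proof and would also work, but it duplicates machinery the paper already imports; going through Theorem~\ref{thm:sigma_separation} is the right choice here.
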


\subsubsection{FAITHFULNESS}

For a simple SCM $\C{M}$ with endogenous index set $\C{I}$ and observational distribution $\Prb_{\C{M}}(\B{X})$, 
we define its independence model to be
\begin{equation*}\begin{split}
  \IM(\C{M}) := \{ \langle A, B, C \rangle :{} &A, B, C \subseteq \C{I}, \\
  & \indep{\B{X}_A}{\B{X}_B}{\B{X}_C}{\Prb_{\C{M}}} \},
\end{split}\end{equation*}
i.e., the set of all (conditional) independences that hold in its (observational) distribution.

The typical starting point for constraint-based approaches to causal discovery from observational data is 
to assume that the data is modelled by an (unknown) SCM $\C{M}$, such that its observational
distribution $\Prb_{\C{M}}(\B{X})$ exists and satisfies a Markov property with respect to its graph
$\C{G}(\C{M})$. In other words, $\IM(\C{M}) \supseteq \IM_d(\C{G}(\C{M}))$ in the acyclic case,
and more generally, $\IM(\C{M}) \supseteq \IM_\sigma(\C{G}(\C{M}))$ for simple SCMs.

In addition, one usually assumes the \emph{faithfulness assumption} to hold \citep{SGS2000,Pearl2009}, 
i.e., that the graph explains \emph{all} conditional independences present in the 
observational distribution. We distinguish the $d$-faithfulness assumption:
$$\IM(\C{M}) \subseteq \IM_d(\C{G}(\C{M}))$$
and the $\sigma$-faithfulness assumption:\footnote{This notion is called \emph{``collapsed graph faithful''} in \citet{RichardsonPhD1996}.}
$$\IM(\C{M}) \subseteq \IM_\sigma(\C{G}(\C{M})).$$
Although for $d$-faithfulness there are some results that this assumption holds \emph{generically}
\citep{Meek1995} for certain parameterizations of acyclic SCMs, no such results are known for $\sigma$-faithfulness, 
although it has been conjectured \citep{Spirtes95} that at least weak completeness results can be shown.

\end{document}